\numberwithin{equation}{section}
\newcommand{\define}{\textbf}
\newcommand{\mc}{\mathcal}
\newcommand{\vp}{\varphi}
\newcommand{\ab}[1]{\langle#1\rangle} 
\newcommand{\floor}[1]{\lfloor#1\rfloor}
\newcommand{\lrb}[1]{\left[#1\right]}
\newcommand{\lrp}[1]{\left(#1\right)}
\newcommand{\lrset}[1]{\left\{#1\right\}}
\newcommand{\norm}[1]{\|#1\|}
\newcommand{\set}[1]{\{#1\}}
\newcommand{\suppressthis}[1]{}
\newcommand{\twopartdef}[4]
{
	\left\{
	\begin{array}{ll}
		#1 & \mbox{if } #2 \\
		#3 & \mbox{if } #4
	\end{array}
	\right.
}
\newcommand{\R}{\mathbf R}
\newcommand{\T}{\mathbb T}
\newcommand{\Z}{\mathbb Z}
\newcommand{\C}{\mathbf C}
\newtheorem{theorem}{Theorem}[section]
\newtheorem{lemma}[theorem]{Lemma}
\newtheorem{corollary}[theorem]{Corollary}
\DeclareMathOperator{\ann}{Ann}
\DeclareMathOperator{\dir}{Dir}
\DeclareMathOperator{\ord}{ord}
\DeclareMathOperator{\supp}{supp}
\newcommand{\one}{\mathbf 1}
\begin{document}

\title{On Configurations of Order 2}
\author{Abhishek Khetan}
\address{School
	of Mathematics, Tata Institute of Fundamental Research. 1, Homi Bhabha Road, Mumbai-400005, India}

\email{khetan@math.tifr.res.in}

\subjclass[2010]{37A15, 37B10, 52C20}

\keywords{subshift, low complexity, configurations, order, tilings, one-periodic, weakly periodic}

\date{\today}

\begin{abstract}
	Let $c:\Z^2\to \set{0, 1}$ be a configuration with a non-trivial annihilator.
	We show that if $c$ is weakly periodic then the directions of periodicity in a minimal weakly periodic decomposition of $c$ can be detected from the annihilator ideal associated to $c$.
	We show that the order of a weakly periodic configuration is same as the number of components in any minimal decomposition into $1$-periodic elements.
	We then give an upper bound on the order in terms of the support of any of its annihilators.
   In the special case of tilings this gives an upper bound on the order of any tiling in terms of a geometric quantity associated to the tile.
   We prove that if $c:\Z^2\to \set{0, 1}$ is a configuration having a non-trivial annihilator and has order $2$ then it can be written as a sum of two $1$-periodic configurations valued in $\set{0, 1}$.
   Lastly we show that any tiling of $\Z^2$ by a tile of cardinality the square of a prime has a point of order at most $2$ in its orbit closure.
\end{abstract}

\maketitle

\tableofcontents

\section{Introduction}

A \emph{configuration} is a function $c:\Z^2\to \C$.
Given a configuration $c$, one way to measure its complexity is by probing $c$ with a finite shape.
More precisely, if $D\subseteq \Z^2$ is a finite set, then $c$ induces a function $D\to \C$ given any translation of $D$.
The set of all such functions obtained this way is written as $\mc P_c(D)$.
The size of $\mc P_c(D)$ is a measure of the `local complexity' of $c$.
We say that $c$ has \emph{low (local) complexity} with respect to $D$ if $|\mc P_c(D)|\leq |D|$.
On the other hand, we say that $c$ is $1$-\emph{periodic} if there is a nonzero vector in $\Z^2$ such that translating $c$ by this vector does not alter $c$.
Thus, if $c$ is $1$-periodic, then, essentially, $c$ is a $1$-dimensional configuration, rather than a genuine $2$-dimensional configuration.
Thus, a $1$-periodic configuration can be thought of as a configuration which has low `global complexity.'
It is natural to expect a relationship between the local and the global notions of complexities.
\emph{Nivat's conjecture} formalizes this by stating that if $c$ has low complexity with respect to the rectangle $D=\set{1, \ldots, m}\times \set{1, \ldots, n}$, then $c$ is $1$-periodic.
The conjecture as stated is still open, however, a lot of progress has been made by various authors (see \cite{cyr_kra}, \cite{kari_szabados_alg_geom}, \cite{kari_mutot}).
An algebraic measure of complexity on a configuration was introduced in \cite{kari_szabados_alg_geom}, where, roughly, it was shown that if a configuration satisfies a certain algebraic condition then there is associated to this configuration a set of directions, and the number of these directions is called the \emph{order} of the configuration.
In \cite{szabados_nivat_conjecture} it was shown that Nivat's conjecture indeed holds if one additionally assumes that the configuration has order at most $2$ (that is, the configuration is algebraically not very complex).
Using this result Kari and Szabados \cite{kari_szabados_alg_geom} recovered the result of \cite{cyr_kra} using different methods.

It is thus natural to ask what can be said about periodicity of configurations which have `low algebraic complexity' without any insistence on low local complexity.
Theorem \ref{theorem:periodicity of binary configuration} we show that if $c$ is a configuration valued in $\set{0, 1}$ and has order at most $2$, then $c$, when thought of as a subset of $\Z^2$, can be partitioned into two $1$-periodic subsets.
We then show that natural generalizations of this fact to configurations valued in $\set{0, 1, 2}$ or of higher order do not hold.
The following is a summary of the results in this paper.

Section \ref{section:preliminaries} concerns with the definitions of the main concepts around which this paper is centered.
We also discuss some results developed by \cite{kari_szabados_alg_geom} which are needed in Section \ref{section:possible directions of one periodicity} and \ref{section:order two configuration}.
In this paper we focus on \emph{binary configurations} which are nothing but subsets of $\Z^2$.
Thus \emph{tilings} (see \S \ref{section:preliminaries} for a definition) are examples of binary configurations.
Following \cite{kari_szabados_alg_geom} one may associate the ideal of annihilators in $\C[x^\pm, y^\pm]$ to any given binary configuration which can be used to deduce structural information about the given configuration if this ideal is non-trivial.
Let $c$ be a binary configuration with a non-trivial annihilator.
By a result of \cite{kari_szabados_alg_geom} it follows that this annihilator ideal is of the form $\ab{\phi_1 \cdots \phi_m}H$ where each $\phi_i$ is a \emph{line polynomial} (See Section \ref{section:preliminaries} for definition) and $H$ is the intersection of finitely many maximal ideals.
To each line polynomial there is associated a unique direction.
We show that if $c$ is $1$-periodic then it has a periodicity vector parallel to the direction of one of the $\phi_i$'s.
In fact, we prove a more general result in Section \ref{section:possible directions of one periodicity} in the form of Lemma \ref{lemma:order and directions of weakly periodicity} where, roughly speaking, we show that the directions of $1$-periodic components in a minimal \emph{weakly periodic}\footnote{A binary configuration is weakly periodic if it can be partitioned into finitely many $1$-periodic subsets.} decomposition of any binary configuration with a non-trivial annihilator are the directions of the line polynomials $\phi_1, \ldots, \phi_m$.
Lemma \ref{lemma:order and directions of weakly periodicity} also shows that the order (See Theorem \ref{theorem:annihilator theorem kari szabdos} for definition) of a weakly periodic point is same as the number of components in any minimal decomposition into $1$-periodic elements.

Further, Lemma \ref{lemma:directions lemma} shows that the size of the \emph{direction set} (See Section \ref{section:possible directions of one periodicity}) of the support of any annihilator is an upper bound on its order. 
Thus the direction set provides structural information, for instance, if the order of a configuration is $1$ then the configuration is $1$-periodic.
If $T\subseteq \Z^2$ is a tiling by translates of a finite set $F$ (called a \emph{cluster}) then Lemma \ref{lemma:order of a tiling} shows that the order of any $F$-tiling is at most the size of the direction set of $F$.
This in particular shows that if the direction set of a cluster $F$ is a singleton then each $F$-tiling is $1$-periodic.

Let $c$ be a binary configuration with a non-trivial annihilator.
Assume the order of $c$ is $2$.
Then Theorem \ref{theorem:periodicity of binary configuration} shows that $c$ is weakly periodic and has two components in any minimal weakly periodic decomposition.
Natural generalizations of this result do not hold, even up to orbit closure, and we give counterexamples in Section \ref{subsection:counterexample for finitary configurations} and \ref{subsection:counterexample for binary configurations}.

Lastly, Theorem \ref{theorem:prime square theorem} shows that if $F$ is a cluster in $\Z^2$ of size $p^2$, where $p$ is a prime, and $T$ is an $F$-tiling of $\Z^2$, then there is a tiling in the orbit closure of $T$ having order at most $2$.
The proof of this result takes the analytic route based on the ergodic theoretic proof of the periodic tiling conjecture by Siddhartha Bhattacharya in \cite{bhattacharya_tilings}.

In Section \ref{subsection:order versus support of spectral measure} we show a connection between the algebraic methods of Kari and Szabados with the analytic methods of Bhattacharya.
In \cite[Section 3]{bhattacharya_tilings} Bhattacharya shows that the support of the spectral measure arising from a binary configuration (with a non-trivial annihilator) is contained in the union of finitely many one dimensional sub-torii of the two dimensional torus.
The number of these subtorii is at most the order of the configuration and, in fact, the directions of these sub-torii is contained in the directions of the line polynomials appearing in the decomposition theorem of Kari and Szabados.
This connection, although not necessary for the proofs of any of the results mentioned above, has independent interest in the opinion of the author, as it sheds light on the underlying similarity of the various techniques developed to attack this circle of problems.

\subsection{Acknowledgements}
The author would like to thank Mahan Mj, Siddhartha Bhattacharya, Ankit Rai, Pierre Guillon, Etienne Mutot, and Nishant Chandgotia for helpful conversations and their encouragement.
Special thanks go to Nishant Chandgotia for a careful reading of various parts of the manuscript and many helpful comments and his guidance.

%
%
%
%

%
\section{Preliminaries}
\label{section:preliminaries}

\subsection{Tilings}

Let $F\subseteq \Z^2$ be finite, which we will refer to as a \define{cluster}.
We define an $F$-\define{tiling} as a subset $T$ of $\Z^2$ such that
\begin{equation}
	\sum_{v\in F} 1_T(p-v) = 1
\end{equation}
for all $p\in \Z^2$.
Thus an $F$-tiling as a partition of $\Z^2$ by translates of $F$.
We say that a cluster $F$ is \define{exact} if there exists an $F$-tiling.

\subsection{Configurations}

A \define{configuration} is a function $c:\Z^2\to \C$.
A configuration taking values only in $\Z$ is called \define{integral} and a configuration taking only finitely many values is called \define{finitary}.
A configuration valued in $\set{0, 1}$ will be referred to as a \define{binary configuration}.
Of course, any subset of $\Z^2$ can be thought of as a binary configuration, and hence tilings, in particular, serve as examples of binary configurations.

We say that $x\in \set{0, 1}^{\Z^2}$ is $1$-\define{periodic} if there is a nonzero vector $v\in \Z^2$ such that $v\cdot x = x$.\footnote{Here $v \cdot x$ denotes the configuration which takes $u$ to $x(u-v)$.}
We say $x$ is \define{biperiodic} if there is a finite index subgroup $\Lambda$ of $\Z^2$ such that $v\cdot x=x$ for all $v\in \Lambda$.
Note that $x$ is biperiodic if and only if there exist two linearly independent vectors $u$ and $v$ in $\Z^2$ such that $u\cdot x = v\cdot x = x$.
A point $x\in \set{0, 1}^{\Z^2}$ is called \define{weakly periodic} if there exist finitely many $1$-periodic points $x_1, \ldots, x_k\in \set{0, 1}^{\Z^2}$ such that $x=x_1+ \cdots + x_k$.
Thus $x$ is weakly periodic if and only if it, when thought of as a subset of $\Z^2$, can be partitioned into finitely many $1$-periodic subsets.

\subsection{Algebraic Concepts}

Let $\C[x^\pm, y^\pm]$ denote the set of all the Laurent polynomials in two variables.
For $v_1, v_2\in \Z$, we will write the monomial $x^{v_1}y^{v_2}$ as $U^v$, where $v=(v_1, v_2)\in \Z^2$.
Thus we may denote $\C[x^\pm, y^\pm]$ as $\C[U^\pm]$.
A typical element of $\C[U^{\pm}]$ is written as $\sum_{v\in \Z^2} a_v U^v$ where only finitely many of the $a_v$'s are non-zero.
On the other hand, we write $\C[[U^\pm]]$ to denote the set of all formal sums $\sum_{v} a_v U^v$ where $a_v\in \C$ are arbitrary.
An element of $\C[[U^\pm]]$ will be referred to as a \define{Laurent series}.
Any configuration can be naturally thought of as a Laurent series.
Even though one cannot multiply two Laurent series, it is still meaningful to multiply a Laurent polynomial with a Laurent series.
Henceforth, we will simply use the word `polynomial' to mean a Laurent polynomial. 

We say that a Laurent polynomial $f$ \define{annihilates} a configuration $c$ if $f\cdot c = 0$.
Thus a configuration $c$ is $1$-periodic if and only if there is a nonzero vector $v\in \Z^2$ such that $U^v-1$ annihilates $c$ and biperiodic if and only if there are two linearly independent vectors $v$ and $w$ such that $U^v-1$ and $U^w -1$ both annihilate $c$.
The set of all the Laurent polynomials which annihilate a configuration $c$ forms an ideal in the ring $\C[U^\pm]$.
%
The following is a somewhat technical result which we use to establish Lemma \ref{lemma:directions lemma}.
\begin{theorem}
	\label{theorem:more annihilators from a given annihilator}
	\emph{\cite[Theorem 12]{kari_szabados_alg_geom}}
	Let $c$ be a finitary integral configuration and let $f=\sum_{v} a_v U^v$ be a non-trivial integer polynomial in $\ann(c)$.
	Then there is a positive integer $n_0$ depending only on $f$ such that for any $v_0\in \supp(f)$ we have
	\begin{equation}
		\prod_{v\in \supp(f), v\neq v_0} (U^{n_0(v-v_0)} - 1)
	\end{equation}
	is also in $\ann(c)$.
\end{theorem}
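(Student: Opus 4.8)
The plan is to exploit the only substantive hypotheses — that $c$ is integral and takes finitely many values — by reducing modulo a prime and letting the Frobenius endomorphism produce new annihilators. First I would make two harmless reductions. Since multiplying $f$ by the monomial $U^{-v_0}$ leaves $\ann(c)$ unchanged, we may assume $v_0=0\in\supp f$ and write $f=a_0+\sum_{i=1}^k a_iU^{w_i}$ with the $a_i$ nonzero integers and the $w_i$ distinct nonzero vectors; the goal (uniformly in the choice of $v_0$) becomes $\prod_{i=1}^k(U^{n_0w_i}-1)\in\ann(c)$ for a suitable $n_0$. We may assume $c\neq 0$ and fix $B$ with $|c(u)|\le B$ for all $u$, which is finite since $c$ is finitary.

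Now fix a prime $p$ dividing none of the $a_i$ and reduce mod $p$, obtaining $\bar c\colon\Z^2\to\mathbb{F}_p$ with $\bar f\cdot\bar c=0$ and $\supp\bar f=\supp f$. Over $\mathbb{F}_p$ one has $(\sum_v b_vU^v)^p=\sum_v b_v^pU^{pv}=\sum_v b_vU^{pv}$, so $\ann(\bar c)$ is stable under $U\mapsto U^p$; hence $\bar f(U^{p^j})=\bar f(U)^{p^j}$ annihilates $\bar c$ for every $j\ge 0$. These all have constant term $a_0$, so their differences kill it: writing $t_i:=U^{(p-1)w_i}$ and $e_j:=(p^j-1)/(p-1)=1+p+\cdots+p^{j-1}$, the polynomial $\bar f(U^{p^j})-\bar f(U)=\sum_{i=1}^k a_iU^{w_i}(t_i^{e_j}-1)$ annihilates $\bar c$ and is divisible by the binomials $t_i-1$. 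Reading the $k$ relations coming from $j=1,\dots,k$ as a $k\times k$ linear system with unknowns $\gamma_i:=a_iU^{w_i}$ (which are units) and coefficient matrix $[\,t_i^{e_j}-1\,]_{j,i}$, Cramer's rule puts $\det[\,t_i^{e_j}-1\,]\cdot\gamma_i$, and hence $\det[\,t_i^{e_j}-1\,]$ itself, into $\ann(\bar c)$; factoring $(t_i-1)$ out of the $i$th column shows $\bigl(\prod_{i=1}^k(t_i-1)\bigr)\cdot D\in\ann(\bar c)$, where $D=\det[\,1+t_i+\cdots+t_i^{e_j-1}\,]_{j,i}$.

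The hard part is to remove the parasitic cofactor $D$ and arrive at the clean assertion $\prod_{i=1}^k(t_i-1)=\prod_{i=1}^k(U^{(p-1)w_i}-1)\in\ann(\bar c)$. The cofactor $D$ is not a unit — it vanishes when two of the $t_i$ coincide, so it drags along factors $U^{(p-1)(w_i-w_j)}-1$ and more — and it carries degree growing with $p$, which would ruin the descent below if left in. So one must instead organise the elimination carefully — choosing which Frobenius powers $\bar f(U^{p^j})$ to combine, and tracking supports at every step, or, equivalently, running an induction on $|\supp f|$ that peels off one support point at a time — so that exactly the wanted product of $k$ binomials survives. This bookkeeping is the technical core of the argument and is the step I expect to be the real obstacle.

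Granting that, carry it out for a single prime $p$ chosen large enough that it divides no $a_i$, that $p>2^{k}B$, and that the finitely many analogous conditions attached to the other choices of $v_0$ also hold. Set $n_0:=p-1$ and $d:=\prod_{i=1}^k(U^{n_0w_i}-1)\cdot c$. Then $d$ is integer-valued; since each operator $U^{t}-1$ at most doubles the supremum norm, $|d(u)|\le 2^{k}B<p$ for all $u$; and $d\equiv 0\pmod p$ because $\prod_{i=1}^k(U^{(p-1)w_i}-1)\in\ann(\bar c)$. Hence $d=0$, i.e.\ $\prod_{i=1}^k(U^{n_0w_i}-1)\in\ann(c)$. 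The same $p$, and thus the same $n_0$, works for every $v_0\in\supp f$; using that $U^{aw}-1$ divides $U^{bw}-1$ when $a\mid b$, one may also replace $n_0$ by any common multiple if distinct $v_0$'s were treated separately, yielding a single $n_0$ depending only on $f$.
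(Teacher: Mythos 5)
The paper does not actually prove this statement; it is imported verbatim from Kari--Szabados, so there is no internal proof to compare against, and your proposal must stand on its own. It does not: you flag the decisive step yourself ("the hard part is to remove the parasitic cofactor $D$\dots the step I expect to be the real obstacle"), and "granting that" concedes essentially the whole theorem. Worse, the gap cannot be closed within the framework you set up. You run all of the algebra inside $\mathbb{F}_p[U^{\pm}]$, where, as you yourself note, $\bar f(U^{p^j})=\bar f(U)^{p^j}$. Consequently every polynomial you ever produce --- the Frobenius images, their pairwise differences, and any adjugate/Cramer combination of these --- lies in the principal ideal $(\bar f)$ of $\mathbb{F}_p[U^{\pm}]$. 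But the target $\prod_{i}(U^{(p-1)w_i}-1)$ does not lie in $(\bar f)$ in general once $|\supp f|\ge 3$: for $f=1+x+y$ one would need $(x^{p-1}-1)(y^{p-1}-1)$ to vanish on the curve $1+x+y=0$ over $\overline{\mathbb{F}_p}$, which fails at any point of that curve with both coordinates outside $\mathbb{F}_p$. So no amount of bookkeeping with the relations you have written down can yield the clean product modulo $p$; the theorem is a statement about finitary configurations, not a formal consequence of $\bar f\in\ann(\bar c)$, and your mod-$p$ stage has used nothing about $\bar c$ beyond $\bar f\in\ann(\bar c)$.

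The repair is to invoke the boundedness of $c$ much earlier, not only in the last paragraph. Since $f(U^{p^j})c\equiv f(U)^{p^j}c=0\pmod p$ while $\lVert f(U^{p^j})c\rVert_\infty\le\lVert f\rVert_1\lVert c\rVert_\infty$, any prime $p>\lVert f\rVert_1\lVert c\rVert_\infty$ already forces $f(U^{p^j})\in\ann(c)$ \emph{exactly}, over $\mathbb{Z}$. These are genuinely new annihilators (over $\mathbb{C}$, $f(U^{p^j})$ is not a multiple of $f$), and the elimination must then be carried out on them inside $\ann(c)$ itself, with further appeals to finitariness --- for instance the radicality statement of Lemma \ref{lemma:strong radicality for line polynomials} --- to strip away extraneous factors such as your cofactor $D$ and the unit factors $a_iU^{w_i}$. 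This interleaving of "reduce mod $p$, lift by boundedness, repeat" is the actual engine of the Kari--Szabados argument; your proposal contains each ingredient but deploys them in an order that cannot reach the conclusion.
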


A \define{line polynomial} is a Laurent polynomial of the form
\begin{equation}
	\sum_{i=-\infty}^\infty a_i U^{iv}
\end{equation}
where $v$ is a nonzero vector (and only finitely many of the $a_i$'s are nonzero).
The following theorem is a key result about finitary integral configurations and makes apparent the power of the algebraic methods of Kari and Szabados in studying structural properties of configurations.

\begin{theorem}
	\label{theorem:annihilator theorem kari szabdos}
	\emph{\cite[Theorem 24, Corollary 28]{kari_szabados_alg_geom}}
	Let $c$ be a finitary integral configuration with a non-trivial annihilator.
	Then there are line polynomials $\phi_1 , \ldots, \phi_m$ and an ideal $H$ in $\C[U^\pm]$ such that
	\begin{enumerate}[a)]
		\item Each $\phi_i$ is a line polynomial with the property that there is a primitive vector $v_i$ in $\Z^2$ such that $\phi_i$ is the product of polynomials of the form $U^{v_i}-\omega$, where $\omega$ is a root of unity.
		\item $H$ is an intersection of finitely many maximal ideals in $\C[U^\pm]$.
		\item $\ann(c)=\ab{\phi_1 \cdots \phi_m}H$.
		\item The directions of $\phi_i$'s are pairwise distinct.
		\item The $\phi_i$'s and $H$ are unique up to multiplication by invertible elements and rearrangements.
		\item The ideals $\ab{\phi_1 \cdots\phi_m}$ and $H$ are comaximal.
	\end{enumerate}
\end{theorem}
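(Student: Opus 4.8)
The plan is to reconstruct the argument of Kari and Szabados. Work in the ring $R=\C[U^\pm]$, which is a Noetherian regular UFD of Krull dimension $2$; its nonzero primes are the principal primes $\ab{\psi}$ with $\psi$ irreducible, together with the maximal ideals, each of the form $\ab{x-\alpha,y-\beta}$ with $\alpha,\beta\in\C\setminus\{0\}$. By hypothesis $I:=\ann(c)$ is a nonzero ideal of $R$. The first step is to exhibit inside $I$ a product of irreducible line polynomials of the type in (a). Because $c$ is integral, the condition that a polynomial $\sum a_vU^v$ (with support inside a fixed finite set) annihilate $c$ is a homogeneous linear system in the $a_v$ whose coefficients are values of $c$, hence integers; so $I$ meets $\Q[U^\pm]$, and after clearing denominators $I$ contains a nonzero $f$ with integer coefficients. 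Applying Theorem \ref{theorem:more annihilators from a given annihilator} to $f$ and a fixed $v_0\in\supp(f)$ places $\prod_{v\in\supp(f),\,v\neq v_0}(U^{n_0(v-v_0)}-1)$ in $I$; writing $n_0(v-v_0)=dw$ with $w$ primitive and factoring $U^{dw}-1=\prod_{\zeta^d=1}(U^w-\zeta)$, and noting that $U^w-\zeta$ is irreducible when $w$ is primitive (a monomial change of variables by an element of $\mathrm{GL}_2(\Z)$ makes $U^w$ one of the variables), we conclude that $I$ contains a finite product $g$ of irreducible line polynomials, each of the form $U^w-\zeta$ with $w$ primitive and $\zeta$ a root of unity.

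Consequently $V(I)\subseteq V(g)$ lies in a finite union of irreducible curves, so by Noetherianity $V(I)$ has finitely many irreducible components, each being one of those curves $V(U^{v}-\omega)$ or a point. I would then single out the line polynomials by grouping the height-one minimal primes of $I$ by direction: for each direction $v_i$ occurring, set $\phi_i:=\prod(U^{v_i}-\omega)$, the product over the roots of unity $\omega$ with $\ab{U^{v_i}-\omega}$ a minimal prime of $I$. Distinct $\phi_i$ then have distinct directions (this gives (a) and (d)), and by construction the irreducible factors of $\phi_1\cdots\phi_m$ are pairwise non-associate, so $\ab{\phi_1\cdots\phi_m}$ is radical (squarefree) and equals the intersection of the corresponding principal primes. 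Let $\mathfrak m_1,\dots,\mathfrak m_t$ be the remaining (maximal) minimal primes of $I$ and put $H:=\mathfrak m_1\cap\dots\cap\mathfrak m_t$, an intersection of finitely many maximal ideals, giving (b). The whole theorem now reduces to the single assertion that \emph{$I$ is a radical ideal}. Granting that, $I=\bigcap_{\mathfrak p\in\mathrm{Min}(I)}\mathfrak p=\ab{\phi_1\cdots\phi_m}\cap H$, which is (c) in intersection form; since minimal primes of an ideal are pairwise incomparable, no $\mathfrak m_j$ can contain a factor $U^{v_i}-\omega$ of $\phi_1\cdots\phi_m$, so (by maximality of $\mathfrak m_j$ and primality) $\ab{\phi_1\cdots\phi_m}+H=R$, giving (f) and upgrading the intersection to the product $\ab{\phi_1\cdots\phi_m}H$; and uniqueness (e) is just uniqueness of the minimal primes of $I$ and of their grouping by direction.

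It remains to prove that $I=\ann(c)$ is radical, and this is where finitariness is essential. The key elementary fact is: if $d$ is a bounded configuration and $(U^v-\omega)^kd=0$ with $v$ primitive and $\omega$ a root of unity, then already $(U^v-\omega)d=0$. Indeed, after a $\mathrm{GL}_2(\Z)$-change of coordinates we may take $v=e_1$; along each line in direction $e_1$ the function $d$ then satisfies a constant-coefficient recurrence with characteristic polynomial $(t-\omega^{\pm1})^k$, so its restriction to that line has the form $n\mapsto\omega^{\pm n}p(n)$ with $\deg p<k$, and boundedness forces $p$ constant, whence $(U^v-\omega)d=0$. The same argument run in both coordinate directions handles a power of a maximal ideal: for $\mathfrak m=\ab{x-\alpha,y-\beta}$ an associated prime of $I$, writing $\mathfrak m=(I:h)=\ann(hc)$ shows $hc$ is a nonzero finitary pure exponential, forcing $\alpha,\beta$ to be roots of unity, and then a bounded $d$ with $\mathfrak m^k d=0$ satisfies $\mathfrak m d=0$. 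Feeding these facts into a primary decomposition of $I$ — applied to the configurations $hc$ attached to each associated prime — collapses every primary component to its radical and rules out embedded components, i.e. $I=\sqrt I$ (in particular $g$ may be taken squarefree, by the same multiplicity-stripping applied directly to the line-polynomial annihilator). I expect this last step to be the main obstacle: transporting the principle ``along a line, bounded exponential-times-polynomial solutions are constant-times-exponential'' through the primary decomposition, with the bookkeeping for the cofactors $h$ and the reductions to a coordinate direction, is the technical heart; the surrounding commutative algebra in the two-dimensional UFD $R$ is routine.
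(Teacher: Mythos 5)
First, a point of comparison: the paper does not prove Theorem \ref{theorem:annihilator theorem kari szabdos} at all --- it is imported verbatim from \cite[Theorem 24, Corollary 28]{kari_szabados_alg_geom} --- so there is no in-paper argument to measure yours against; what follows assesses your reconstruction on its own terms. Your architecture is essentially the right one: producing an integer-coefficient annihilator, invoking Theorem \ref{theorem:more annihilators from a given annihilator} to place a product of irreducibles $U^w-\zeta$ (with $w$ primitive and $\zeta$ a root of unity) inside $\ann(c)$, reading off the minimal primes, and reducing the entire statement to the single claim that $\ann(c)$ is a radical ideal. The commutative algebra surrounding that reduction (squarefree principal ideals are radical in a UFD, incomparability of minimal primes gives comaximality, uniqueness of minimal primes gives (e)) is fine, and your proof of the line-polynomial stripping fact --- bounded solutions of a recurrence with characteristic polynomial $(t-\omega)^k$, $|\omega|=1$, are constant multiples of $\omega^{\pm n}$ --- is correct and is essentially Lemma \ref{lemma:strong radicality for line polynomials}; via localization at a height-one minimal prime it does show the one-dimensional primary components of $\ann(c)$ are reduced.

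The gap is the zero-dimensional half of the radicality claim, which you compress into ``feeding these facts into a primary decomposition \dots collapses every primary component to its radical and rules out embedded components.'' For a maximal ideal $\mathfrak m$ that is a \emph{minimal} prime of $I=\ann(c)$ your plan can be made to work: choose $h$ lying in every other primary component but outside $\mathfrak m$, so that $\ann(hc)$ is exactly the $\mathfrak m$-primary component, and the bounded-pure-exponential argument collapses it to $\mathfrak m$. But for a putative \emph{embedded} maximal prime $\mathfrak m\supseteq\ab{U^{v_i}-\omega}\supseteq I$ this device fails structurally: any $h$ that lies in the primary component attached to $\ab{U^{v_i}-\omega}$ automatically lies in $\mathfrak m$, so the $\mathfrak m$-primary component cannot be realized as $\ann(hc)$ for a single cofactor $h$, and your ``nonzero finitary pure exponential'' argument never gets started. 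Ruling out embedded components --- equivalently, proving $\sqrt{I}\subseteq I$, e.g.\ that $f^2c=0$ forces $fc=0$ for an \emph{arbitrary} polynomial $f$, not merely a line polynomial or a power of a maximal ideal --- is precisely the content of Theorem 24 of Kari--Szabados, and their proof needs genuinely more input (the analysis of the periodizer ideal and the decomposition of $c$ into periodic pieces) than the one-variable recurrence lemma plus routine localization. You yourself flag this step as ``the main obstacle,'' and indeed it is: as written, the proposal correctly identifies the target statement but does not prove it.
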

The `$m$' appearing in the above description is called the \define{order} of $c$.

\begin{lemma}
	\label{lemma:strong radicality for line polynomials}
	\emph{\cite[Lemma 10]{kari_szabados_alg_geom}}
	Let $c$ be a finitary configuration and let $f_1, \ldots, f_k$ be line polynomials such that $f_1^{m_1} \cdots f_k^{m_k}$ annihilates $c$.
	Then $f_1 \cdots f_k$ also annihilates $c$.
\end{lemma}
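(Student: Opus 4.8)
The plan is to strip the lemma down, by two elementary reductions, to a statement about one‑variable finitary sequences, where all of the content lies. The one fact used repeatedly is that finitariness is preserved by the action of polynomials: if $f=\sum_v a_v U^v$ and $c$ takes values in a finite set $S\subseteq\C$, then $f\cdot c$ takes values in the finite set $\set{\sum_v a_v s_v : s_v\in S}$. Granting this, I would first reduce to the case of a single line polynomial with exponent two, i.e. to the assertion: if $b$ is a finitary configuration, $\phi$ is a line polynomial, and $\phi^2\cdot b=0$, then $\phi\cdot b=0$. Indeed, given this, argue by induction on $m_1+\cdots+m_k$: if every $m_i=1$ there is nothing to prove, and otherwise, after relabeling, $m_1\ge 2$, so $b:=f_1^{m_1-2}f_2^{m_2}\cdots f_k^{m_k}\cdot c$ is finitary and satisfies $f_1^2\cdot b=0$, hence $f_1\cdot b=0$; this says $f_1^{m_1-1}f_2^{m_2}\cdots f_k^{m_k}\cdot c=0$, and since the exponent sum has dropped the induction hypothesis applies.

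Next I would reduce this to dimension one. Let $v$ be the primitive vector in the direction of $\phi$, so $\phi=\sum_i a_iU^{iv}$, and decompose $\Z^2$ into the cosets of $\Z v$; since $v$ is primitive each coset is a copy of $\Z$, the operator $\phi$ acts inside each coset as the one‑variable operator $\tilde\phi(t)=\sum_i a_i t^i$, and the restriction of a finitary configuration to a coset is a finitary sequence $\Z\to\C$. Thus $\phi^2\cdot b=0$ holds iff $\tilde\phi(t)^2$ annihilates the restriction of $b$ to every coset, and similarly for $\phi\cdot b=0$. Writing $\tilde\phi(t)=t^{i_0}g(t)$ with $g\in\C[t]$, $g(0)\ne 0$, and noting $t^{i_0}$ is a unit, we land on the one‑dimensional claim: if $d\colon\Z\to\C$ is finitary, $g\in\C[t]$ with $g(0)\ne0$, and $g^2\cdot d=0$, then $g\cdot d=0$.

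To prove this I may assume $\deg g\ge 1$. Put $N=2\deg g$; then $g^2\cdot d=0$ is a linear recurrence of order $N$ whose two extreme coefficients, $g(0)^2$ and the square of the leading coefficient of $g$, are nonzero, so it determines $d_{n+N}$ from $(d_n,\dots,d_{n+N-1})$ and also $d_n$ from $(d_{n+1},\dots,d_{n+N})$. Since $d$ is finitary the state vectors $(d_n,\dots,d_{n+N-1})$ range over a finite set $\Sigma$, the forward shift is a well‑defined injection of $\Sigma$ into itself, hence a bijection of finite order $p\ge1$, and therefore $d_{n+p}=d_n$ for all $n$; that is, $(t^p-1)\cdot d=0$. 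Now $d$ is killed by both $t^p-1$ and $g^2$, hence by a generator of the ideal $(t^p-1)+(g^2)=(\gcd(t^p-1,g^2))$ of $\C[t]$; and since $t^p-1$ is squarefree one has $\gcd(t^p-1,g^2)=\gcd(t^p-1,g)$, which divides $g$. Hence $g\cdot d=0$, completing the argument.

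The only step with real content is the one‑dimensional claim, and within it the observation that a finitary sequence obeying a linear recurrence is genuinely periodic rather than merely bounded; once periodicity is available, the squarefreeness of $t^p-1$ removes the spurious multiplicity for free. Everything else is bookkeeping, the one thing to watch being that each intermediate configuration stays finitary, which is exactly the opening observation. I expect the periodicity step to be the part a reader would want spelled out carefully, since it is what lets us avoid any analytic estimate on the growth of solutions of the recurrence.
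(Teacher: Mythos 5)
Your proof is correct, but note that the paper does not actually prove this lemma: it is imported verbatim as \cite[Lemma 10]{kari_szabados_alg_geom}, so there is no in-paper argument to compare against. What you have written is a complete, self-contained proof of the cited result, and it follows the natural route: (i) the observation that the action of a Laurent polynomial preserves finitariness, which legitimizes the induction on $m_1+\cdots+m_k$ reducing everything to the statement $\phi^2\cdot b=0\Rightarrow\phi\cdot b=0$ for a single line polynomial; (ii) the slicing of $\Z^2$ into cosets of $\Z v$, which turns a line polynomial into a one-variable operator; and (iii) the core fact that a finitary bi-infinite sequence satisfying a linear recurrence whose extreme coefficients are nonzero is genuinely periodic, after which the squarefreeness of $t^p-1$ kills the extra multiplicity via $\gcd(t^p-1,g^2)=\gcd(t^p-1,g)$. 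All three steps check out: the forward state map is well defined and injective because both $g(0)^2$ and the leading coefficient of $g^2$ are nonzero, an injection of the finite set of occurring states is a bijection of finite order, and the annihilator of $d$ is an ideal of $\C[t^{\pm}]$ containing $\gcd(t^p-1,g^2)$, hence containing $g$. This is essentially the same mechanism as in the Kari--Szabados source (finitary solutions of one-dimensional recurrences are periodic), so if you intend this as a replacement for the citation it can stand as is; the only cosmetic points worth spelling out are the degenerate cases where some $\tilde\phi$ is a monomial (so $g$ is a nonzero constant and $b$ restricted to that coset is forced to vanish) and the one-line verification that $(fg)\cdot c=f\cdot(g\cdot c)$, which your induction and your ideal-theoretic step both use silently.
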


The following is a fundamental structural result about configurations.
\begin{theorem}
	\label{theorem:kari szabados decomposition theorem}
	\cite[Corollary 28]{kari_szabados_alg_geom}
	Let $c$ be a finitary integral configuration with a non-trivial annihilator.
	If $c$ has order $m$, then there are $1$-periodic configurations $c_1, \ldots, c_m:\Z^2\to \C$ and a biperiodic configuration $c':\Z^2\to \C$ such that
	\begin{equation}
		c = c_1 + \cdots + c_m + c'
	\end{equation}
\end{theorem}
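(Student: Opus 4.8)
The plan is to derive the statement from the annihilator structure theorem (Theorem \ref{theorem:annihilator theorem kari szabdos}) together with an argument that ``solves a difference equation''. Write $\ann(c)=\langle\phi_1\cdots\phi_m\rangle H$ as there. By comaximality of $\langle\phi_1\cdots\phi_m\rangle$ and $H$, pick $a\in\langle\phi_1\cdots\phi_m\rangle$ and $b\in H$ with $a+b=1$, and set $c':=a\cdot c$ and $d:=b\cdot c$, so $c=c'+d$ and both are finitary. For $h\in H$ one has $ha\in\langle\phi_1\cdots\phi_m\rangle H=\ann(c)$, so $H$ annihilates $c'$; and $(\phi_1\cdots\phi_m)b\in\langle\phi_1\cdots\phi_m\rangle H=\ann(c)$, so $\phi_1\cdots\phi_m$ annihilates $d$.

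I would first dispose of $c'$. Since $H=\mathfrak m_1\cap\cdots\cap\mathfrak m_r$ is an intersection of maximal ideals, each of the form $\langle x-\alpha_k,\,y-\beta_k\rangle$ with $(\alpha_k,\beta_k)\in(\C^*)^2$, the Chinese Remainder Theorem gives $c'=\sum_k c'_k$ with $\mathfrak m_k$ annihilating $c'_k$ (and each $c'_k$ finitary, being a polynomial multiple of $c$). Then $x\cdot c'_k=\alpha_k c'_k$ forces $\alpha_k$ to be a root of unity unless $c'_k=0$ (otherwise $c'_k$ would be unbounded along the first coordinate direction), and similarly for $\beta_k$; hence each $c'_k$, and therefore $c'$, is biperiodic.

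The heart of the matter is that $d$ is a sum of $m$ one-periodic configurations. By Lemma \ref{lemma:strong radicality for line polynomials}, applicable because $d$ is finitary, $d$ is annihilated by the product of the \emph{distinct} line-polynomial factors of $\phi_1\cdots\phi_m$; since each such factor has a root-of-unity root, multiplying that annihilator by a suitable Laurent polynomial shows $d$ is annihilated by $\prod_{i=1}^m(U^{N_iv_i}-1)$ for some positive integers $N_i$, where $v_1,\dots,v_m$ are the directions of the $\phi_i$ --- pairwise non-parallel primitive vectors. I would then prove, by induction on $m$ (the case $m=0$ being trivial), that \emph{any} configuration $d:\Z^2\to\C$ annihilated by such a product is a sum $c_1+\cdots+c_m$ with $c_i$ being $N_iv_i$-periodic. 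Put $e:=\prod_{i=2}^m(U^{N_iv_i}-1)\cdot d$; then $(U^{N_1v_1}-1)\cdot e=0$, so $e$ lies in the space $\mathcal V$ of $N_1v_1$-periodic configurations. The key step is that $P:=\prod_{i=2}^m(U^{N_iv_i}-1)$ maps $\mathcal V$ onto itself: extending $v_1$ to a basis $(v_1,w)$ of $\Z^2$ identifies $\mathcal V$ with the functions on $\Z/N_1\Z\times\Z$; diagonalizing the operator $U^{v_1}$ (which has order dividing $N_1$ on $\mathcal V$) splits $\mathcal V$ into eigenspaces $\mathcal V_\zeta\cong\C^\Z$, on which $U^w$ acts as the shift $S$; and because each $v_i$ with $i\geq 2$ is not parallel to $v_1$, on $\mathcal V_\zeta$ the factor $U^{N_iv_i}-1$ becomes a nonzero two-term Laurent polynomial in $S$, so $P$ becomes a nonzero element of $\C[S^\pm]$, which is surjective on $\C^\Z$ because a linear recurrence over $\C$ always has a solution. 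Choosing $c_1\in\mathcal V$ with $P\cdot c_1=e$, the configuration $d-c_1$ is annihilated by $\prod_{i=2}^m(U^{N_iv_i}-1)$, so the induction hypothesis yields $d-c_1=c_2+\cdots+c_m$; hence $d=c_1+\cdots+c_m$ and $c=c_1+\cdots+c_m+c'$.

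I expect the surjectivity step in the induction --- being able to ``integrate'' a periodic configuration against a line-polynomial operator in a transverse direction --- to be the main obstacle, and it is exactly there that the hypothesis of pairwise distinct directions is used (it makes the Fourier components of $P$ nonzero). The reduction to factors of the form $U^{N_iv_i}-1$ (equivalently, the appeal to strong radicality via finitariness) is also genuinely needed, not cosmetic: for a non-squarefree factor such as $(U^v-1)^2$ the analogous statement is false, as witnessed by a configuration that is affine-linear along the $v$-direction.
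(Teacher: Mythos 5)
The paper does not actually prove this statement --- it is imported verbatim from Kari--Szabados \cite[Corollary 28]{kari_szabados_alg_geom} --- so there is no in-paper argument to compare against; your reconstruction has to stand on its own, and it does. The overall strategy (split $c$ using comaximality of $\langle\phi_1\cdots\phi_m\rangle$ and $H$, dispose of the $H$-part via CRT, reduce the line-polynomial part to an annihilator $\prod_i(U^{N_iv_i}-1)$ via Lemma \ref{lemma:strong radicality for line polynomials}, then peel off one direction at a time using surjectivity of the transverse operator on the space of $N_1v_1$-periodic configurations) is essentially the route of the original source, and every step checks out: $H$ kills $a\cdot c$ and $\phi_1\cdots\phi_m$ kills $b\cdot c$ by the ideal identity; the eigenspace decomposition of $\mathcal V$ under $U^{v_1}$ is legitimate because $U^{v_1}$ satisfies $T^{N_1}-1=0$ there; on each eigenspace $U^{N_iv_i}$ becomes $\zeta^{a_i}S^{b_i}$ with $b_i\neq 0$ precisely because the directions are pairwise non-parallel, and a nonzero element of $\C[S^\pm]$ is indeed surjective on $\C^{\Z}$ by solving the linear recurrence in both directions. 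One small point to tighten: when you argue that each $\alpha_k$ (and $\beta_k$) must be a root of unity, the parenthetical reason ``otherwise $c'_k$ would be unbounded'' only rules out $|\alpha_k|\neq 1$; for $|\alpha_k|=1$ irrational the configuration stays bounded but the orbit $\{\alpha_k^i\,c'_k(w)\}_{i\in\Z}$ is infinite, so it is the finitariness of $c'_k$ (which you correctly noted, being a Laurent polynomial applied to a finitary configuration) rather than boundedness that forces a root of unity. With that wording fixed, the proof is complete and self-contained.
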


\subsection{Spectral Theorem}
\label{section:spectral theorem}

Let $\T^2$ be the two dimensional torus.
Let $\nu$ be a probability measure on $\T^2$.
There is a canonical representation of $\Z^2$ on $L^2(\T^2, \nu)$ which we now describe.
Recall that the characters on $\T^2$ are in bijection with $\Z^2$.
Let us write $\chi_g:\T^2\to \C$ to denote the character corresponding to $g\in \Z^2$.
Then we define a map $\sigma_g:L^2(\T^2, \nu)\to L^2(\T^2, \nu)$ by writing $\sigma_g(\phi) = \chi_g\phi$, where the latter is the pointwise product of $\chi_g$ and $\phi$.
It can be easily checked that each $\sigma_g$ is in fact a unitary linear map.
Thus we get a map $\sigma:\Z^2\to \mc U(L^2(\T^2, \nu))$ which takes $g$ to $\sigma_g$.

By the Stone-Weierstrass theorem we have the $\C$-span of the characters are dense in $C(\T^2)$, where $C(\T^2)$ is the set of all the complex valued continuous functions on $\T^2$ equipped with the sup-norm topology.
Also, since $\T^2$ is a compact metric space, we have $C(\T^2)$ is dense in $L^2(\T^2, \nu)$.
Therefore the $\C$-span of the characters are dense in $L^2(\T^2, \nu)$.
From this we see that, if $\one$ denotes the constant map which takes the value $1$ everywhere, $\text{Span}\set{\sigma_g\one:\ g\in \Z^2}$ is dense in $L^2(\T^2, \nu)$.
In other words, $\one$ is a \emph{cyclic vector} for this representation.
We now want to state a theorem which dictates that this is a defining property of unitary representations of $\Z^2$.

\begin{theorem}
	\label{theorem:spectral theorem}
	\textit{\textbf{Spectral Theorem.}}
	Let $H$ be a Hilbert space and $\tau:\Z^2\to \mc U(H)$ be a unitary representation of $\Z^2$.
	Suppose $v\in H$ is a cyclic vector, that is, $\text{Span}\set{\tau_gv:\ g\in \Z^2}$ is dense in $H$, and assume that $v$ has unit norm.
	Then there is a unique probability measure $\nu$ on $\T^2$ and a unitary isomorphism $\theta: H \to L^2(\T^2, \nu)$ with $\theta(v)= \one$ such that the following diagram commutes for all $g\in \Z^2$
	\begin{figure}[H]
		\centering
		\begin{tikzcd}
			H\ar[r, "\theta"]\ar[d, "\tau_g"] &	L^2(\T^2, \nu)\ar[d, "\sigma_g"]\\
			H\ar[r, "\theta"]\ar[r, "\theta"] & L^2(\T^2, \nu)
		\end{tikzcd}
	\end{figure}
	\noindent
\end{theorem}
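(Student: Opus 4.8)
The plan is to invoke Bochner's theorem for the group $\Z^2$ (equivalently, the Herglotz--Bochner representation of positive-definite sequences on $\Z^2$ by finite positive measures on the dual group $\T^2$). Define $\varphi:\Z^2\to\C$ by $\varphi(g)=\ab{\tau_g v, v}$. Since each $\tau_h$ is unitary, $\ab{\tau_g v,\tau_h v}=\ab{\tau_{g-h}v,v}=\varphi(g-h)$, so for any finitely supported $a:\Z^2\to\C$ we get $\sum_{g,h}a_g\overline{a_h}\varphi(g-h)=\norm{\sum_g a_g\tau_g v}^2\ge 0$; thus $\varphi$ is positive-definite. Bochner's theorem then yields a unique finite positive measure $\nu$ on $\T^2$ with $\varphi(g)=\int_{\T^2}\chi_g\,d\nu$ for all $g\in\Z^2$, and since $\varphi(0)=\norm{v}^2=1$ we have $\nu(\T^2)=1$, so $\nu$ is a probability measure. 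This is the measure in the statement; its uniqueness will be recovered at the end.

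Next I would build $\theta$. On the dense subspace $H_0=\text{Span}\set{\tau_g v:\ g\in\Z^2}$ set $\theta\big(\sum_g a_g\tau_g v\big)=\sum_g a_g\chi_g$. Comparing inner products shows this is well-defined and isometric: on one side $\ab{\sum_g a_g\tau_g v,\ \sum_h b_h\tau_h v}=\sum_{g,h}a_g\overline{b_h}\,\varphi(g-h)$, and on the other $\ab{\sum_g a_g\chi_g,\ \sum_h b_h\chi_h}_{L^2(\nu)}=\sum_{g,h}a_g\overline{b_h}\int_{\T^2}\chi_{g-h}\,d\nu$, and these agree by the defining property of $\nu$. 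Hence $\theta$ extends uniquely to a linear isometry $\theta:H\to L^2(\T^2,\nu)$, as $H_0$ is dense in $H$. Its image is closed and contains every character $\chi_g=\sigma_g\one$, and the $\C$-span of the characters is dense in $L^2(\T^2,\nu)$ by Stone--Weierstrass together with the density of $C(\T^2)$ in $L^2(\T^2,\nu)$ (exactly as in the paragraph preceding the theorem); so $\theta$ is onto, hence a unitary isomorphism. Also $\theta(v)=\theta(\tau_0 v)=\chi_0=\one$.

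It then remains to check the intertwining relation and uniqueness of $\nu$. For $g,h\in\Z^2$ we have $\theta(\tau_g(\tau_h v))=\theta(\tau_{g+h}v)=\chi_{g+h}=\chi_g\chi_h=\sigma_g(\chi_h)=\sigma_g(\theta(\tau_h v))$, so $\theta\circ\tau_g=\sigma_g\circ\theta$ on $H_0$, and by continuity on all of $H$; this is precisely the commutativity of the diagram. For uniqueness, suppose $(\nu',\theta')$ is another such pair. Then $\int_{\T^2}\chi_g\,d\nu'=\ab{\sigma_g\one,\one}_{L^2(\nu')}=\ab{\sigma_g\theta'(v),\theta'(v)}=\ab{\theta'(\tau_g v),\theta'(v)}=\ab{\tau_g v,v}=\varphi(g)$, so $\nu'$ has the same Fourier coefficients as $\nu$, whence $\nu'=\nu$ by Fourier uniqueness on $\T^2$.

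The main obstacle is the appeal to Bochner's theorem on $\Z^2$; all the rest is bookkeeping with inner products. If one prefers a self-contained argument, an alternative is to apply the spectral theorem for a single unitary operator to the commuting unitaries $\tau_{(1,0)}$ and $\tau_{(0,1)}$ to obtain a joint projection-valued measure $E$ on $\T^2$ and then set $\nu(A)=\ab{E(A)v,v}$; the positive-definiteness computation above is then replaced by the identity $\varphi(g)=\int_{\T^2}\chi_g\,d\ab{E(\cdot)v,v}$, and the rest of the argument is unchanged.
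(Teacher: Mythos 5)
Your proof is correct. Note that the paper does not prove this statement at all --- it is quoted as the classical spectral theorem for a unitary $\Z^2$-representation with a cyclic vector --- so there is no in-paper argument to compare against; what you have written is the standard proof of exactly this statement, and every step checks out. Concretely: the positive-definiteness computation $\sum_{g,h}a_g\overline{a_h}\varphi(g-h)=\lrnorm{\sum_g a_g\tau_g v}^2\geq 0$ is right (using $\tau_h^{*}=\tau_{-h}$), Herglotz--Bochner on the discrete group $\Z^2$ gives the finite measure with $\varphi(g)=\int_{\T^2}\chi_g\,d\nu$ and $\nu(\T^2)=\varphi(0)=1$, the map $\sum a_g\tau_g v\mapsto\sum a_g\chi_g$ is well defined and isometric precisely because the two Gram matrices coincide, surjectivity follows from the density of the span of characters in $L^2(\T^2,\nu)$ (the very argument in the paragraph preceding the theorem), the intertwining identity $\sigma_g\circ\theta=\theta\circ\tau_g$ holds on the cyclic span and extends by continuity, and uniqueness of $\nu$ follows since any admissible pair $(\nu',\theta')$ forces $\int_{\T^2}\chi_g\,d\nu'=\ab{\tau_g v,v}$, so the two measures have the same character integrals and hence coincide. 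The only external inputs are Bochner's theorem for $\Z^2$ (or, in your alternative, the joint projection-valued measure of the commuting unitaries $\tau_{(1,0)}$ and $\tau_{(0,1)}$), which is a citation at the same level of generality as the theorem itself. One cosmetic caution: whichever sign convention you adopt in Bochner's theorem ($\varphi(g)=\int\chi_g\,d\nu$ versus $\int\chi_{-g}\,d\nu$) must match the one used in the Gram-matrix comparison and in the uniqueness computation; you use a single convention consistently, so this is fine.
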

So the above theorem says that the abstract representation $\tau$ can be thought of as the canonical concrete representation $\sigma$, at the cost of a mysterious probability measure $\nu$.
Thus understanding the measure $\nu$ is equivalent to understanding $\tau$.

\section{Possible Directions of Weak Periodicity}
\label{section:possible directions of one periodicity}

\begin{lemma}
	\label{lemma:order and directions of weakly periodicity}
	Let $c:\Z^2\to \set{0, 1}$ have a non-trivial annihilator.
	Assume that $c$ is weakly periodic but not biperiodic.
	Let $k$ be the smallest positive integer such that there exist $1$-periodic binary configurations $c_1, \ldots, c_k$ such that $c=c_1 + \cdots + c_k$.
	Then $k=\ord(c)$.
	Further, each $c_i$ is periodic in the direction of some $\phi_j$.
\end{lemma}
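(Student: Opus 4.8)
The plan is to leverage the Kari–Szabados decomposition theorem (Theorem \ref{theorem:kari szabados decomposition theorem}) together with uniqueness properties from Theorem \ref{theorem:annihilator theorem kari szabdos}, and to argue both inequalities $k \le \ord(c)$ and $k \ge \ord(c)$ separately.

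\emph{The inequality $k \le \ord(c)$.} Let $m = \ord(c)$. By Theorem \ref{theorem:kari szabados decomposition theorem} we may write $c = c_1 + \cdots + c_m + c'$ with each $c_i$ a $1$-periodic configuration and $c'$ biperiodic, each valued in $\C$ (not necessarily in $\set{0,1}$). Since $c$ is weakly periodic, by hypothesis there already exists \emph{some} decomposition into $1$-periodic \emph{binary} configurations; the point of this half is to bound the minimal number $k$. First I would argue that the biperiodic term $c'$ can be absorbed: a biperiodic configuration is in particular $1$-periodic, so it can either be merged into one of the $c_i$'s (if $m \ge 1$, which holds since $c$ is not biperiodic, forcing $m\ge 1$; in fact $m\ge 2$) or, more carefully, one shows that the binary-valuedness of the sum plus the algebraic rigidity forces the combined decomposition to have at most $m$ genuinely distinct periodicity directions. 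The cleanest route: take the given weakly periodic binary decomposition $c = b_1 + \cdots + b_\ell$ (some $\ell$), group the $b_j$'s by periodicity direction, and show that after grouping, the number of distinct directions is exactly $\ord(c)$ — this is where the uniqueness clause (e) of Theorem \ref{theorem:annihilator theorem kari szabdos} enters. Indeed, if $b_j$ has periodicity vector parallel to a primitive vector $w_j$, then $U^{w_j}-1$ annihilates $b_j$, so $\prod_{\text{distinct directions } w}(U^{w}-1)$ annihilates $c$; comparing this annihilator with the canonical form $\ab{\phi_1\cdots\phi_m}H$ and invoking Lemma \ref{lemma:strong radicality for line polynomials} to strip multiplicities shows the distinct directions among the $w_j$'s must be among the directions of $\phi_1,\dots,\phi_m$, giving $k \le $ (number of distinct directions) $\le m$. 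This simultaneously proves the final sentence of the lemma, that each $c_i$ is periodic in the direction of some $\phi_j$.

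\emph{The inequality $k \ge \ord(c)$.} Conversely, suppose $c = c_1 + \cdots + c_k$ with each $c_i$ $1$-periodic binary and $k$ minimal. Let $w_i$ be a primitive periodicity vector of $c_i$. By minimality the directions of the $w_i$ are pairwise distinct (otherwise two components sharing a direction could be... — here one must be slightly careful, since the sum of two binary configurations with the same periodicity direction need not be binary; but one can still combine them into a single $1$-periodic \emph{integral} configuration and then note that for the purpose of computing the annihilator ideal this does not matter, or alternatively argue directly that merging is possible after adjusting by a biperiodic correction). Then $f := \prod_{i=1}^k (U^{w_i}-1)$ annihilates $c$, and each factor is a line polynomial in a distinct direction. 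I would then show $f \in \ab{\phi_1\cdots\phi_m}H$ forces $k \ge m$: since $H$ is comaximal with $\ab{\phi_1\cdots\phi_m}$ (clause (f)) and is an intersection of maximal ideals, and since $f$ factors completely into line polynomials, the structure theorem forces each $\phi_j$-direction to appear among the $w_i$-directions, whence $m \le k$. Combined with the first half, $k = m = \ord(c)$.

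\emph{Main obstacle.} The delicate point is handling the biperiodic component and the non-closure of binary configurations under addition: when I want to "merge" two $1$-periodic binary pieces sharing a direction, or to "absorb" the biperiodic term $c'$ from Theorem \ref{theorem:kari szabados decomposition theorem}, the result need not be binary, so I cannot naively reduce the count within the class of binary decompositions. The resolution is to do all the counting at the level of \emph{annihilator ideals and periodicity directions} rather than at the level of the configurations themselves — the number $\ord(c)$ is intrinsically the number of distinct line-polynomial directions in the canonical decomposition, and both a weakly periodic binary decomposition and the Kari–Szabados decomposition give rise to annihilators that are products of line polynomials, so matching directions via the uniqueness clauses (d), (e), (f) of Theorem \ref{theorem:annihilator theorem kari szabdos} and the radicality Lemma \ref{lemma:strong radicality for line polynomials} is what makes the argument go through. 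I expect the bookkeeping to reduce cleanly once one commits to tracking only directions.
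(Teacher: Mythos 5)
Your second half (the inequality $k \ge \ord(c)$) is essentially sound and coincides with the paper's opening step: since the binary summands are nonnegative and add up to a $\set{0,1}$-valued configuration they have disjoint supports, so same-direction components can be merged within the binary class, minimality makes the periodicity vectors $g_1,\ldots,g_k$ pairwise linearly independent, and then $\prod_{i=1}^k(U^{g_i}-1)\in\ann(c)=\ab{\phi_1\cdots\phi_m}H$ forces every $\phi_j$-direction to occur among the $g_i$-directions, whence $m\le k$.

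The gap is in your argument for $k\le\ord(c)$. From $f:=\prod_{w}(U^{w}-1)\in\ab{\phi_1\cdots\phi_m}H$ the divisibility runs only one way: $\phi_1\cdots\phi_m$ divides $f$, so each $\phi_j$-direction appears among the $w$-directions; it does \emph{not} follow that each $w$-direction appears among the $\phi_j$-directions, since an annihilator of $c$ may contain line-polynomial factors in directions having nothing to do with the canonical decomposition (multiply any annihilator by $U^{w}-1$ for arbitrary $w$). So "comparing with the canonical form and stripping multiplicities via Lemma \ref{lemma:strong radicality for line polynomials}" only re-proves $m\le k$ and gives neither $k\le m$ nor the final sentence of the lemma. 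Your alternative route, absorbing the biperiodic term of Theorem \ref{theorem:kari szabados decomposition theorem}, produces a decomposition of $c$ into $m$ one-periodic \emph{complex-valued} configurations, which places no bound on $k$, because $k$ is by definition the minimal number of \emph{binary} one-periodic summands. What is missing is an argument that a minimal binary decomposition cannot contain a component periodic in a ``rogue'' direction, and this is exactly where the paper works hardest: assuming $k>m$, it matches $g_1,\ldots,g_m$ to the $\phi$-directions, sets $c'=c_{m+1}+\cdots+c_k$, writes $H$ via maximal ideals $\ab{x-\alpha_i,y-\beta_i}$, and notes that both $(x-\alpha_1)\cdots(x-\alpha_l)(U^{g_1}-1)\cdots(U^{g_m}-1)$ and $(y-\beta_1)\cdots(y-\beta_l)(U^{g_1}-1)\cdots(U^{g_m}-1)$ annihilate $c'$; an irreducible factor of a line polynomial of $\ann(c')$ cannot divide both the $x$-part and the $y$-part, so every line-polynomial direction of $c'$ lies among $g_1,\ldots,g_m$, while $(U^{g_{m+1}}-1)\cdots(U^{g_k}-1)\in\ann(c')$ forces it to lie among $g_{m+1},\ldots,g_k$ as well (minimality guaranteeing $c'$ is not biperiodic, so such a direction exists) --- a contradiction with pairwise independence. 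Your proposal contains no substitute for this step, so as written it only establishes $k\ge\ord(c)$.
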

\begin{proof}
	By Theorem \ref{theorem:annihilator theorem kari szabdos} we know that $\ann(c) = \ab{\phi_1 \cdots \phi_m}H$ where each $\phi_i$ is a line polynomials in pairwise distinct directions and $H$ is the product of finitely maximal ideals.
	Let $g_1, \ldots, g_k$ be nonzero vectors in $\Z^2$ such that $g_i\cdot c_i=c_i$ for $i=1, \ldots, k$.
The minimality of $k$ implies that the $g_i$'s are pairwise linearly independent.
By Theorem \ref{theorem:annihilator theorem kari szabdos} we know that each $\phi_i$ is a product of polynomials of the form $U^{v_i} - \omega_{ij}$ where $v_i$ is a primitive vector in $\Z^2$, $\omega_{ij}$ is a root of unity.
The $v_i$'s are pairwise linearly independent.
Also, the $\phi_i$'s and $H$ are uniquely determined.

Now since $g_i$ fixes $c_i$, we have $f(U) := \prod_{i=1}^k (U^{g_i} - 1)\in \ann(c)$.
Thus for any $i\in \set{1, \ldots, m}$ we have a polynomial of the form $U^{v_i} - \omega_{ij}$ divides $f(U)$, since the irreducible factors of $\phi_i$ are of this form.
This shows that each $v_i$ is in the direction of some $g_j$ and therefore $k\geq m$.
This also shows that each $\phi_i$ divides some power of some $U^{g_j}-1$.
By relabeling the $g_j$'s if needed, we may assume that $\phi_i$ divides some power of $U^{g_i}-1$ for $i=1, \ldots, k$.

Assume that $k>m$.
We will produce a contradiction.
Say $H= \prod_{i=1}^l \ab{x-\alpha_i, y-\beta_i}$ for some complex numbers $\alpha_1, \ldots, \alpha_l, \beta, \ldots, \beta_l$.
Then
\begin{equation}
	\phi_1 \cdots \phi_m (x-\alpha_1) \cdots (x-\alpha_l) \in \ann(c)
\end{equation}
Thus
\begin{equation}
	[(x-\alpha_1) \cdots (x-\alpha_l) (U^{g_1} - 1) \cdots (U^{g_m} - 1)] (c_1 + \cdots + c_k) = 0
\end{equation}
where we have appealed to Lemma \ref{lemma:strong radicality for line polynomials} to get rid of powers of $U^{g_i}-1$.
This gives
\begin{equation}
	[(x-\alpha_1) \cdots (x-\alpha_l) (U^{g_1} - 1) \cdots (U^{g_m} - 1)] (c_{m+1} + \cdots + c_k) = 0
\end{equation}
So the configuration $c':= c_{m+1} + \cdots +c_k$ is finitary integral and has a non-trivial annihilator.
Using Theorem \ref{theorem:annihilator theorem kari szabdos} we have $\ann(c') = \ab{\psi_1 \cdots \psi_t} H'$ where $\psi_i$'s are line polynomials in pairwise different directions and $H'$ is the intersection of finitely many maximal ideals.
Note that the minimality of $k$ implies that $c'$ is not biperiodic.

In the reasoning above we may replace $(x-\alpha_1) \cdots (x-\alpha_l)$ with $(y-\beta_1) \cdots (y-\beta_l)$ to get 
\begin{equation}
	(x-\alpha_1) \cdots (x-\alpha_l) (U^{g_1} - 1) \cdots (U^{g_m}-1) \text{ and } (y-\beta_1) \cdots (y-\beta_l) (U^{g_1} - 1) \cdots (U^{g_m} - 1)
\end{equation}
are in $\ann(c')$.
Fix $j\in \set{1, \ldots, t}$.
Let $w_j$ be a primitive vector such that $U^{w_j} -\eta$ is an irreducible factor of $\psi_j$, where $\eta$ is a root of unity.
Now we have
\begin{equation}
	U^{w_j} - \eta \text{ divides } (x-\alpha_1)  \cdots (x-\alpha_l)(U^{g_1} - 1) \cdots (U^{g_m} - 1)
\end{equation}
and thus
\begin{equation}
	U^{w_j} - \eta \text{ divides } (x-\alpha_1)  \cdots (x-\alpha_l)\quad  \text{ or }\quad  U^{w_j} - \eta \text{ divides } (U^{g_1} - 1) \cdots (U^{g_m} - 1)
\end{equation}
Similarly
\begin{equation}
	U^{w_j} - \eta \text{ divides } (y-\beta_1)  \cdots (y-\beta_l)\quad  \text{ or }\quad  U^{w_j} - \eta \text{ divides } (U^{g_1} - 1) \cdots (U^{g_m} - 1)
\end{equation}
This implies that
\begin{equation}
	U^{w_j} - \eta \text{ divides } (U^{g_1} - 1) \cdots (U^{g_m} - 1)
\end{equation}
Therefore $w_j$ is in the direction of one of $g_1, \ldots, g_m$.
But $w_j$ is also in the direction of one of $g_{m+1} , \ldots, g_k$ since $(U^{g_{m+1}} - 1) \cdots (U^{g_k} - 1)$ also annihilates $c'$.
This gives a contradiction.
Thus we cannot have $k>m$ and we conclude that $k=m$.
\end{proof}

Let $S$ be a finite subset of $\R^2$. 
For distinct points $p$ and $q$ in $S$ define
\begin{equation}
	\ell_{p, q}=\set{t(p-q):\ t\in \R}
\end{equation}
So $\ell_{p, q}$ is the line passing through $p$ and $q$ translated to make it pass through the origin.
For a point $v\in S$ define $\dir_S(v)$ as the set $\set{\ell_{v, p}:\ p\in S, p\neq v}$.
Finally define $\dir(S)$ as $\bigcap_{v\in S} \dir_S(v)$, and we will refer to this as the \define{direction set} of $S$.
In other words, a line $\ell$ is in the direction set of $S$ if and only if any line parallel to $\ell$ is either disjoint with $S$ or intersects $S$ in at least two points.

\begin{lemma}
	\label{lemma:directions lemma}
	Let $c$ is be a binary configuration with a non-trivial annihilator.
	Then $\ord(c)\leq |\bigcap_{f\in \ann(c)}\dir(\supp(f))|$.
\end{lemma}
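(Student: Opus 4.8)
The goal is to show $\ord(c) \le |\bigcap_{f \in \ann(c)} \dir(\supp(f))|$. The plan is to produce, for each line polynomial $\phi_i$ appearing in the Kari--Szabados decomposition $\ann(c) = \ab{\phi_1 \cdots \phi_m} H$ (Theorem \ref{theorem:annihilator theorem kari szabdos}), a distinct line in $\bigcap_{f \in \ann(c)} \dir(\supp(f))$, namely the line spanned by the primitive vector $v_i$ associated to $\phi_i$. Since the $\phi_i$ have pairwise distinct directions, this gives $m$ distinct lines inside the intersection, which is exactly the claimed inequality. So the whole task reduces to one statement: for every $f \in \ann(c)$ and every $i \in \{1,\ldots,m\}$, the line $\R v_i$ lies in $\dir(\supp(f))$; equivalently, every line parallel to $v_i$ meets $\supp(f)$ in $0$ or at least $2$ points.

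First I would fix $f \in \ann(c)$ and $i$. By reducing to the integral case (a binary configuration is integral, and annihilators can be taken integral after clearing denominators — or one invokes that $\ann(c)$ already contains an integer polynomial, indeed $\phi_1\cdots\phi_m$ times a suitable element of $H$), apply Theorem \ref{theorem:more annihilators from a given annihilator}: there is $n_0$ depending only on $f$ so that for every $v_0 \in \supp(f)$, the product $g_{v_0} := \prod_{v \in \supp(f),\, v \ne v_0}(U^{n_0(v - v_0)} - 1)$ lies in $\ann(c)$. Now suppose for contradiction that some line $L$ parallel to $v_i$ meets $\supp(f)$ in exactly one point $v_0$. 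Then for every other $v \in \supp(f)$, the vector $v - v_0$ is \emph{not} parallel to $v_i$, so $U^{n_0(v-v_0)} - 1$ is a line polynomial whose direction is transverse to $v_i$. Consequently no factor $U^{v_i} - \omega$ (with $\omega$ a root of unity) can divide $U^{n_0(v-v_0)} - 1$, hence none divides the product $g_{v_0}$. But $g_{v_0} \in \ann(c) \subseteq \ab{\phi_1 \cdots \phi_m} H \subseteq \ab{\phi_i}$, so $\phi_i \mid g_{v_0}$, and since $\phi_i$ has an irreducible factor of the form $U^{v_i} - \omega$, that factor must divide $g_{v_0}$ — contradiction. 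Therefore every line parallel to $v_i$ meets $\supp(f)$ in $0$ or $\ge 2$ points, i.e. $\R v_i \in \dir(\supp(f))$.

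Having done this for every $f$, we conclude $\R v_i \in \bigcap_{f \in \ann(c)} \dir(\supp(f))$ for each $i$, and the $m$ lines $\R v_1, \ldots, \R v_m$ are pairwise distinct by Theorem \ref{theorem:annihilator theorem kari szabdos}(d). Hence $\ord(c) = m \le |\bigcap_{f \in \ann(c)}\dir(\supp(f))|$.

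The main obstacle, and the step needing the most care, is the divisibility bookkeeping in a Laurent polynomial ring in two variables: one must argue cleanly that if a primitive vector $w$ is \emph{not} parallel to $v_i$ then $U^{v_i} - \omega$ does not divide $U^{w} - 1$ (more generally does not divide any product of such binomials whose exponent vectors are all non-parallel to $v_i$). This is essentially the fact that the zero set of $U^{v_i} - \omega$ is a one-dimensional subtorus in the $v_i$-direction while that of $U^w - 1$ is a subtorus in a transverse direction, so a common factor would force a common one-dimensional component — impossible. I would phrase this via unique factorization in $\C[U^\pm]$ together with the observation that $U^{v_i} - \omega$ is irreducible (as $v_i$ is primitive) and its direction is an invariant of the ideal it generates. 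A secondary point to handle carefully is the passage to an integer annihilator so that Theorem \ref{theorem:more annihilators from a given annihilator} applies; this is routine since $c$ is binary hence integral, but it should be stated. The degenerate case $\supp(f)$ empty or a single point (so no condition to check, or the intersection is all lines) should be noted in passing.
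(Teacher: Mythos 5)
Your proposal is correct and is essentially the paper's own proof: both hinge on applying Theorem \ref{theorem:more annihilators from a given annihilator} at a point $v_0\in\supp(f)$, on the fact that every element of $\ann(c)=\ab{\phi_1\cdots\phi_m}H$ is divisible by each $\phi_i$, and on irreducibility of $U^{v_i}-\omega$ (with $v_i$ primitive) in the UFD $\C[U^\pm]$ to force some $v-v_0$ parallel to $v_i$; you merely run the argument by contradiction at a single offending line instead of directly for every $v_0$. The integrality caveat you flag is real but is equally elided by the paper, and while your suggested fixes only cover rational $f$ (clearing denominators does nothing for genuinely complex coefficients, and replacing $f$ by another integer annihilator loses control of $\supp(f)$), it is easily repaired by expanding the coefficients of $f$ over a $\Q$-basis of their $\Q$-span to obtain rational annihilators $f_j$ with $\bigcup_j\supp(f_j)=\supp(f)$ and using $\bigcap_j\dir(\supp(f_j))\subseteq\dir\bigl(\bigcup_j\supp(f_j)\bigr)$, so this does not count against your approach.
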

\begin{proof}
	Let $\ann(c) = \ab{\phi_1 \cdots \phi_m}H$ by using Theorem \ref{theorem:annihilator theorem kari szabdos}.
	Let $f\in \ann(c)$ be arbitrary and $S=\supp(f)$.
	By Theorem \ref{theorem:more annihilators from a given annihilator} and from the description of the $\phi_i$'s we see that once $v_0$ in $S$ is fixed, then each irreducible factor of $\phi_i$ divides some $U^{n_0(v-v_0)}-1$.
	From here it is easy to see that the directions appearing in the $\phi_i$'s are all members of $\dir_S(v_0)$.
	But since this is true for each $v_0$ in $S$, we have the lemma.
\end{proof}

Now suppose $F\subseteq\Z^2$ is a cluster and $c$ is an $F$-tiling of $\Z^2$.
Define the Laurent polynomial
\begin{equation}
	f(U)=\sum_{a\in F} U^{-a}
\end{equation}
Then we have $fc = k 1_{\Z^2}$, and hence for any $v\in \Z^2$ we have  $f_v: = f-U^vf$ annihilates $c$.
Note that $\bigcap_{v\in \Z^2} \dir(\supp(f_v)) \subseteq \dir(F)$.
This is because of the following reason.
If $\ell$ is a line not in $\dir(F)$, then $F$ and $v+F$ are disjoint, and the direction of $v$ is different from $\ell$.
Now if $v$ is large enough, then the direction set of $F\cup(v-F)$, which is  the support of $f-U^v f$, cannot have $\ell$ in it.
Thus by Lemma \ref{lemma:directions lemma} we have the following.

\begin{lemma}
	\label{lemma:order of a tiling}
	Let $F$ be a cluster and $c$ be an $F$-tiling of $\Z^2$.
	Then $\ord(c)\leq |\dir(F)|$.\footnote{This result applies to \emph{higher level tilings} introduced in \cite{greenfeld_tao_2020} as well.}
\end{lemma}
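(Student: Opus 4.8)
The plan is to apply Lemma~\ref{lemma:directions lemma} to a convenient subfamily of $\ann(c)$ coming from the tiling polynomial introduced just above. Set $f(U)=\sum_{a\in F}U^{-a}$; the tiling identity gives $fc=k\,1_{\Z^2}$ for a nonzero integer $k$, so, since $U^v 1_{\Z^2}=1_{\Z^2}$ for every $v\in\Z^2$, the polynomial $f_v:=(1-U^v)f$ annihilates $c$. For $v\neq 0$ it is non-trivial, because $\supp(f)=-F$ is finite and nonempty and hence not invariant under the shift $g(U)\mapsto U^v g(U)$; in particular $c$ has a non-trivial annihilator and Lemma~\ref{lemma:directions lemma} applies. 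Since $\{f_v:v\in\Z^2\}\subseteq\ann(c)$, restricting the intersection to this subfamily only enlarges the intersection, so
\[
\ord(c)\ \leq\ \Bigl|\bigcap_{g\in\ann(c)}\dir(\supp(g))\Bigr|\ \leq\ \Bigl|\bigcap_{v\in\Z^2}\dir(\supp(f_v))\Bigr| .
\]
It therefore suffices to prove the set inclusion $\bigcap_{v\in\Z^2}\dir(\supp(f_v))\subseteq\dir(F)$, and this is where the geometry of $F$ enters.

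I would prove the inclusion by contraposition: given a line $\ell\notin\dir(F)$, produce a single $v$ with $\ell\notin\dir(\supp(f_v))$. By the definition of $\dir(F)$ there is a point $p_0\in F$ whose line $L:=p_0+\R\ell$ meets $F$ only at $p_0$. Fix $w\in\Z^2$ not parallel to $\ell$ and put $v=Nw$; I claim that for all large $N$ this $v$ works. First, for $N$ large one has $v\notin F-F$, so $-F$ and $v-F$ are disjoint, no cancellation occurs in $f_v=f-U^vf$, and $\supp(f_v)=(-F)\cup(v-F)=-\bigl(F\cup(F-v)\bigr)$; since direction sets are invariant under translation and under negation, $\dir(\supp(f_v))=\dir\bigl(F\cup(F-v)\bigr)$. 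Second, there are only finitely many lines parallel to $\ell$ meeting $F$, and the line $p_0+\R w$ crosses each of them at most once (it is not parallel to $\ell$); hence for $N$ large the point $p_0+v$ lies on none of them, i.e.\ $L+v$ is disjoint from $F$. Consequently $L\cap(F-v)=\bigl((L+v)\cap F\bigr)-v=\emptyset$, so $L\cap\bigl(F\cup(F-v)\bigr)=\{p_0\}$: the line $L$ is parallel to $\ell$ and meets $F\cup(F-v)$ in exactly one point, so $\ell\notin\dir\bigl(F\cup(F-v)\bigr)=\dir(\supp(f_v))$. This establishes the inclusion, and combined with the displayed chain of inequalities it gives $\ord(c)\leq|\dir(F)|$.

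The only delicate point is the joint choice of $v$: it must be large enough to keep $-F$ and $v-F$ disjoint (so that $\supp(f_v)$ is computed without cancellation) and simultaneously placed so that the single witnessing line for $\ell\notin\dir(F)$ remains a single witnessing line for $\ell\notin\dir(F\cup(F-v))$. Both requirements are met by any generic large $v=Nw$ with $w$ not parallel to $\ell$, but the argument should spell this out cleanly; everything else is bookkeeping. The same reasoning applies verbatim to the higher level tilings of \cite{greenfeld_tao_2020}, where $fc$ is again a nonzero constant configuration.
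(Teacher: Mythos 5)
Your proof is correct and follows essentially the same route as the paper: the paper also forms $f_v = f - U^v f \in \ann(c)$, observes $\bigcap_{v}\dir(\supp(f_v))\subseteq \dir(F)$ for $v$ large in a direction not parallel to a witnessing line, and invokes Lemma~\ref{lemma:directions lemma}. The only difference is that you spell out the ``for $v$ large enough'' step (disjointness of supports plus the line $L+v$ missing $F$) in full, where the paper leaves it as a sketch.
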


Therefore, if $F$ is an exact cluster with $|\dir(F)|\leq 1$, then we know that all $F$-tilings are $1$-periodic.
As a simple application, we see that all the tilings of the clusters shown in Figure \ref{figure:ell shaped cluster} and Figure \ref{figure:plus shaped cluster} are biperiodic since their direction sets are empty.

\begin{figure}[ht]
	\begin{minipage}[b]{0.45\linewidth}
	\centering
	\includegraphics[scale=0.2]{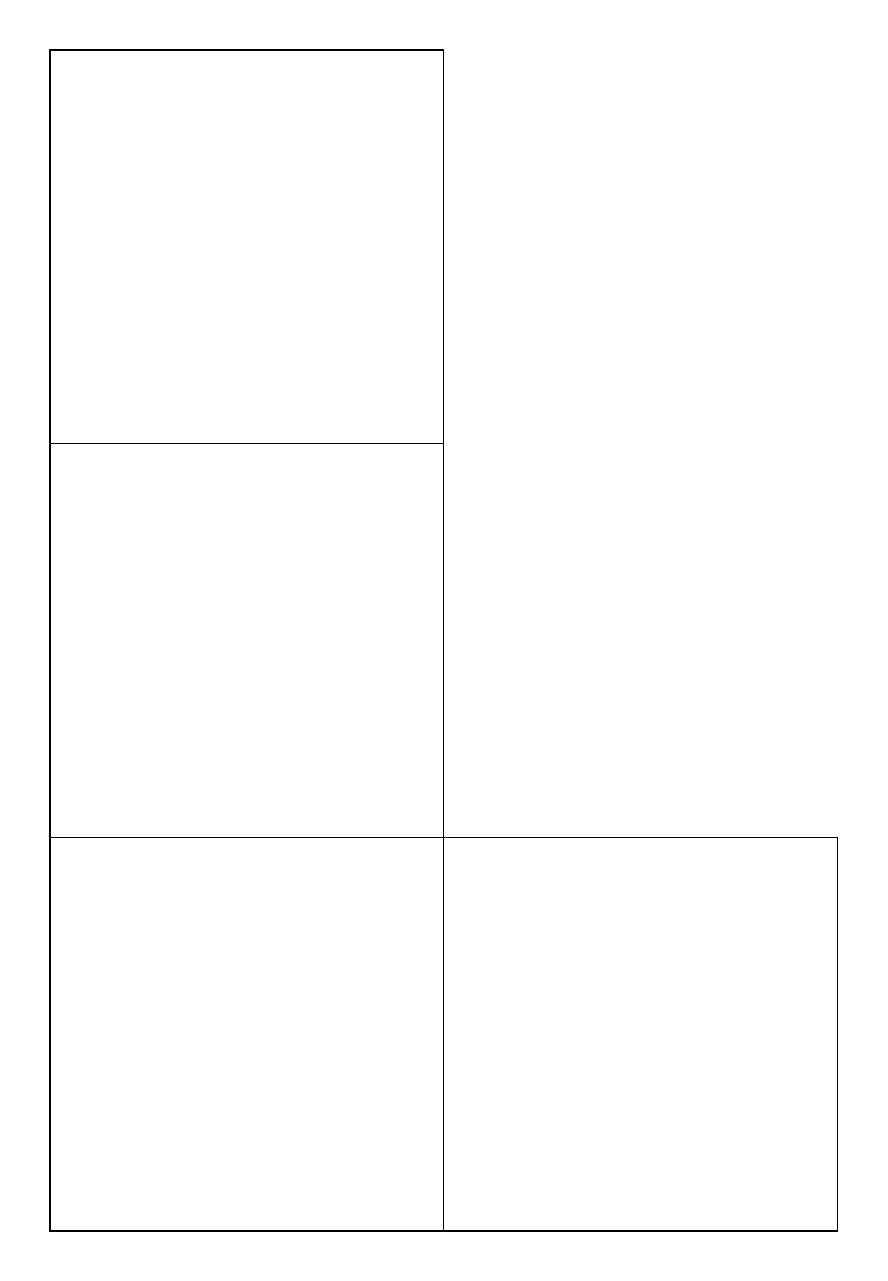}
	\caption{$L$-shaped cluster.}
	\label{figure:ell shaped cluster}
	\end{minipage}
	\hspace{0.5cm}
	\begin{minipage}[b]{0.45\linewidth}
	\centering
	\includegraphics[scale=0.2]{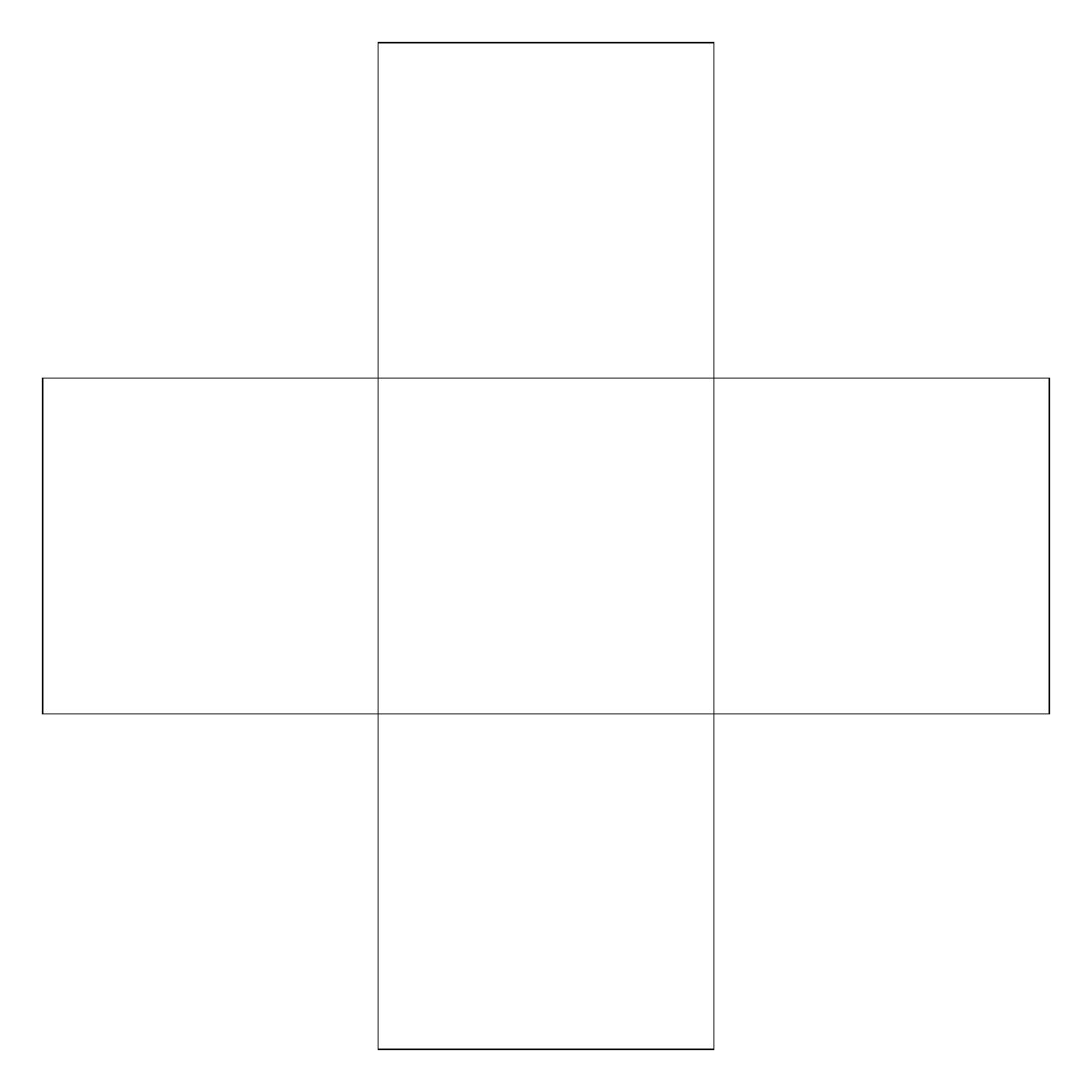}
	\caption{Plus-shaped cluster.}
	\label{figure:plus shaped cluster}
	\end{minipage}
\end{figure}
An example of an exact cluster whose direction set has cardinality 4 is the following.
\begin{figure}[H]
	\centering
	\includegraphics[scale=0.2]{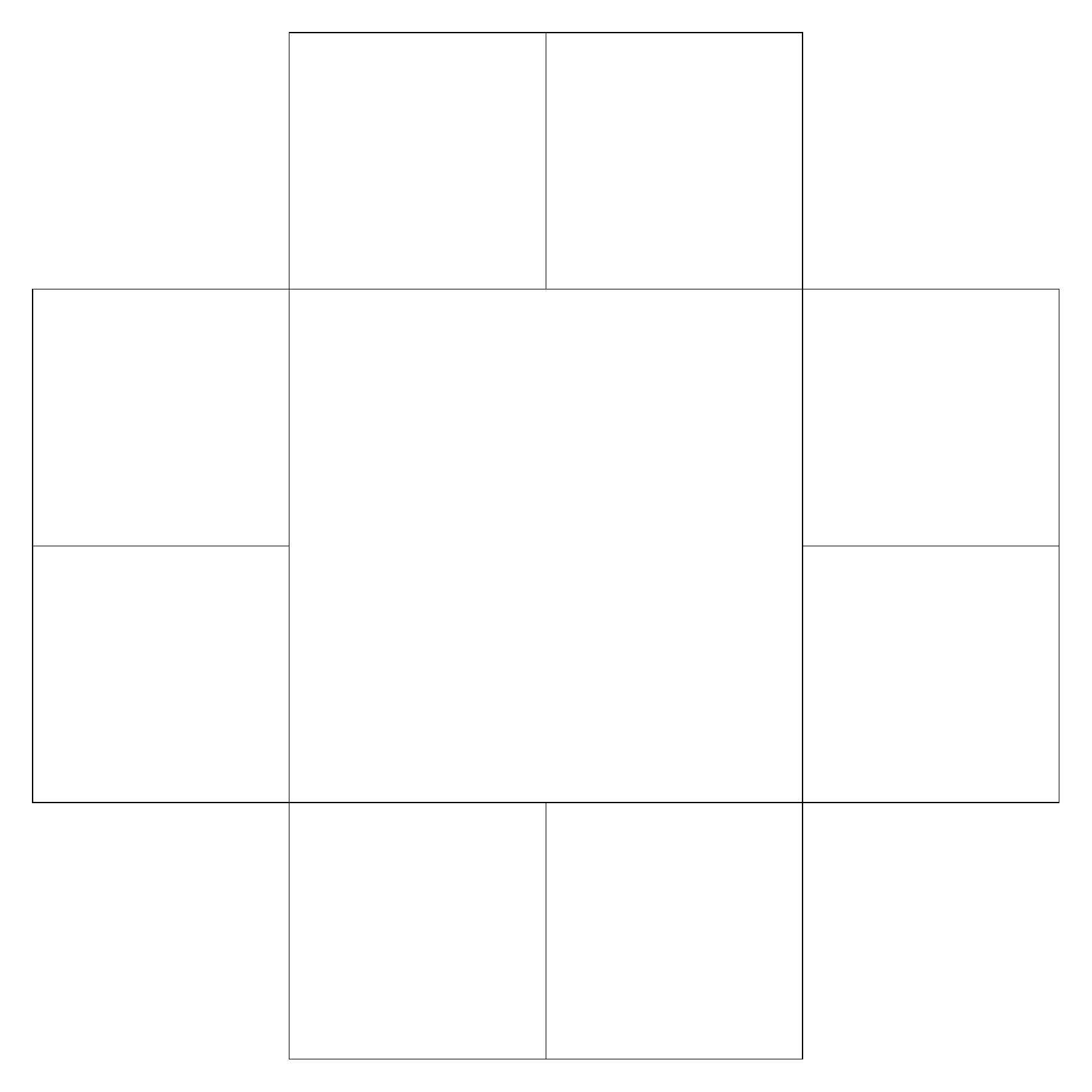}
	\caption{Octagon-shaped cluster.}
	\label{figure:octagon shaped cluster}
\end{figure}
\noindent
\section{Order Two Configurations}
\label{section:order two configuration}

\subsection{Weak Periodicity}
\label{subsection:weak periodicity}

\begin{theorem}
	\label{theorem:periodicity of binary configuration}
	Let $c$ be a binary configuration with a nontrivial annihilator.
	If $c$ has order $2$ then $c$ is weakly periodic.
\end{theorem}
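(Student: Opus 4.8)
\textit{Proof proposal.}
The plan is to distill the structure of $\ann(c)$ provided by Theorem~\ref{theorem:annihilator theorem kari szabdos} into a single annihilation relation $(U^p-1)(U^q-1)c=0$ with $p,q\in\Z^2$ linearly independent, and then to read off a partition of $c$ into two $1$-periodic subsets directly from this relation, using only the hypothesis that $c$ is $\set{0,1}$-valued.

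First I would write $\ann(c)=\ab{\phi_1\phi_2}H$ as in Theorem~\ref{theorem:annihilator theorem kari szabdos}, so each $\phi_i$ is a line polynomial in a primitive direction $v_i$ which is a product of factors $U^{v_i}-\omega$ with $\omega$ a root of unity; by part (d) the directions of $\phi_1$ and $\phi_2$ differ, hence $v_1,v_2$ are linearly independent and, comparing supports of irreducible factors, $\phi_1$ and $\phi_2$ are coprime in $\C[U^\pm]$. Choosing $p_0=N_1v_1$ and $q_0=N_2v_2$, with $N_i$ a common multiple of the orders of the roots of unity occurring in $\phi_i$, one has $\phi_1\mid U^{p_0}-1$ and $\phi_2\mid U^{q_0}-1$, hence $\phi_1\phi_2\mid (U^{p_0}-1)(U^{q_0}-1)$ by coprimality. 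Writing $H=\bigcap_j\ab{x-\alpha_j,y-\beta_j}$ and $h_x=\prod_j(x-\alpha_j)$, $h_y=\prod_j(y-\beta_j)$ (both in $H$), it follows that $(U^{p_0}-1)(U^{q_0}-1)h_x$ and $(U^{p_0}-1)(U^{q_0}-1)h_y$ lie in $\ab{\phi_1\phi_2}H=\ann(c)$. Therefore $d:=(U^{p_0}-1)(U^{q_0}-1)c$ is finitary (it is a fixed $\pm1$-combination of four translates of the $\set{0,1}$-valued $c$) and is annihilated by the cofinite ideal $\ab{h_x,h_y}$; since $h_x$ and $h_y$ are coprime and both annihilate $d$, the order of $d$ must be $0$ (a common line-polynomial factor of $\ann(d)$ would divide both $h_x$ and $h_y$), and Theorem~\ref{theorem:kari szabados decomposition theorem} then shows $d$ is biperiodic.

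Next I would upgrade this to a clean relation. Replace $p_0,q_0$ by multiples $p,q$ that are periods of the biperiodic configuration $d$, and write $U^p-1=(U^{p_0}-1)s(U)$, $U^q-1=(U^{q_0}-1)t(U)$ for suitable $s,t\in\C[U^\pm]$. Then $b:=(U^p-1)(U^q-1)c=s(U)t(U)d$ is again biperiodic and is fixed by both $U^p$ and $U^q$, so $(U^p-1)^2(U^q-1)c=(U^p-1)b=0$. Since $(U^q-1)c$ is finitary and $U^p-1$ is a line polynomial, Lemma~\ref{lemma:strong radicality for line polynomials} improves this to $(U^p-1)(U^q-1)c=0$.

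Finally, set $\Lambda=\Z p+\Z q$, a finite-index sublattice of $\Z^2$, and split $\Z^2$ into the cosets of $\Lambda$. On a coset $w+\Lambda$, in the coordinates $(a,b)\mapsto w+ap+bq$, the relation $(U^p-1)(U^q-1)c=0$ says that the mixed second difference of $(a,b)\mapsto c(w+ap+bq)$ vanishes, and the general solution of that equation has the form $f(a)+g(b)$ for some $f,g\colon\Z\to\C$. Here the binary hypothesis is used: if $f(a)+g(b)\in\set{0,1}$ for all $a,b\in\Z$, then $f$ or $g$ is constant, because if $f(a_1)\neq f(a_2)$ then $f(a_1)-f(a_2)=\pm1$, which forces $g$ to be constant. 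Thus on each coset $c$ depends only on the $p$-coordinate or only on the $q$-coordinate; grouping the cosets of the first kind yields a subset $S_1\subseteq\Z^2$ with $S_1+p=S_1$, the remaining cosets yield $S_2$ with $S_2+q=S_2$, and $c=\one_{S_1}+\one_{S_2}$ exhibits $c$ as weakly periodic (with two $1$-periodic components). The main obstacle is everything before the last paragraph: Theorem~\ref{theorem:kari szabados decomposition theorem} only supplies a complex-valued decomposition $c=c_1+c_2+c'$ with no control on the summands, so the crux is manufacturing the scalar relation $(U^p-1)(U^q-1)c=0$; once it is in hand, the passage to an honest partition into $1$-periodic subsets is elementary.
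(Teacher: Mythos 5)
Your proposal is correct, and its overall architecture coincides with the paper's: both reduce the theorem to producing a single scalar relation $(U^p-1)(U^q-1)c=0$ with $p,q$ linearly independent, and both then conclude exactly as in your last paragraph, by restricting to cosets of $\Z p+\Z q$ and using the $\set{0,1}$-valuedness to force each coset to be periodic in one of the two directions (the paper phrases this as: either every row of the restricted configuration is constant, or one non-constant row forces all columns to be constant). The only genuine divergence is in how the relation is manufactured. The paper obtains it in two lines from Theorem \ref{theorem:kari szabados decomposition theorem}: writing $c=c_0+c_1+c_2$ with $c_0$ biperiodic and $c_i$ periodic along $w_i$, any $n$ for which $nw_1$ and $nw_2$ fix the relevant summands gives $(U^{nw_2}-1)(U^{nw_1}-1)c=0$, since each factor kills the corresponding summand; the summands being complex-valued and uncontrolled is harmless for this purpose, so your closing remark somewhat undersells that route. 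Your alternative reconstructs the relation from the ideal $\ann(c)=\ab{\phi_1\phi_2}H$ itself, by exhibiting the annihilators $(U^{p_0}-1)(U^{q_0}-1)h_x$ and $(U^{p_0}-1)(U^{q_0}-1)h_y$, deducing that $d=(U^{p_0}-1)(U^{q_0}-1)c$ has order $0$ and hence is biperiodic, and then absorbing the periods of $d$ via Lemma \ref{lemma:strong radicality for line polynomials}; this is longer but stays entirely at the level of the annihilator ideal and is close in spirit to the paper's Lemma \ref{lemma:order and directions of weakly periodicity}. Two small repairs: the divisibility $\phi_i\mid U^{N_iv_i}-1$ requires $\phi_i$ to be squarefree, since $U^{N_iv_i}-1$ has distinct roots (otherwise replace ``divides'' by ``divides a power of'' and clean up at the end with Lemma \ref{lemma:strong radicality for line polynomials}, as you already do); and in the final paragraph the cosets on which $c$ depends only on the $p$-coordinate assemble into a set satisfying $S_1+q=S_1$ rather than $S_1+p=S_1$, an inconsequential label swap.
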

\begin{proof}
	By Theorem \ref{theorem:kari szabados decomposition theorem} we can write $c=c_0+c_1+c_2$, where $c_0$ is biperiodic and $c_1$ and $c_2$ are $1$-periodic in linearly independent directions.
	Without loss of generality we may assume that $c_1$ is periodic in the direction parallel to $x$-axis and $c_2$ is periodic in the direction parallel to the $y$-axis.
	Let $\Lambda$ be a finite index subgroup of $\Z^2$ under the action of which $c_0$ is invariant.
	Let $n$ be an integer such that $ne_1$ and $ne_2$ both lie in $\Lambda_0$.
	Then we have $(U^{ne_2} - 1)(U^{ne_1} - 1)c=0$.
	Fix integers $k$ and $\ell$ and define $d:\Z^2\to \set{0, 1}$ as $d(a, b) = c(k+na, \ell+nb)$.
	Then $(U^{e_2} - 1)(U^{e_1} - 1)d=0$.
	We claim that $d$ is $1$-periodic.
	If $d(\cdot, q)$ is constant for all $q$ then $d$ is $e_1$-invariant.
	In the other case, there is an integer $q$ and an integer $p$ such that $d(p, q) \neq d(p+1, q)$.
	It follows, since $d$ takes values in $\set{0, 1}$, that $d(p, \cdot)$ and $d(p+1, \cdot)$ are both constants.
	Inductively one can now show that $d(p, \cdot)$ is constant for all $p$ and hence $d$ is $e_2$-invariant.
	
	The above argument shows that $c$ is either $ne_1$ or $ne_2$-invariant on every coset of $n \Z\times n \Z$, and hence $c$ is weakly periodic, and can be written as the sum of two $1$-periodic binary configurations.
\end{proof}

\begin{corollary}
	\label{corollary:corollary to order two fact}
	Let $F$ be a cluster with $|\dir(F)|\leq 2$ and $T$ be an $F$-tiling.
	Then $T$ can be partitioned into at most two $1$-periodic subsets.

\end{corollary}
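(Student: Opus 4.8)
The plan is to feed $T$ into the structural machinery of Kari and Szabados and then split by the value of $\ord(T)$. The discussion preceding Lemma~\ref{lemma:order of a tiling} produces a nonzero polynomial $f_v = f - U^v f$ in $\ann(T)$ (for $v$ large enough), so $T$ is a binary --- hence finitary integral --- configuration with a nontrivial annihilator, and Theorems~\ref{theorem:annihilator theorem kari szabdos}, \ref{theorem:kari szabados decomposition theorem} and \ref{theorem:periodicity of binary configuration}, as well as Lemma~\ref{lemma:order and directions of weakly periodicity}, all apply to it. By Lemma~\ref{lemma:order of a tiling} we have $\ord(T) \le |\dir(F)| \le 2$, so it is enough to handle $\ord(T) \in \set{0, 1, 2}$.

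First I would dispose of $\ord(T) \le 1$. By Theorem~\ref{theorem:kari szabados decomposition theorem} we may write $T = c_1 + c'$ with $c'$ biperiodic and $c_1$ either $1$-periodic (if $\ord(T) = 1$) or the zero configuration (if $\ord(T) = 0$). Pick a nonzero $g \in \Z^2$ that is a period of $c_1$ --- any nonzero $g$ when $c_1 = 0$ --- and a finite-index subgroup $\Lambda \le \Z^2$ leaving $c'$ invariant; choosing $N \ge 1$ with $Ng \in \Lambda$, the polynomial $U^{Ng} - 1$ annihilates both $c_1$ and $c'$, hence $T$. Thus $T$ is $1$-periodic and the trivial partition $\set{T}$ suffices.

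It remains to treat $\ord(T) = 2$. Theorem~\ref{theorem:periodicity of binary configuration} tells us that $T$ is weakly periodic. If $T$ is biperiodic then it is periodic under a finite-index subgroup of $\Z^2$, which contains a nonzero vector, so $T$ is $1$-periodic and we are done. Otherwise Lemma~\ref{lemma:order and directions of weakly periodicity} applies: the least number $k$ of $1$-periodic binary configurations summing to $T$ equals $\ord(T) = 2$. Writing $T = T_1 + T_2$ with $T_1, T_2$ binary and $1$-periodic, the constraint that $T_1 + T_2$ be $\set{0,1}$-valued forces $T_1$ and $T_2$ to have disjoint supports, so $T = T_1 \sqcup T_2$ is the required partition into two $1$-periodic subsets. (Alternatively one can just quote the last sentence of the proof of Theorem~\ref{theorem:periodicity of binary configuration}, which already furnishes such a decomposition.)

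The argument has no genuinely hard step. The only points needing a little attention are that Theorem~\ref{theorem:periodicity of binary configuration} is phrased for order exactly $2$, so the cases $\ord(T) = 0, 1$ must be covered separately via the decomposition theorem, and that one has to upgrade ``weakly periodic of order $2$'' to an actual two-piece partition --- which is exactly the content of Lemma~\ref{lemma:order and directions of weakly periodicity}, once the degenerate biperiodic case is peeled off.
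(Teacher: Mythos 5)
Your proof is correct and follows essentially the same route as the paper, whose proof of this corollary is a one-line citation of Lemma \ref{lemma:order and directions of weakly periodicity} and Theorem \ref{theorem:periodicity of binary configuration} (with Lemma \ref{lemma:order of a tiling} implicitly supplying the bound $\ord(T)\le 2$); you have simply filled in the details. Your separate treatment of the cases $\ord(T)\le 1$ via Theorem \ref{theorem:kari szabados decomposition theorem}, and the observation that a $\set{0,1}$-valued sum of two binary $1$-periodic configurations forces disjoint supports, are exactly the points the paper's ``immediately'' glosses over.
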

\begin{proof}
	This follows immediately from Lemma \ref{lemma:order and directions of weakly periodicity} and Theorem \ref{theorem:periodicity of binary configuration}.
\end{proof}

We now show that natural generalizations of Theorem \ref{theorem:periodicity of binary configuration} do not hold, even after passing to the orbit closure.
Before we proceed let us review some terminology from symbolic dynamics.
A subset $S$ of $\Z$ is said to be \define{syndetic} if there is a positive integer $N$ such that for all integers $a$ we have $\set{a, a+1, \ldots, a+N-1}$ has a non-empty intersection with $S$.
A subset $S$ of $\Z^2$ is said to be \define{syndetic} if there is a positive integer $N$ such that every ball of radius $N$ in $\Z^2$ has a empty intersection with $S$.

Let $X$ be a compact metric space equipped with a continuous $\Z$ action.
We say that a point $x\in X$ is \define{uniformly recurrent} if the set
\begin{equation}
	R(x, U) = \set{n\in \Z:\ n\cdot x\in U}
\end{equation}
is syndetic for all neighborhoods $U$ of $x$ in $X$.
A similar definition of uniform recurrence can be given for a continuous $\Z^2$ action on $X$.

Given a $\Z$-action on $X$, we say that a subset $A$ of $X$ is \define{invariant } if $n\cdot x\in A$ for all $x\in A$ (and similarly for $\Z^2$-action).
An action of $\Z$ or $\Z^2$ on $X$ is said to be \define{minimal} if there is no non-empty proper closed invariant subset of $X$.
It is a fact that an action of $\Z$ or $\Z^2$ is minimal on $X$ if and only if every point of $X$ is uniformly recurrent.

Any binary configuration $c$ gives rise to a continuous action of $\Z^2$ on the orbit closure of $c$ in $\set{0, 1}^{\Z^2}$.
We say that $c$ is \define{minimal} if the action of $\Z^2$ on the orbit closure of $c$ is minimal.
This is equivalent to the uniform recurrence of $c$ in $\set{0, 1}^{\Z^2}$.
\subsection{Counterexample for Finitary Configurations}
\label{subsection:counterexample for finitary configurations}

We prove the existence of a minimal configuration $c:\Z^2\to \set{0, 1, 2}$ having order $2$ such that $c$ is not weakly periodic (by weak periodicity of $c$ we mean that each fiber of $c$ is weakly periodic).
Let $\vp, \psi\in \set{0, 1}^{\Z}$ be arbitrary.
Now define $c_{\vp, \psi}:\Z^2\to \set{0, 1, 2}$ as
\begin{equation}
	c_{\vp, \psi}(i, j)
	=
	\twopartdef{\vp(i)}{\psi(j) = 0}{\vp(i) + 1}{\psi(j) = 1}
\end{equation}
It is clear that every row of $(U^{e_2} - 1)c_{\vp, \psi}$ is constant, and hence $(U^{e_1} - 1)(U^{e_2} - 1)c_{\vp, \psi} = 0$.
Therefore $\ord(c_{\vp, \psi})$ is at most $2$.
Let $\mu$ be the uniform probability measure on $\set{0, 1}$, and $\mu^\Z$ be the corresponding product measure on $\set{0, 1}^{\Z}$.
Now given a nonzero vector $g$ in $\Z^2$, the probability that $c_{\vp, \psi}$ is constant on $\set{ng:\ n\in \Z}$ is $0$.
Thus, since $\Z^2$ is countable, for $\mu^\Z$-almost all $\vp$ and $\psi$ we have $c_{\vp, \psi}$ is not weakly periodic.
Lastly, it is also easy to argue that $c_{\vp, \psi}$ is uniformly recurrent, and hence its orbit closure is minimal with probability $1$.
We conclude that $c_{\vp, \psi}$ has order $2$ and does not have a weakly periodic point in its orbit closure with probability $1$, giving an abundance of the required counterexample.

\subsection{Counterexample for Binary Configurations with Order 3.}
\label{subsection:counterexample for binary configurations}

We show the existence of a minimal binary configuration $c$ which is not weakly periodic and has order $3$.
We use a construction given in \cite[Section 5]{kari_szabados_multi}.
Let $\alpha\in \R$ be irrational and define $c^{(1)}, c^{(2)}, c^{(3)}:\Z^2\to\set{0, 1}$ as
\begin{equation}
	c^{(1)}(i, j) = \floor{j\alpha},
	\quad
	c^{(2)}(i, j) = \floor{i\alpha},
	\quad 
	c^{(3)}(i, j) = \floor{(i+j)\alpha}
\end{equation}
Now define $c = c^{(3)} - c^{(1)} - c^{(2)}$ and note that $c$ is a binary configuration.
It was shown in \cite{kari_szabados_multi} that $c$ is not weakly periodic.

We argue that $c$ is uniformly recurrent.
Note that $c(i, j) = \floor{i\alpha+\set{j\alpha}} - \floor{i\alpha}$, where $\set{x}$ denotes the fractional part of $x$.
Let $N$ be an arbitrary positive integer and $R_N$ be the rectangle $[-N, N]\times [-N, N]$ in $\Z^2$.
We want to show that the pattern $c|_{R_N}$ induced on $R_N$ by $c$ appears in $c$ syndetically.
By the ergodicity of the irrational circle rotation, we see that $n\alpha$ comes arbitrarily close to $\alpha$ in $\R/\Z$ syndetically as $n$ varies over $\Z$, and thus the pattern that $c$ induces on $R_N$ is same as the pattern $c$ induces on $(n, 0) + R_N$ syndetically as $n$ varies over $\Z$.
Similarly, the pattern that $c$ induces on $R_N$ is same as the pattern $c$ induces on $(0, m) + R_N$ syndetically as $m$ varies over $\Z$.
Putting these two observations together one sees that $c|_{R_N}$ repeats syndetically in $c$.
Since this is true for all positive integers $N$, we deduce that $c$ is uniformly recurrent.

Thus the order of $c$ must be greater that $2$ since otherwise $c$, being uniformly recurrent, would be weakly periodic by Theorem \ref{theorem:periodicity of binary configuration}.
Also, $(U^{e_1} -1)(U^{e_2} - 1)(U^{e_1-e_2} - 1)$ is an annihilator of $c$.
Thus $c$ has order $3$ and hence is a required counterexample.

\section{Clusters of Cardinality the Square of a Prime}
\label{section:clusters of prime squared cardinality}

In \cite{szegedy_algorithms_to_tile} Szegedy showed that if $F$ is an exact cluster of prime cardinality in $\Z^2$ then every $F$-tiling is either $1$-periodic or biperiodic. (Szegedy proved periodicity results in higher dimensions as well).
In \cite[Example 4]{kari_szabados_alg_geom} this result was reproved by using polynomial techniques.

In this section we prove that if $F\subseteq \Z^2$ has cardinality of the form $p^2$, where $p$ is a prime, then the orbit closure of any $F$-tiling has a point of order at most $2$, and hence, by Corollary \ref{corollary:corollary to order two fact} it can be written as a sum of at most two $1$-periodic configurations.
Thus this result can be viewed as an extension of Szegedy's result (in two dimensions) up to passing to orbit closure.
First we collect some facts that will be used in the proof.

\subsection{Bhattacharya's Correspondence Principle}

In \cite{bhattacharya_tilings} Bhattacharya proved the periodic tiling conjecture (See \cite{lagarias_wang_tiling}) in two dimensions by developing a correspondence principle which allows to transfer the problem to an ergodic theoretic setting.
We discuss the relevant definitions and state the correspondence principle here which we will use later.

Let $X$ be a subshift of $\set{0, 1}^{\Z^2}$ and $\mu$ be a $\Z^2$-invariant probability measure on $X$.
The action of $\Z^2$ on $X$ gives an action of $\Z^2$ on $L^2(X, \mu)$:
for $\vp\in L^2(X, \mu)$ and $g\in \Z^2$, we have
\begin{equation}
	(g\cdot \vp)(x) = \vp(g\cdot x)
\end{equation}
for all $x\in X$.
In fact, $L^2(X, \mu)$ can be thought of as a $\C[U^\pm]$ module as follows.
For $\alpha(U) = \sum_{v} a_vU^v$ in $\C[U^\pm]$ and $\vp\in L^2(X, \mu)$, we have
\begin{equation}
	\alpha\cdot \vp = \sum_{v} a_v (v\cdot \vp)
\end{equation}
Thus, in particular, $U^v\vp = v\cdot \vp$.

We say that an element $\vp\in L^2(X, \mu)$ is \define{$1$-periodic} if there is $v\neq 0$ in $\Z^2$ such that $v\cdot \vp=\vp$.\footnote{We emphasize that this equation is written in $L^2$ and hence, when $f$ is an actual function, it only says that $f$ and $v\cdot f$ agree almost everywhere and not necessarily everywhere.}
An element $\vp\in L^2(X ,\mu)$ is called \define{biperiodic} if there exist two linearly independent vectors $u$ and $v$ in $\Z^2$ such that $v\cdot \vp=u\cdot \vp = \vp$.
A measurable subset $B$ of $X$ will be called $1$-periodic if $1_B$ is $1$-periodic and is called biperiodic if $1_B$ is biperiodic.
Finally, we define a measurable subset $B$ of $X$ to be \define{weakly periodic} if there exists a partition $B=B_1\sqcup \cdots \sqcup B_k$ of $B$ into finitely many measurable subsets $B_1, \ldots, B_k$ such that each $B_i$ is $1$-periodic.
Write $A=\set{x\in X:\ x(0, 0) = 1}$.

\begin{lemma}
	\emph{\cite[Section 2]{bhattacharya_tilings}}
	\label{label:bhattacharya correspondence principle}
	\textit{\textbf{Bhattacharya's Correspondence Principle.}}
	If $A$ is weakly periodic ($1$-periodic), then $\mu$-almost every point in $X$ is weakly periodic ($1$-periodic).
\end{lemma}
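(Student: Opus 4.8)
The plan is to exploit the fact that the symbolic coding of a point $x\in X$ by the set $A$ recovers $x$ itself (up to a reflection), so that the periodicity of $A$ as a measurable subset --- which is an almost-everywhere statement --- can be spread over the entire $\Z^2$-orbit of $\mu$-almost every point by using the countability of $\Z^2$ together with the $\Z^2$-invariance of $\mu$. Write the given partition as $A=A_1\sqcup\cdots\sqcup A_k$ with each $A_i$ measurable and $1$-periodic, say $v_i\cdot 1_{A_i}=1_{A_i}$ in $L^2(X,\mu)$ with $v_i\neq 0$; the $1$-periodic hypothesis is the case $k=1$. First I would record that for each $i$ the set $N_i:=\set{x\in X:\ 1_{A_i}(v_i\cdot x)\neq 1_{A_i}(x)}$ is $\mu$-null, and hence so is
\[
N:=\bigcup_{i=1}^{k}\ \bigcup_{g\in\Z^2}\set{x\in X:\ g\cdot x\in N_i},
\]
since the union is countable and $\mu$ is $\Z^2$-invariant.

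Next I would unwind membership in the complement of $N$. If $x\notin N$ then $g\cdot x\notin N_i$ for every $g\in\Z^2$ and every $i$, so $1_{A_i}(v_i\cdot(g\cdot x))=1_{A_i}(g\cdot x)$; since the $\Z^2$-action composes as $v_i\cdot(g\cdot x)=(g+v_i)\cdot x$, this reads $1_{A_i}((g+v_i)\cdot x)=1_{A_i}(g\cdot x)$ for all $g$. Setting $S_i(x):=\set{g\in\Z^2:\ g\cdot x\in A_i}$, this identity says precisely that $S_i(x)$ is invariant under translation by $v_i$, i.e. $S_i(x)$ is a $1$-periodic subset of $\Z^2$. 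Because $A=A_1\sqcup\cdots\sqcup A_k$, the sets $S_1(x),\ldots,S_k(x)$ are pairwise disjoint and $\bigcup_i S_i(x)=\set{g\in\Z^2:\ g\cdot x\in A}$, which by the shift convention $(g\cdot x)(u)=x(u-g)$ equals $-\supp(x)$. Since weak periodicity of a subset of $\Z^2$ is preserved under $g\mapsto -g$, this realizes $x$ (viewed as a subset of $\Z^2$) as a disjoint union of $k$ many $1$-periodic subsets, so $x$ is weakly periodic; when $k=1$ it is $1$-periodic. As $\mu(N)=0$, the lemma follows.

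The argument is largely bookkeeping, and the one point that needs genuine care is the passage from the $L^2$ (almost-everywhere) equality $v_i\cdot 1_{A_i}=1_{A_i}$ to a statement that holds \emph{simultaneously} for all translates $g\cdot x$ of a single good point $x$; this is exactly where countability of $\Z^2$ and invariance of $\mu$ are used, and it is the step I would write out in full. Everything else --- the composition rule for the action, the disjointness of the $S_i(x)$, and the identification of their union with a reflection of $\supp(x)$ --- is immediate from the definitions.
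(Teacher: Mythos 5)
Your proof is correct. The paper does not actually prove this lemma --- it is quoted from Section 2 of Bhattacharya's paper --- and your argument is precisely the standard one used there: pass from the $L^2$ identity $v_i\cdot 1_{A_i}=1_{A_i}$ to a $\mu$-conull set of points $x$ for which the identity holds simultaneously at every translate $g\cdot x$ (countability of $\Z^2$ plus invariance of $\mu$), and then observe that the sets $S_i(x)=\set{g:\ g\cdot x\in A_i}$ give a partition of $-\supp(x)$ into $v_i$-periodic pieces; the sign conventions and the composition rule $v_i\cdot(g\cdot x)=(g+v_i)\cdot x$ are handled correctly.
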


\subsection{A Digression: Order Versus Support of the Spectral Measure}
\label{subsection:order versus support of spectral measure}

Let $c$ be a binary configuration with a non-trivial annihilator.
Following \cite{bhattacharya_tilings}, let $X$ be the orbit closure of $c$ in $\set{0,1}^{\Z^2}$ and let $A\subseteq X$ be defined as
\begin{equation}
	A = \set{x\in X:\ x(0, 0) = 1}
\end{equation}
Equip $X$ with a $\Z^2$-ergodic measure $\mu$ and let $f= 1_A$.
\begin{lemma}
	\label{lemma:local to global}
	Let $\phi(U) \in \C[U^\pm]$ be a polynomial that annihilates $c$.
	Then $\phi$ annihilates $f$ too.
\end{lemma}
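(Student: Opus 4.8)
The plan is to move from the pointwise annihilation relation $\phi(U)\cdot c = 0$ in $\C[[U^\pm]]$ to the relation $\phi(U)\cdot f = 0$ in $L^2(X,\mu)$ by exploiting the fact that every point of $X$, being in the orbit closure of $c$, inherits the local patterns of $c$. First I would write $\phi(U) = \sum_{v} a_v U^v$ and define the continuous function $\Phi\colon X\to \C$ by $\Phi(x) = \sum_v a_v\, 1_A(v\cdot x) = (\phi(U)\cdot f)(x)$; since the sum is finite and $x\mapsto x(0,0)$ is continuous on $\set{0,1}^{\Z^2}$, the function $\Phi$ is continuous on $X$. Observe that for the base point $c$ itself, $\Phi(u\cdot c) = \sum_v a_v\, 1_A(v\cdot u\cdot c) = \sum_v a_v\, c(u+v) = (\phi(U)\cdot c)(u) = 0$ for every $u\in\Z^2$, using that $1_A(w\cdot c) = c(w)$. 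Thus $\Phi$ vanishes on the entire $\Z^2$-orbit of $c$ in $X$.

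The key step is then the observation that $\Phi$, being continuous and vanishing on the orbit of $c$, must vanish on the closure of that orbit, which is all of $X$. Hence $\Phi \equiv 0$ on $X$, i.e. $(\phi(U)\cdot f)(x) = 0$ for every $x\in X$, and in particular $\phi(U)\cdot f = 0$ in $L^2(X,\mu)$. This gives the lemma. I would phrase this carefully: the $\C[U^\pm]$-module structure on $L^2(X,\mu)$ from the preceding subsection has $U^v f$ represented by the function $x\mapsto 1_A(v\cdot x)$ (up to the a.e. identification, though here we in fact get an everywhere identity), so $\phi(U)\cdot f$ is represented by exactly the function $\Phi$ above, and $\Phi=0$ identically forces $\phi(U)\cdot f = 0$ in $L^2$.

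The main point requiring a little care — really the only obstacle — is matching the two different notions of "$\phi$ annihilates": in $\C[[U^\pm]]$ it is an honest identity of Laurent series (every coefficient is zero), whereas in $L^2(X,\mu)$ it is an identity of $L^2$-classes. The argument above is cleanest precisely because it establishes the stronger everywhere-on-$X$ statement, which then trivially implies the $L^2$ statement regardless of what $\mu$ is; so one never needs to worry about whether $\mu$ charges the orbit of $c$ or about null sets. I expect the write-up to be short: set up $\Phi$, check $\Phi$ vanishes on $\set{u\cdot c : u\in\Z^2}$ via the identity $1_A(w\cdot c)=c(w)$, invoke continuity and density of the orbit in $X$, and conclude.
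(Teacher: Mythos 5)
Your proposal is correct and is essentially the paper's own argument: both define the continuous function $x\mapsto\sum_v a_v\,1_A(v\cdot x)$ representing $\phi\cdot f$, check that it vanishes on the $\Z^2$-orbit of $c$ using the annihilation $\phi\cdot c=0$, and conclude by continuity and density of the orbit in $X$. The only quibble is a sign convention: with the paper's action $(v\cdot x)(u)=x(u-v)$ one gets $1_A(w\cdot c)=c(-w)$ rather than $c(w)$, so the orbit computation produces $(\phi\cdot c)$ evaluated at $-u$ instead of $u$; since $\phi\cdot c$ vanishes identically this is harmless.
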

\begin{proof}
	Let $\phi=\sum_{v} a_v U^v$, where only finitely many of the $a_v$ are nonzero.
	Then we want to show that $\phi\cdot f=0$.
	Let $c'$ be an arbitrary member of in the orbit of $c$.
	We will show that $\phi\cdot f$ vanishes at $c'$.
	Let $g\in \Z^2$ be such that $c'=g\cdot c = U^gc$.
	Then
	\begin{equation}
		(\phi\cdot f)c'
		=
		\lrb{(\sum_{v}a_vU^v)f}(g\cdot c)
		=
		\sum_{v} a_vf((v+g) \cdot c)
		=
		\sum_{v} a_v c(-g - v)
		=
		(\phi\cdot  c)(-g)
		= 0
	\end{equation}
	Now we have the desired result by using the continuity of $\phi\cdot f$.
\end{proof}

Let $\nu$ be the spectral measure on $\T^2$ corresponding to the function $f/\norm{f}_2$ as discussed in Section \ref{section:spectral theorem}.
In \cite[Lemma 3.2]{bhattacharya_tilings} Bhattacharya showed that there is a finite set $\Delta$ in $\Z^2\setminus\set{0}$ such that the support of $\nu$ is contained in the union of the kernels of the characters corresponding to the elements in $\Delta$.
This is the analytic analog of the notion of order as defined by Kari and Szabados in \cite{kari_szabados_alg_geom}.
Indeed, we make this analogy precise by means of the following lemma.

\begin{lemma}
	\label{lemma:bhattachrya kernel lemma}
	If $c$ has order $m$, then there exist pairwise linearly independent vectors $g_1, \ldots, g_m$ in $\Z^2$ such that
	\begin{equation}
		\supp(\nu)\subseteq \ker\chi_{g_1}\cup \cdots \cup\ker\chi_{g_m}
	\end{equation}
\end{lemma}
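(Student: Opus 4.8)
The plan is to push the algebraic annihilator of $c$ over to the unitary representation on $L^2(X,\mu)$ via Lemma \ref{lemma:local to global}, identify that representation spectrally, and then confine $\supp(\nu)$ to the zero set of $\phi_1\cdots\phi_m$ together with the finite variety cut out by $H$.

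First I would set up the spectral picture carefully. The vector $f=1_A$ need not be cyclic for all of $L^2(X,\mu)$, so I apply the Spectral Theorem (Theorem \ref{theorem:spectral theorem}) to the closure $H_f$ of $\text{Span}\set{U^v f:\ v\in\Z^2}$, on which $f/\norm{f}_2$ is a cyclic unit vector; this yields $\nu$ together with a unitary isomorphism $\theta:H_f\to L^2(\T^2,\nu)$ with $\theta(f/\norm{f}_2)=\one$ intertwining the action of $U^v$ with multiplication by the character $\chi_v$. Hence for a polynomial $\psi=\sum_v a_vU^v$ one gets $\theta(\psi\cdot f)=\norm{f}_2\sum_v a_v\chi_v$ in $L^2(\T^2,\nu)$. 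By Lemma \ref{lemma:local to global} every $\psi\in\ann(c)$ also annihilates $f$, so $\sum_v a_v\chi_v=0$ $\nu$-almost everywhere; since $t\mapsto\sum_v a_v e^{2\pi i\langle v,t\rangle}$ is continuous, it vanishes on all of $\supp(\nu)$. Writing $Z(\psi)=\set{t\in\T^2:\ \psi(e^{2\pi i t_1},e^{2\pi i t_2})=0}$, this gives $\supp(\nu)\subseteq\bigcap_{\psi\in\ann(c)}Z(\psi)$.

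Next I would unwind the right-hand side using Theorem \ref{theorem:annihilator theorem kari szabdos}, which gives $\ann(c)=\ab{\phi_1\cdots\phi_m}H$. In particular $\phi_1\cdots\phi_m\cdot h\in\ann(c)$ for every $h\in H$, so from the previous step $\supp(\nu)\subseteq\bigcap_{h\in H}Z(\phi_1\cdots\phi_m\cdot h)=Z(\phi_1\cdots\phi_m)\cup\bigcap_{h\in H}Z(h)=\bigcup_{j=1}^m Z(\phi_j)\cup E$, where $E=\set{t\in\T^2:\ (e^{2\pi i t_1},e^{2\pi i t_2})\in V(H)}$ is a finite set. At this point I need the fact, due to Kari and Szabados, that for a finitary integral configuration the maximal ideals making up $H$ have the form $\ab{x-\alpha,y-\beta}$ with $\alpha,\beta$ roots of unity; equivalently, $E$ consists of rational points of $\T^2$. (This can also be extracted from Theorem \ref{theorem:kari szabados decomposition theorem}: the biperiodic summand $c'$ has $x^N-1$ and $y^N-1$ among its annihilators for some $N$, which confines the corresponding spectrum to roots of unity.)

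Finally I would choose the vectors and conclude. Each $\phi_j$ is a product of factors $U^{v_j}-\omega_{jk}$ with $v_j$ a primitive vector and $\omega_{jk}$ a root of unity; let $M$ be a positive integer divisible by the order of every $\omega_{jk}$ and large enough that $Mt\in\Z^2$ for all $t\in E$, and put $g_j=Mv_j$. The $g_j$ are pairwise linearly independent because the $v_j$ are. If $t\in Z(\phi_j)$ then $e^{2\pi i\langle v_j,t\rangle}=\omega_{jk}$ for some $k$, hence $e^{2\pi iM\langle v_j,t\rangle}=1$, i.e. $\langle g_j,t\rangle\in\Z$ and $t\in\ker\chi_{g_j}$; and if $t\in E$ then $\langle g_j,t\rangle=\langle v_j,Mt\rangle\in\Z$, so again $t\in\ker\chi_{g_j}$. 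Therefore $\supp(\nu)\subseteq\bigcup_{j=1}^m\ker\chi_{g_j}$, which is the claim. I expect the only genuine subtlety to be the treatment of the $H$-part — arranging that $E$ is a set of rational points so that it can be absorbed into the kernels $\ker\chi_{g_j}$ — while everything else is a routine transport of the annihilator through the spectral isomorphism.
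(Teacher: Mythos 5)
Your argument is correct in outline but takes a genuinely different route from the paper. The paper never touches the ideal $H$: it invokes the decomposition theorem (Theorem \ref{theorem:kari szabados decomposition theorem}) to write $c=c_1+\cdots+c_m+c'$, scales the periods $g_i$ of the $c_i$ so that they also fix the biperiodic part $c'$, and thereby manufactures the single clean annihilator $(U^{g_1}-1)\cdots(U^{g_m}-1)$ of $c$; pushing this one polynomial through Lemma \ref{lemma:local to global} and the spectral theorem gives $(\chi_{g_1}-1)\cdots(\chi_{g_m}-1)=0$ in $L^2(\T^2,\nu)$, and the containment of $\supp(\nu)$ in the union of kernels is immediate. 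You instead transport the entire ideal $\ann(c)=\ab{\phi_1\cdots\phi_m}H$ and must then absorb the finite set $E$ cut out by $H$ into the kernels. That is where your proof is not self-contained: the paper's Theorem \ref{theorem:annihilator theorem kari szabdos} does not record that the maximal ideals in $H$ sit over roots of unity, and your parenthetical derivation does not quite close the gap, since $x^N-1$ and $y^N-1$ annihilate only the biperiodic summand $c'$, not $c$; to get annihilators of $c$ you must multiply back in the factors $U^{g_i}-1$, which only shows that each point of $E$ is either a rational point or lies on some $\ker\chi_{g_i}$. The latter points are harmless for your conclusion (they already lie in the target union, provided $M$ is chosen divisible by the scaling factors relating the $g_i$ to the primitive directions $v_i$), so your argument can be completed, but as written the rationality of $E$ is asserted rather than proved. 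What your route buys is a sharper intermediate statement, $\supp(\nu)\subseteq\bigcap_{\psi\in\ann(c)}Z(\psi)$, which is exactly the mechanism the paper later exploits in the proof of Theorem \ref{theorem:prime square theorem}; what the paper's route buys is brevity and independence from any facts about $H$.
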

\begin{proof}
	By Theorem \ref{theorem:kari szabados decomposition theorem} we can write $c$ as
	\begin{equation}
		c = c_1 + \cdots + c_m + c'
	\end{equation}
	where each $c_i$ is $1$-periodic (possibly complex valued) and $c'$ is biperiodic (again, possibly complex valued).
	Let $g_1, \ldots, g_m$ be nonzero periodicity vectors for $c_1, \ldots, c_m$.
	By scaling the $g_i$'s if necessary, we may assume that each $g_i$ fixes $c'$.
	Since the order of $c$ is $m$, we must have that the $g_i$'s are pairwise linearly independent. 
	It follows that $(U^{g_1} - 1) \cdots (U^{g_m} - 1) c = 0$, and hence, by Lemma \ref{lemma:local to global}, we have
	\begin{equation}
		(U^{g_1}-1) \cdots (U^{g_m} - 1) f = 0
	\end{equation}
	Applying the spectral theorem, we get that
	\begin{equation}
		(\chi_{g_1} - 1) \cdots (\chi_{g_m} - 1) = 0
	\end{equation}
	in $L^2(\T^2, \nu)$, whence the desired result is immediate.
\end{proof}

\begin{corollary}
	\label{corollary:very wekaly periodic decomposition}
	If $c$ has order $2$ then there exist linearly independent vectors $g$ and $h$ in $\Z^2$ such that
	\begin{equation}
		f = f_0 + f^g + f^h
	\end{equation}
	where $f_0$ is biperiodic and $f^g$ and $f^h$ are the orthogonal projections of $f$ onto the space of $g$ and $h$-invariant vectors in $L^2(X, \mu)$ respectively.
\end{corollary}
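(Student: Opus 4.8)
The plan is to work in $L^2(X,\mu)$ with the two commuting unitaries $U^g$ and $U^h$ and the orthogonal projections onto their fixed subspaces. The only input I will take from the preceding material is the operator identity that powers the proof of Lemma~\ref{lemma:bhattachrya kernel lemma}: specialized to $m=2$, that argument produces linearly independent $g,h\in\Z^2$ with $(U^g-1)(U^h-1)c=0$, and hence (by Lemma~\ref{lemma:local to global}) $(U^g-1)(U^h-1)f=0$ in $L^2(X,\mu)$.

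Let $V_g=\ker(U^g-1)$ and $V_h=\ker(U^h-1)$ be the closed subspaces of $g$- and $h$-invariant vectors, and let $P_g,P_h$ be the orthogonal projections onto them, so that by definition $f^g=P_gf$ and $f^h=P_hf$. Since $U^g$ and $U^h$ commute, $U^h$ preserves $V_g$, hence (being unitary) also $V_g^{\perp}$, so $P_g$ commutes with $U^h$; symmetrically $P_h$ commutes with $U^g$, and therefore $P_g$ commutes with $P_h$ — I would simply record these standard facts. The decomposition then falls out of a short computation. From $(U^g-1)(U^h-1)f=0$ we get $(U^h-1)f\in V_g$, so $P_g(U^h-1)f=(U^h-1)f$; commuting $P_g$ past $U^h-1$ yields $(U^h-1)P_gf=(U^h-1)f$, i.e.\ $(U^h-1)(f-P_gf)=0$, so $f-P_gf\in V_h$ and hence $P_h(f-P_gf)=f-P_gf$. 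Rearranging,
\[
  f \;=\; P_gf \;+\; P_hf \;-\; P_hP_gf .
\]
Setting $f^g=P_gf$, $f^h=P_hf$, and $f_0=-P_hP_gf$ gives the asserted decomposition as a formal identity, with $f^g$ and $f^h$ the prescribed orthogonal projections by construction.

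It remains to check that $f_0$ is biperiodic. Since $P_h$ commutes with $U^g$ it preserves $V_g$, so $P_hP_gf\in V_g$; and $P_hP_gf\in V_h$ because it lies in the range of $P_h$. Thus $f_0$ is fixed by both $U^g$ and $U^h$, hence by $U^v$ for every $v$ in the subgroup $\Lambda\le\Z^2$ generated by $g$ and $h$; as $g$ and $h$ are linearly independent, $\Lambda$ has finite index in $\Z^2$, so $f_0$ is biperiodic. This completes the proof.

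There is no genuine obstacle here beyond bookkeeping: the conceptual content lives in Lemma~\ref{lemma:bhattachrya kernel lemma}, and what remains is extraction. The two points needing a little care are that $P_g,P_h$ must be taken inside all of $L^2(X,\mu)$ rather than the cyclic subspace generated by $f$, and the commutation relations among $P_g,P_h,U^g,U^h$. For a reader who prefers the spectral-measure viewpoint, the same decomposition is transparent there: under the isomorphism $\theta$ of Theorem~\ref{theorem:spectral theorem}, $f$ corresponds to a constant, $P_g$ and $P_h$ to multiplication by $1_{\ker\chi_g}$ and $1_{\ker\chi_h}$, and the displayed identity is just inclusion–exclusion together with $\supp(\nu)\subseteq\ker\chi_g\cup\ker\chi_h$, the term $f_0$ being supported on the finite subgroup $\ker\chi_g\cap\ker\chi_h$ of $\T^2$.
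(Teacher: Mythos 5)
Your proof is correct. It reaches the same inclusion--exclusion identity the paper has in mind, but by a genuinely different and more self-contained route. The paper's proof is a citation: it invokes \cite[Theorem 3.3]{bhattacharya_tilings} together with Lemma \ref{lemma:bhattachrya kernel lemma}, i.e.\ it transports $f$ to $L^2(\T^2,\nu)$ via Theorem \ref{theorem:spectral theorem}, uses $\supp(\nu)\subseteq\ker\chi_g\cup\ker\chi_h$, and reads off the decomposition from the indicators $1_{\ker\chi_g}$ and $1_{\ker\chi_h}$ --- exactly the ``spectral-measure viewpoint'' you sketch in your closing paragraph. What you actually write out instead stays entirely in $L^2(X,\mu)$: you extract from the proof of Lemma \ref{lemma:bhattachrya kernel lemma} only the relation $(U^g-1)(U^h-1)f=0$ with $g,h$ linearly independent (available precisely because $\ord(c)=2$), and then obtain $f=P_gf+P_hf-P_hP_gf$ by a direct computation with the commuting projections. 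This buys two things: it avoids the external citation and the spectral theorem altogether, and it sidesteps the point that the projections in the statement are taken in all of $L^2(X,\mu)$ while the isomorphism $\theta$ only sees the cyclic subspace $H_f$ --- an issue the paper only addresses later, in a footnote to the proof of Theorem \ref{theorem:prime square theorem}, via the ergodic theorem. The only facts you leave to the reader, namely $P_gU^h=U^hP_g$ and $P_gP_h=P_hP_g$, are indeed standard (e.g.\ from the mean ergodic theorem $P_h=\lim_N N^{-1}\sum_{n=0}^{N-1}U^{nh}$), and your verification that $f_0=-P_hP_gf$ lies in the range of both projections, hence is fixed by the finite-index subgroup generated by $g$ and $h$, correctly delivers the biperiodicity.
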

\begin{proof}
	This is immediate from \cite[Theorem 3.3]{bhattacharya_tilings} and Lemma \ref{lemma:bhattachrya kernel lemma}.\footnote{The idea is to pass to the $L^2(\T^2, \nu)$ world by applying the spectral theorem and proving the corresponding statement there.}
\end{proof}

\subsection{Preparatory Lemmas}
\label{subsection:elementary results}
\label{subsection:preparatory results}

\begin{lemma}
	\label{lemma:divisibility rectangle lemma}
	Let $p$ be a prime and $S$ be a subset of $\Z^2$ of cardinality $p^2$ such that there exist two distinct lines $l$ and $m$ passing thorough the origin such that whenever a line is parallel to either $l$ or $m$, it intersects $S$ in a set of cardinality divisible by $p$.
	Then $S$ is a rectangle.
\end{lemma}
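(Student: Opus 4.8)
The plan is to argue purely combinatorially, working with the two pencils of lattice lines determined by $l$ and $m$. First I would note that $l$ and $m$ must be rational lines: if $l$ contained no nonzero integer vector, every line parallel to $l$ would meet $\Z^2$—hence $S$—in at most one point, which the divisibility hypothesis forces to be zero, contradicting $|S|=p^2$. So I fix primitive vectors $u\in l\cap\Z^2$ and $v\in m\cap\Z^2$; these are linearly independent since $l\neq m$, and the lines parallel to $l$ (resp. $m$) that meet $\Z^2$ are precisely the cosets of $\Z u$ (resp. $\Z v$).

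Next I would set up the bookkeeping. Let $L_1,\dots,L_a$ be the cosets of $\Z u$ meeting $S$ and $M_1,\dots,M_b$ the cosets of $\Z v$ meeting $S$, and write $|S\cap L_i|=pk_i$, $|S\cap M_j|=p\ell_j$ with all $k_i,\ell_j\geq 1$ (using the hypothesis). Summing over $i$, resp.\ $j$, gives $\sum_i k_i=\sum_j \ell_j=p$, so in particular $a\leq p$ and $b\leq p$. The crux is a double count: each point of $S\cap L_i$ lies on exactly one coset of $\Z v$, which (meeting $S$) must be one of $M_1,\dots,M_b$, and two distinct points of $S\cap L_i$ lie on distinct such cosets, since two lines with independent directions meet in at most one point. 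Hence $b\geq |S\cap L_i|=pk_i\geq p$, which forces $b=p$, every $\ell_j=1$, and every $k_i=1$; then $\sum_i k_i=p$ gives $a=p$. (The symmetric count gives the same conclusions with the roles of $l,m$ exchanged, consistently.)

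To finish, I would observe that the $ab=p^2$ intersection points $L_i\cap M_j$ are pairwise distinct—an intersection point determines both indices—and every point of $S$ is one of them, so $|S|=p^2$ forces $S=\{L_i\cap M_j:1\leq i,j\leq p\}$; in particular every such intersection point lies in $\Z^2$. Setting $g=L_1\cap M_1$, I would write $L_1\cap M_j-g=s_j u$ (an element of $\Z^2\cap\R u=\Z u$, by primitivity of $u$) and $L_i\cap M_1-g=t_i v\in\Z v$, and then check, by applying the same difference argument along $L_i$ and along $M_j$ and using linear independence of $u,v$, that $L_i\cap M_j=g+s_j u+t_i v$. Since the $s_j$ are distinct and the $t_i$ are distinct, this exhibits $S=g+\{s_1,\dots,s_p\}u+\{t_1,\dots,t_p\}v$ as a rectangle with sides parallel to $l$ and $m$.

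I do not expect a serious obstacle here; the only step needing care is the double count pinning down $a=b=p$ and all multiplicities to $1$, and the observation that $S$ then exhausts the full grid of intersection points (rather than being a proper subset). I would also remark that primality of $p$ is never used: the same argument shows that any subset of $\Z^2$ of cardinality $n^2$ meeting every line parallel to $l$ or to $m$ in a multiple of $n$ points is a rectangle.
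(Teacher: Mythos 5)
Your argument is correct and is essentially the paper's proof: the same double count (at least $p$ lines in each of the two admissible directions, each meeting $S$ in at least $p$ points, which saturates $|S|=p^2$) pins $S$ down as the full $p\times p$ grid of intersection points. The only differences are cosmetic --- you work directly with cosets of $\Z u$ and $\Z v$ and justify the rationality of $l$ and $m$ explicitly, where the paper normalizes to the coordinate axes ``without loss of generality'' --- and your closing remark that primality of $p$ is never used is accurate.
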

\begin{proof}
	Without loss of generality assume that $l$ is in the direction of $(1, 0)$ and $m$ is in the direction of $(0, 1)$.
	Let $b\in \Z$ be the smallest integer such that the line $\set{(x, b)\in \Z^2:\ x\in \Z}$ intersects $F$.
	By translating $F$ if necessary, we may assume that $b=0$.
	Let $a$ be the smallest integer such that $(a, 0)$ is in $F$, and again, we may assume that $a=0$.
	Define
	\begin{equation}
		A=F\cap\set{(m, 0):\ m\in \Z} \text{ and } B = F\cap \set{(0, n):\ n\in \Z}
	\end{equation}
	We claim that $F=A\times B$.
	Let $A=\set{(a_0, 0), , \ldots, (a_{kp-1}, 0)}$, where $0=a_0< \cdots <a_{kp-1}$, and $k$ is a positive integer.
	Similarly, let $B=\set{(0, b_0), \ldots, (0, b_{\ell p-1})}$, where $0=b_0< \cdots < b_{\ell p-1}$ and $\ell$ is a positive integer.
	The sets
	\begin{equation}
		F\cap \set{(a_i, n):\ n\in \Z}, i=0, \ldots, kp-1
	\end{equation}
	are pairwise disjoint and each of these has size divisible by $p$.
	Thus the size of $F$ is at least $kp^2$, forcing $k=1$.
	Similarly $\ell=1$.
	This also shows that $F$ is contained in $A\times \Z$.
	Similarly, $F$ is contained in $\Z\times B$.
	Therefore $F$ is contained in $A\times B$.
	Since $F$ and $A\times B$ have the same cardinality, we have $F=
	A\times B$.
\end{proof}

An element $\gamma\in S^1$ is said to be \define{irrational} if it is of the form $e^{i\theta}$, there $\theta$ is an irrational multiple of $2\pi$.
Equivalently, $\gamma\in S^1$ is irrational if there is no non-trivial character of $S^1$ in whose kernel $\gamma$ lies.
More generally, elements $\gamma_1, \ldots, \gamma_m$ in $S^1$ are called \define{rationally independent} if there is no non-trivial character $\chi$ of $(S^1)^m$ such that $\chi(\gamma_1, \ldots, \gamma_m) = 1$.

\begin{lemma}
	\label{lemma:non measure preparatory lemma}
	Let $\gamma_1, \ldots, \gamma_n$ be irrational elements in $S^1$ and $x_1, \ldots, x_n$ be complex numbers.
	Assume that
	\begin{equation}
		(\gamma_1^k - 1) x_1 + \cdots + (\gamma_n^k - 1) x_n \in \Z
	\end{equation}
	for all non-negative integers $k$.
	Then the above expression is $0$ for all $k$.
\end{lemma}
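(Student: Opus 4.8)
The plan is to treat the left-hand side as a function of $k$ and extract enough linear relations to pin down the $x_i$. Write $P(k) = \sum_{i=1}^n (\gamma_i^k - 1)x_i = \sum_{i=1}^n \gamma_i^k x_i - \bigl(\sum_i x_i\bigr)$. This is an exponential polynomial in $k$ with frequencies among $\gamma_1, \ldots, \gamma_n$ and $1$. The hypothesis says $P(k) \in \Z$ for every $k \ge 0$, and in particular $P(k)$ is a \emph{bounded} sequence: indeed, first I would argue boundedness directly, since $|\gamma_i| = 1$ gives $|P(k)| \le 2\sum_i |x_i|$, so $P$ takes values in a finite subset of $\Z$, say $\{-N, \ldots, N\}$.

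The key step is then to exploit the rational independence of the frequencies after collapsing repeats. Group the $\gamma_i$ into equivalence classes under equality and, within the abstract subgroup of $S^1$ they generate, pass to a maximal rationally independent subset $\delta_1, \ldots, \delta_r$ (each an irrational element, so none equal to $1$, and no $\delta_j$ a root of unity times a monomial in the others); every $\gamma_i$ is then a product of a root of unity and a monomial in the $\delta_j$, but since the $\gamma_i$ are irrational and the relation structure is controlled, the cleanest route is: by Kronecker's theorem / equidistribution, the orbit $\{(\gamma_1^k, \ldots, \gamma_n^k) : k \ge 0\}$ is dense in a subtorus $V \subseteq (S^1)^n$ determined by the multiplicative relations among the $\gamma_i$. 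The function $(z_1, \ldots, z_n) \mapsto \sum_i (z_i - 1)x_i$ is continuous on $(S^1)^n$, and on the dense orbit it takes values in $\{-N, \ldots, N\}$; hence it takes values in that finite set on all of $V$. A continuous function on a connected space $V$ with finite range is constant, so $\sum_i (z_i-1)x_i$ equals a fixed integer $c$ for all $(z_i) \in V$. Since $(1,1,\ldots,1) \in V$ (it is the $k=0$ point, and it also lies in any subtorus through the identity), evaluating there gives $c = 0$. Therefore $P(k) = 0$ for all $k$, which is the claim.

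The main obstacle I expect is making precise the passage from "values on the dense orbit lie in a finite set" to "the closure $V$ is connected." Connectedness of $V$ is the crucial input: $V$ is the Zariski/topological closure of a one-parameter subgroup's image, i.e. the closure of $\{(\chi_{g_1}, \ldots) \text{ at } k\alpha\}$-type data, and such a closure in $(S^1)^n$ is a closed subgroup, hence a finite union of cosets of its identity component $V^0$; because the orbit is generated by iterating a single element $(\gamma_1, \ldots, \gamma_n)$ starting from the identity, one checks the closure is in fact the cyclic-closure, which is connected precisely when $(\gamma_1, \ldots, \gamma_n)$ lies in $V^0$ — and since the $\gamma_i$ are irrational this is where one uses that no nontrivial character kills a single $\gamma_i$. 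An alternative, more elementary route avoiding topology: take iterated finite differences. The operator $\Delta_m$ defined by $(\Delta_m a)(k) = a(k+m) - a(k)$, applied to $\gamma_i^k$, multiplies it by $(\gamma_i^m - 1)$; choosing $m$ so that $\gamma_i^m$ is close to $1$ for all $i$ simultaneously (possible by simultaneous Dirichlet approximation, again using $|\gamma_i|=1$), one shows $\Delta_m P$ is both integer-valued and small, hence identically zero, and then unwinds. Either way the heart of the matter is the same simultaneous-approximation / equidistribution fact, so I would present the topological-group version as the main line and remark that it packages the Dirichlet-approximation argument.

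Finally I would record the trivial but needed observation that $P(0) = 0$ automatically, and that subtracting the constant term is legitimate because $\sum_i x_i \in \Z$ (take $k=1$ minus nothing, or note $P$ integer-valued forces its mean to be rational — cleanest is just that $P(0)=0$ already and the constancy argument hands us $c=0$ without needing $\sum x_i \in \Z$ separately).
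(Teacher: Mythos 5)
Your overall strategy --- Kronecker density of the orbit $\{(\gamma_1^k,\ldots,\gamma_n^k)\}$, continuity and integrality of $(z_1,\ldots,z_n)\mapsto\sum_i(z_i-1)x_i$, connectedness of the orbit closure, and evaluation at the identity --- is essentially the paper's, which runs the argument on a full torus $(S^1)^m$ after passing to a maximal rationally independent subset. But there is a genuine gap at exactly the point you flag as the main obstacle: the closure $V$ of the cyclic orbit need \emph{not} be connected, and irrationality of each individual $\gamma_i$ does not repair this. Take $\gamma_1=e^{2\pi i\alpha}$ with $\alpha$ irrational and $\gamma_2=-\gamma_1$; both are irrational, so no character of $(S^1)^2$ supported on a single coordinate kills $(\gamma_1,\gamma_2)$, yet the character $(z_1,z_2)\mapsto z_1z_2^{-1}$ sends $(\gamma_1,\gamma_2)$ to $-1$, so $V=\{(z,z):z\in S^1\}\cup\{(z,-z):z\in S^1\}$ is a disjoint union of two circles. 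The correct criterion for connectedness is that no monomial $\prod_i\gamma_i^{a_i}$ is a nontrivial root of unity, which is strictly stronger than irrationality of each $\gamma_i$ and is not available here. Your elementary fallback has the same residual problem: simultaneous approximation only produces return times $m$ lying in a sub-progression (all even, in the example above), so $\Delta_m P\equiv 0$ shows that $P$ is periodic with some period $d$ and vanishes on the residue class of $0$, but says nothing about $P(1),\dots,P(d-1)$.

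The statement is still true, and your setup can be completed, but it needs one more idea in the disconnected case. Since $F(z)=\sum_i(z_i-1)x_i$ is continuous and $\Z$-valued on $V$, it is locally constant, hence equal to a constant $c_j$ on each coset $C_j=g^jV^0$ of the identity component, with $c_0=F(1,\dots,1)=0$ and $d=[V:V^0]$. Integrating over $C_j$ against its Haar measure gives $c_j=\sum_i x_i\bigl(\gamma_i^j\int_{V^0}w_i\,d\lambda(w)-1\bigr)$. If some coordinate character $w\mapsto w_i$ were trivial on $V^0$, then $\gamma_i^d=1$ (because $g^d\in V^0$), contradicting irrationality of $\gamma_i$; hence every such integral vanishes and $c_j=-\sum_i x_i$ for \emph{every} $j$, so all the $c_j$ coincide with $c_0=0$. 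This averaging step (or, equivalently, a reduction to a rationally independent generating set for which the closure is a full, hence connected, torus) is what your proposal is missing, and it is also precisely where the irrationality hypothesis genuinely enters.
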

\begin{proof}
	Let $m$ be the size of a maximal rationally independent subset of $\set{\gamma_1, \ldots, \gamma_n}$.
	By renumbering the $\gamma_i$'s and $x_i$'s if required, we may assume that $\set{\gamma_1, \ldots, \gamma_m}$ is a maximal rationally independent subset of $\set{\gamma_1, \ldots, \gamma_n}$.
	Thus we can find vectors $v_1, \ldots, v_n$ in $\Z^m$ such that
	\begin{equation}
		\label{equation:expression for mu}
		\gamma_1^{v_i(1)} \cdots \gamma_m^{v_i(m)} = \gamma_i
	\end{equation}
	for $i=1, \ldots, n$.
	where $v(i)$ denotes the $i$-th coordinate of any $v\in \Z^m$.
	Define the Laurent polynomial $f(U)$ in $\C[U_1^\pm, \ldots, U_m^\pm]$ as
	\begin{equation}
		f(U) = (U^{v_1} - 1)x_1 + \cdots + (U^{v_n} - 1)x_n
	\end{equation}
	Then we have
	\begin{equation}
		\label{equation:hook equation density}
		f(\gamma_1^k, \ldots, \gamma_m^k)
		=
		(\gamma_1^k - 1)x_1 + \cdots + (\gamma_n^k - 1)x_n
	\end{equation}
	By the rational independence of $\gamma_1, \ldots, \gamma_m$, we know that the set $\set{(\gamma_1^k, \ldots, \gamma_m^k):\ k\geq 0}$ is dense in $(S^1)^m$.\footnote{See Theorem 4.14 in \cite{einsiedler_ward_ergodic_theory}.}
	Thus the image of $f$ on a dense set of $(S^1)^m$ is contained in $\Z$.
	Since $f$ is continuous, this implies that the image of $(S^1)^m$ under $f$ is contained in $\Z$.
	The connectedness of $(S^1)^m$ now yields that the image of $(S^1)^m$ under $f$ is a singleton.
	However, for any $\varepsilon>0$, we can find a $k$ such that $|\gamma_i^k-1|<\varepsilon$ for each $1\leq i\leq n$, and thus we must have that $f$ is identically zero on $(S^1)^m$, finishing the proof.
\end{proof}

\begin{lemma}
	\label{lemma:kernel of character final lemma}
	Let $h=(a, b)$ be a primitive vector in $\Z^2$ and $n$ be a positive integer.
	Then
	\begin{equation}
		\ker(\chi_{n h})
		=
		\lrset{
			\lrp{
				\frac{ka}{n(a^2+b^2)} - bt, \ \frac{kb}{n(a^2+b^2)}+at
			}
			:\ t\in \R/\Z, k\in \Z
		}
	\end{equation}
\end{lemma}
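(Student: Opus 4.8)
The plan is to unwind the definitions so that the claimed identity becomes a short direct computation in $\R^2/\Z^2$. With the standard identification $\T^2=\R^2/\Z^2$, the character attached to $g=(g_1,g_2)\in\Z^2$ is $\chi_g(s_1,s_2)=e^{2\pi i(g_1 s_1+g_2 s_2)}$. Taking $g=nh=(na,nb)$, a point $(s_1,s_2)\in\T^2$ lies in $\ker\chi_{nh}$ precisely when $n(as_1+bs_2)\in\Z$. So the lemma reduces to showing that the right-hand side of the display equals $\lrset{(s_1,s_2)\in\T^2:\ n(as_1+bs_2)\in\Z}$, which I would prove by two inclusions.

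For the inclusion of the right-hand side into $\ker\chi_{nh}$ I would simply substitute: for $(s_1,s_2)=\lrp{\frac{ka}{n(a^2+b^2)}-bt,\ \frac{kb}{n(a^2+b^2)}+at}$ one computes
\[
n(as_1+bs_2)=n\lrp{\frac{ka^2}{n(a^2+b^2)}-abt+\frac{kb^2}{n(a^2+b^2)}+abt}=k\in\Z ,
\]
so $(s_1,s_2)\in\ker\chi_{nh}$. For the reverse inclusion, given $(s_1,s_2)\in\ker\chi_{nh}$ I would set $k:=n(as_1+bs_2)\in\Z$ and $t:=\frac{as_2-bs_1}{a^2+b^2}\in\R/\Z$; then, using $as_1+bs_2=k/n$, a one-line check gives
\[
\frac{ka}{n(a^2+b^2)}-bt=\frac{a(as_1+bs_2)-b(as_2-bs_1)}{a^2+b^2}=s_1 ,
\]
and likewise $\frac{kb}{n(a^2+b^2)}+at=s_2$, so $(s_1,s_2)$ has exactly the stated form.

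The only point that needs a word of care—what I would flag as the delicate bit rather than a genuine obstacle—is the bookkeeping modulo $\Z^2$. I should note that the right-hand side is a well-defined subset of $\T^2$: replacing $t$ by $t+1$ shifts the point by $(-b,a)\in\Z^2$ (here $h\neq 0$ is used), so the parameter $t$ genuinely lives in $\R/\Z$; and in the reverse inclusion one should observe that the displayed formulas recover $(s_1,s_2)$ on the nose in $\R^2$, not merely up to $\Z^2$, so that $t\bmod\Z$ is exactly the right choice of parameter. Primitivity of $h$ is in fact not needed for this set identity, but it is harmless to keep it as in the statement. No step requires anything beyond the elementary algebra above.
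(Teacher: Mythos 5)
Your proof is correct, but it takes a genuinely more direct route than the paper. The paper derives the lemma from a chain of three auxiliary statements in the appendix: it first uses primitivity of $h$ (via B\'ezout) to show that $ax+by\in\Z$ forces $(x,y)\equiv(-bt,at)\pmod{\Z^2}$, deduces $\ker(\chi_h)=\set{(-bt,at)}$, then lifts to $\R^2$ and writes an arbitrary point as $\lambda(a,b)+t(-b,a)$ to obtain the $k$-parametrization of $\ker(\chi_h)$ (invoking primitivity again so that $ma+nb$ runs over all of $\Z$), and finally reduces $\ker(\chi_{nh})$ to $\ker(\chi_h)$ via $(x,y)\mapsto(nx,ny)$. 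You instead verify both inclusions in one step by exhibiting the parameters explicitly, $k=n(as_1+bs_2)$ and $t=(as_2-bs_1)/(a^2+b^2)$; these are precisely the coordinates of $(s_1,s_2)$ in the orthogonal basis $\set{(a,b),(-b,a)}$, so in spirit it is the same decomposition as the paper's third lemma, compressed into a single computation. What your version buys: it bypasses the intermediate lemmas and the somewhat terse final reduction step, it makes the well-definedness of the right-hand side modulo $\Z^2$ explicit (the shift $t\mapsto t+1$ moving the point by $(-b,a)\in\Z^2$), and it shows, as you correctly note, that primitivity of $h$ is not actually needed for this particular identity, even though the paper's route uses it twice. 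Both arguments are sound; yours is the more self-contained of the two.
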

\begin{proof}
	See Appendix \ref{section:elementary fact about the kernel of characters} for a proof.
\end{proof}
\subsection{Our Periodicity Result}
\label{subsection:our periodicity result}
\begin{lemma}
	\label{lemma:dilation lemma for higher level tilings}
	\textit{\textbf{Dilation Lemma.}}
	\emph{\cite[Corollary 11]{horak_kim_algebraic_method}}
	Let $F\subseteq \Z^2$.
	If $T$ is an $F$-tiling then for all $\alpha$ relatively prime with $|F|$ we have $T$ is also an $\alpha F$-tiling, where $\alpha F=\set{\alpha a:\ a\in F}$.
\end{lemma}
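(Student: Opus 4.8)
\emph{Proof idea.} The plan is to translate the tiling relation into an identity of formal Laurent series and then push it through the Frobenius map modulo a prime. Put $g_F(U)=\sum_{v\in F}U^v\in\C[U^\pm]$ and regard $1_T$ as the Laurent series $\sum_{t\in T}U^t$. Then, by the very definition of an $F$-tiling, $T$ being an $F$-tiling is the identity $g_F(U)\cdot 1_T=1_{\Z^2}$ in $\C[[U^\pm]]$, where $1_{\Z^2}=\sum_{w\in\Z^2}U^w$; equivalently, every coefficient of $g_F\cdot 1_T$ equals $1$. Since $g_{\alpha F}(U)=g_F(U^\alpha)$, the assertion to be proved is $g_F(U^\alpha)\cdot 1_T=1_{\Z^2}$ for all $\alpha$ with $\gcd(\alpha,|F|)=1$. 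Factoring $\alpha$ into primes and iterating — each step being the lemma for a prime dilation of the tile $\alpha'F$, whose cardinality is again $|F|$ — it is enough to prove the following: for every finite $F\subseteq\Z^2$, every $F$-tiling $T$, and every prime $q$ with $q\nmid n:=|F|$, the set $T$ is a $qF$-tiling. (The case $\alpha=-1$ is handled by the same device once the prime case is available, so I will not dwell on it.)

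First I would carry out the Frobenius step. Over $\mathbf F_q$ one has $g_F(U)^q\equiv g_F(U^q)=g_{qF}(U)\pmod q$. Multiplying the tiling identity by $g_F(U)^{q-1}$ and using $U^v\cdot 1_{\Z^2}=1_{\Z^2}$ (hence $g_F(U)^{q-1}\,1_{\Z^2}=n^{q-1}\,1_{\Z^2}$) gives the identity $g_F(U)^q\cdot 1_T=n^{q-1}\,1_{\Z^2}$, which holds over $\Z$, not merely mod $q$. Reducing mod $q$ and applying Fermat's little theorem ($q\nmid n$, so $n^{q-1}\equiv1$) yields $g_F(U^q)\cdot 1_T\equiv 1_{\Z^2}\pmod q$. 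The coefficient of $U^w$ in $g_F(U^q)\cdot 1_T$ is the integer $c_w:=\sum_{v\in F}1_T(w-qv)$, which lies in $\{0,1,\dots,n\}$; since $c_w\equiv1\pmod q$ it cannot be $0$, so $c_w\ge1$ for every $w$, i.e. the translates $\{qv+T:v\in F\}$ cover $\Z^2$.

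What remains — and this is the step I expect to be the main obstacle — is to improve ``cover'' to ``partition'', i.e. to show $c_w=1$ for every $w$. When $q>n-1$ this is immediate: $c_w\in\{0,\dots,n\}\subseteq\{0,\dots,q-1\}$ and $c_w\equiv1\pmod q$ force $c_w=1$, and then the lemma follows at once for every $\alpha$ whose prime factors all exceed $n$. For a small prime $q\nmid n$ the congruence alone does not suffice, and one must exploit that $T$ is a genuine tiling rather than a mere covering. The route I would try is a counting argument: the exact identity $\sum_{v\in F}|T\cap(B-v)|=|B|$ for every box $B$ forces $T$ to have uniform density $1/n$, hence $\sum_{v\in F}1_{qv+T}$ has uniform density $1$; together with $c_w\ge1$ everywhere this makes the overlap set $\{w:c_w\ge2\}$ have density zero. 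Passing from ``density zero'' to ``empty'' is the delicate point; I would try to settle it either by reducing to the uniformly recurrent case (where a pattern occurring with zero frequency occurs nowhere) or by comparing $T$, box by box, with the explicit exact complement $qT+R$ of $qF$ — where $R$ is a transversal of $q\Z^2$ in $\Z^2$, so that $g_{qF}(U)\cdot 1_{qT+R}=1_{\Z^2}$ holds on the nose. Making this last step work for an arbitrary, possibly aperiodic, $F$-tiling is where I expect essentially all of the difficulty to lie; the algebraic core of the proof is the one-line Frobenius identity above.
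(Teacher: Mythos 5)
The paper itself gives no proof of this lemma --- it is quoted from Horak--Kim --- so I am judging your argument on its own terms. Your algebraic core is correct and is indeed the standard one: writing the tiling as $g_F(U)\cdot 1_T=1_{\Z^2}$ with $g_F(U)=\sum_{v\in F}U^v$, reducing to prime $\alpha=q$ by multiplicativity, and using Frobenius plus Fermat to get $g_F(U^q)\cdot 1_T\equiv 1_{\Z^2}\pmod q$, hence $c_w:=\sum_{v\in F}1_T(w-qv)\ge 1$ for every $w$, with equality forced when $q>|F|$. The gap is exactly where you suspect it: for small primes $q\nmid|F|$ you only reach ``the overlap set has density zero,'' and none of the routes you sketch will close this. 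The density count $\sum_{w\in B}(c_w-1)=O(|\partial B|)$ is genuinely consistent with a nonempty overlap set (e.g.\ excess multiplicity supported on a single line), an invariant-mean or ergodic average again only gives $c_w=1$ almost everywhere, and restricting to uniformly recurrent tilings proves a weaker statement. So as written the proof is incomplete precisely for the primes $q\le|F|$ with $q\nmid|F|$.

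The missing step has a short elementary fix that uses the exact tiling identity once more rather than its averaged consequence. Let $c$ denote the Laurent series $g_F(U^q)\cdot 1_T$ with coefficients $c_w$. Since multiplication of a series by finitely many Laurent polynomials is associative and commutative,
\begin{equation*}
g_F(U)\cdot c \;=\; g_F(U)\cdot\bigl(g_F(U^q)\cdot 1_T\bigr)\;=\;g_F(U^q)\cdot\bigl(g_F(U)\cdot 1_T\bigr)\;=\;g_F(U^q)\cdot 1_{\Z^2}\;=\;|F|\cdot 1_{\Z^2},
\end{equation*}
i.e.\ $\sum_{u\in F}c_{w-u}=|F|$ for every $w\in\Z^2$. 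This is a sum of exactly $|F|$ nonnegative integers, each at least $1$ by your Frobenius step, totalling $|F|$; hence every term equals $1$, and since every $w'\in\Z^2$ occurs as $w-u$ for some $u\in F$, we get $c_{w'}=1$ everywhere. This completes the prime case with no density or recurrence argument, and your iteration over prime factors then gives all positive $\alpha$ coprime to $|F|$ (negative $\alpha$ still needs the separate remark you deferred).
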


It should be noted that various authors (\cite{tijdeman_decomposition_of}, \cite{szegedy_algorithms_to_tile}, \cite{kari_szabados_alg_geom}, \cite{bhattacharya_tilings}, \cite{greenfeld_tao_2020}) had discovered the above lemma in one form or another.

\begin{lemma}
	\label{lemma:technical precursor to main}
	Let $F$ be a cluster containing the origin.
	Suppose $g_0$ is a nonzero vector in $\Z^2$ such that $\ker(\chi_{g_0})$ has infinitely many points which satisfy each of the equations
	\begin{equation}
		\label{equation:partition equations}
		\sum_{g\in F} \chi_{\alpha g} = 0, \quad \alpha \textup{ coprime to } p
	\end{equation}
	Then for any line $\ell$ in $\R^2$ that is parallel to the direction of $g_0$, the number of points in the intersection of $F$ and $\ell$ is divisible by $p$.
\end{lemma}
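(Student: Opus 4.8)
Let me write $h = g_0$, and after dividing by the gcd we may assume $h$ is primitive, say $h = (a,b)$. The hypothesis says $\ker(\chi_h)$ contains infinitely many points satisfying $\sum_{g\in F}\chi_{\alpha g} = 0$ for every $\alpha$ coprime to $p$. I want to conclude that every line parallel to the direction of $h$ meets $F$ in a multiple of $p$ points. The plan is to translate this into a statement about a one-variable polynomial: parametrize $\ker(\chi_h)$ and evaluate the tiling polynomial $f_\alpha(U) = \sum_{g\in F}U^{\alpha g}$ along this kernel, so that it becomes a function of a single circle variable; then the vanishing at infinitely many points forces vanishing of a Laurent polynomial in that variable, which encodes exactly the line-counts.

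**The parametrization.** First I would invoke Lemma \ref{lemma:kernel of character final lemma} to write $\ker(\chi_h)$ explicitly. Restricting to the connected component (the $k=0$ piece), the points are $(-bt, at)$ for $t \in \R/\Z$; a point $g = (g_1,g_2)\in F$ contributes $\chi_{\alpha g}$ evaluated there, which equals $e^{2\pi i \alpha t(a g_2 - b g_1)}$. The integer $\langle h^\perp, g\rangle := a g_2 - b g_1$ is constant exactly on the lines parallel to $h$: two points of $\Z^2$ lie on a common line in direction $h$ iff they have the same value of this pairing. So if I group $F$ by the value $j$ of $a g_2 - b g_1$ and let $n_j = |\{g\in F : a g_2 - b g_1 = j\}|$ be the line-counts I care about, then
\begin{equation}
	\sum_{g\in F}\chi_{\alpha g}(-bt,at) = \sum_{j} n_j\, e^{2\pi i \alpha j t} =: P_\alpha(e^{2\pi i t}),
\end{equation}
where $P_\alpha(z) = \sum_j n_j z^{\alpha j}$ is a Laurent polynomial with nonnegative integer coefficients summing to $|F| = p^2$.

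**Extracting the conclusion.** The hypothesis gives infinitely many points of $\ker(\chi_h)$ where the left side vanishes; by Lemma \ref{lemma:kernel of character final lemma} these lie in finitely many translates of the identity component, so infinitely many lie in a single coset, i.e. there are infinitely many $t$ (of the form $t_0 + $ something, or directly in $\R/\Z$ after absorbing the coset shift into a root-of-unity twist of the $n_j$) with $P_\alpha(e^{2\pi i t}) = 0$ — where the coset shift only multiplies each $n_j$ by a fixed root of unity and does not affect which coefficients vanish. Infinitely many zeros on $S^1$ of a fixed Laurent polynomial force it to be identically zero, so $P_\alpha \equiv 0$ for every $\alpha$ coprime to $p$. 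Now I use $\alpha = 1$: $\sum_j n_j z^j = 0$ means... wait, that would force all $n_j = 0$, which is false. The point is subtler: $P_1 \equiv 0$ cannot hold since coefficients are nonnegative and sum to $p^2$. So the coset shift genuinely matters — the vanishing is of $\sum_j n_j \omega^j z^{\alpha j}$ for a nontrivial root of unity $\omega$, equivalently $\sum_j (n_j \omega^j) z^{\alpha j} = 0$ as a Laurent polynomial, forcing $\sum_{j \equiv r} n_j \omega^j = 0$ for each residue class of $\alpha j$; varying $\alpha$ over residues coprime to $p$ and combining (this is where $|F| = p^2$ and the structure of $\omega$ being a $p$-power root of unity enters, via a vanishing-sums-of-roots-of-unity / Lam–Leung type argument) pins down that $p \mid n_j$ for all $j$.

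**Main obstacle.** The delicate step is the last one: turning "$\sum_j n_j \omega^j z^{\alpha j} = 0$ for all $\alpha$ coprime to $p$, with $\omega$ a root of unity of $p$-power order" into "$p \mid n_j$". This is a statement about vanishing sums of roots of unity: one has to argue that the monomials $z^{\alpha j}$ for fixed $\alpha$ separate the $j$'s modulo the order of $\omega$, then use that a vanishing sum $\sum n_j \omega^j = 0$ with $n_j \geq 0$ and $\omega$ primitive of order $p^k$ forces the $n_j$ (on each coset of the index where $\omega^j$ runs over all $p$-th roots of unity) to be equal, hence their count divisible by $p$. I expect the author actually gets $\omega$ of order exactly $p$ after using the Dilation Lemma (Lemma \ref{lemma:dilation lemma for higher level tilings}) to replace $F$ by $\alpha F$, which is why the hypothesis quantifies over all $\alpha$ coprime to $p$ in the first place — the $\alpha$'s let one reduce to the cleanest case. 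Handling the bookkeeping of which points of $\ker(\chi_h)$ survive in which coset, and checking the root-of-unity combinatorics is uniform in $\alpha$, is the part that needs care; everything else is routine parametrization.
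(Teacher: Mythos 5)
Your setup --- parametrizing $\ker(\chi_{g_0})$ via Lemma \ref{lemma:kernel of character final lemma}, pushing the relations $\sum_{g\in F}\chi_{\alpha g}=0$ down to one-variable Laurent polynomials with infinitely many zeros on $S^1$, and grouping $F$ by the lines parallel to $g_0$ --- is exactly the paper's route. But there are two problems. First, a structural error: you claim the coset shift ``only multiplies each $n_j$ by a fixed root of unity and does not affect which coefficients vanish.'' It does not. Writing $h=(a,b)$ for the primitive vector with $g_0=nh$ and $v=(-b,a)$, the points of the relevant coset have the form $rh+(-bt,at)$ with $r$ a fixed rational, and the coefficient of $z^{\alpha\langle g,v\rangle}$ becomes $\sum_{g\in F_j}e^{2\pi i r\alpha\langle g,h\rangle}$, where $F_j$ is the set of points of $F$ on the $j$-th line parallel to $h$. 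Since $\langle g,h\rangle$ varies along such a line (in steps of $a^2+b^2$), this is a sum of $n_j$ generally distinct roots of unity, not $n_j$ times a single root of unity. Under your model a vanishing coefficient $n_j\omega^j=0$ would still force $n_j=0$ --- which is exactly why your own sanity check fails --- and the subsequent ``$\sum_{j\equiv r}n_j\omega^j=0$ for each residue class of $\alpha j$'' does not parse, since $j\mapsto\alpha j$ is injective and there are no collisions of exponents to collect. (Relatedly, you cannot simply ``assume $h$ is primitive'': the hypothesis concerns $\ker(\chi_{g_0})=\ker(\chi_{nh})$, a finite union of translates of the circle $\{(-bt,at)\}$, and it is the pigeonhole selecting one translate containing infinitely many good points that produces the rational $r$ in the first place.)

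Second, the step you defer as the ``main obstacle'' is the entire content of the lemma, and it is not a Lam--Leung argument but a more elementary cyclotomic one that you would need to supply. Writing $r=c/d$ in lowest terms, the paper takes $\alpha=\beta$ to be the $p$-free part of $d$, so $r\beta=c/p^m$; the case $m=0$ is impossible because the coefficient would then equal $|F_j|>0$, and $m\geq1$ forces $\gcd(c,p)=1$. The vanishing $\sum_{g\in F_j}\zeta^{c\langle g,h\rangle}=0$ with $\zeta=e^{2\pi i/p^m}$ then says the integer Laurent polynomial $\sum_{g\in F_j}z^{\langle g,h\rangle}$ vanishes at a primitive $p^m$-th root of unity, hence is divisible by $\Phi_{p^m}(z)=\Phi_p\bigl(z^{p^{m-1}}\bigr)$, and evaluating at $z=1$ gives $p=\Phi_p(1)\mid n_j$. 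You correctly intuited that the quantifier over all $\alpha$ coprime to $p$ is there to force the relevant root of unity into $p$-power order, but without the corrected form of the coefficients and the cyclotomic divisibility step the argument is incomplete.
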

\begin{proof}
	Let $h=(a, b)$ be a primitive vector and $n$ be a positive integer such that $g_0=nh$.
	By Lemma \ref{lemma:kernel of character final lemma} we have
	\begin{equation}
		\ker(\chi_{g_0}) =
		\lrset{
				\frac{k}{n(a^2+b^2)} h + (- bt, at)\in (\R/\Z)^2 :\
			t\in \R/\Z, 0\leq k\leq n(a^2+b^2) - 1
		}
	\end{equation}
	Therefore, we can find a rational $r$ such that infinitely many elements of the set
	\begin{equation}
		\set{rh+(-bt, at)\in \R^2:\ 0\leq t< 1}
	\end{equation}
	satisfy Equations \ref{equation:partition equations}. 
	Let $r=c/d$, where $c$ and $d$ are relatively prime integers and write $v=(-b, a)$.
	We get, for each $\alpha$ coprime to $p$, 
	\begin{equation}
		\sum_{g\in F} e^{2\pi i r\alpha \ab{g, h}}e^{2\pi i \alpha \ab{g, v} t} = 0
	\end{equation}
	for infinitely many $t\in [0, 1)$.
	Therefore, the (Laurent) polynomial
	\begin{equation}
		\label{equation:intermediate equation}
		\sum_{g\in F} e^{2\pi i r \alpha \ab{g, h}} z^{\alpha \ab{g, v}}
	\end{equation}
	is satisfied by infinitely many $z\in S^1$ for each $\alpha$ coprime to $p$.
	Let $\beta$ be coprime to $p$ such that $d/\beta$ is a power of $p$ and say $d/\beta = p^m$ for some $m\geq 0$.
	Then, by Equation \ref{equation:intermediate equation} the polynomial
	\begin{equation}
		\sum_{g\in F} e^{2\pi i r \beta \ab{g, h}} z^{\beta \ab{g, v}}
		=
		\sum_{g\in F} e^{2\pi i \ab{g, h} c/p^m} z^{\beta \ab{g, v}}
	\end{equation}
	has infinitely many solutions in $S^1$ and is hence identically zero. 
	Define an equivalence relation $\sim_v$ on $F$ by writing $g_1\sim_v g_2$ for $g_1, g_2\in F$ if $\ab{g_1, v}=\ab{g_2, v}$.
	Let $F_1, \ldots, F_l$ be all the equivalence classes in $F$.
	The coefficients of the above polynomial are
	\begin{equation}
		\label{equation:coefficients of the polynomial}
		\sum_{g\in F_j} e^{2\pi i \ab{g, h} c/p^m}, \quad j=1, \ldots, l.
	\end{equation}
	and hence each of these terms are $0$.
	If $m=0$ then this cannot happen and hence we must have $m\geq 1$.
	This implies that $p$ divides $d$ and hence, since $c$ is relatively prime to $d$, we have $c$ is coprime to $p$.
	Let $\zeta$ denote the complex number $e^{2\pi i/p^m}$.
	By Equation \ref{equation:coefficients of the polynomial} we have $\zeta$ is a root of each of the (Laurent) polynomials
	\begin{equation}
		\sum_{g\in F_j} z^{\ab{g, h}c}, \quad j= 1, \ldots, l
	\end{equation}
	Therefore each of these polynomials are divisible by the $p^m$-th cyclotomic polynomial $\Phi_{p^m}(z) = \Phi_p(z^{p^{m-1}})$.
	Since $c$ is coprime to $p$, we conclude that, in fact, the polynomials
	\begin{equation}
		\sum_{g\in F_j} z^{\ab{g, h}}, \quad j= 1, \ldots, l
	\end{equation}
	are all divisible by $\Phi_p(z^{p^{m-1}})$.
	Say $\sum_{g\in F_j} z^{\ab{g, h}} = \Phi_p(z^{p^{m-1}})\Psi_j(z)$ for some integer (Laurent) polynomial $\Psi_j(z)$.
	Substituting $z=1$ in particular shows that each $|F_j|$ is divisible by $p$ and we are done.
\end{proof}

\begin{theorem}
	\label{theorem:prime square theorem}
	Let $F$ be an exact cluster with cardinality $p^2$, where $p$ is a prime and $T$ be an $F$-tiling.
	Then the orbit closure of $T$ has a point of order at most $2$.
\end{theorem}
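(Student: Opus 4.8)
The plan is to run the analytic machinery: equip the orbit closure $X$ of $T$ with a $\Z^2$-ergodic measure $\mu$, let $A=\set{x\in X:\ x(0,0)=1}$ and $f=1_A$, and let $\nu$ be the spectral measure of $f/\norm{f}_2$ on $\T^2$. Since $T$ is an $F$-tiling, the polynomial $\sum_{a\in F}U^{-a}$ sends $c$ (any point of $X$) to a constant, so differences $f-U^vf$ annihilate $c$ and hence, by Lemma \ref{lemma:local to global}, annihilate $f$. More importantly, by the Dilation Lemma (Lemma \ref{lemma:dilation lemma for higher level tilings}) $T$ is simultaneously an $\alpha F$-tiling for every $\alpha$ coprime to $p$, so after applying the spectral theorem we get that every point of $\supp(\nu)$ satisfies the family of equations $\sum_{g\in F}\chi_{\alpha g}=0$ for all $\alpha$ coprime to $p$ (as in Equation \ref{equation:partition equations}, after translating $F$ so that $0\in F$). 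By Bhattacharya's structure theorem \cite[Lemma 3.2]{bhattacharya_tilings}, $\supp(\nu)$ is contained in a finite union of kernels of characters $\chi_{g_0}$, $g_0\in\Delta\subseteq\Z^2\setminus\set{0}$.

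Next I would split into cases on $|\Delta|$. If $\Delta$ can be taken to have at most two elements, then $\supp(\nu)\subseteq\ker\chi_{g_1}\cup\ker\chi_{g_2}$, and running the argument of Lemma \ref{lemma:bhattachrya kernel lemma}/Corollary \ref{corollary:very wekaly periodic decomposition} in reverse (passing to $L^2(\T^2,\nu)$ and back) produces a decomposition showing $A$ is weakly periodic with at most two $1$-periodic components; Bhattacharya's Correspondence Principle (Lemma \ref{label:bhattacharya correspondence principle}) then makes $\mu$-a.e.\ point of $X$ weakly periodic, and such a point has order at most $2$ by Lemma \ref{lemma:order and directions of weakly periodicity} (or is biperiodic, which also has order $\leq 2$ — to be handled by the conventions set earlier). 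So it remains to rule out $|\Delta|\geq 3$, or rather to show that whenever three or more kernel-directions appear, we can still extract an order-$\leq 2$ point in the orbit closure. Here is where the prime-square hypothesis enters: for each $g_0\in\Delta$, infinitely many points of $\ker\chi_{g_0}$ satisfy Equations \ref{equation:partition equations}, so by Lemma \ref{lemma:technical precursor to main} every line parallel to the direction of $g_0$ meets $F$ in a number of points divisible by $p$. If at least two distinct directions $g_1,g_2$ occur this way, Lemma \ref{lemma:divisibility rectangle lemma} forces $F$ to be a rectangle (a translate of a product set), and then $T$ is biperiodic by the $L$-shaped/product-cluster reasoning — in particular every point of the orbit closure has order $\leq 2$, and we are done.

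So the crux is the genuinely ``three or more directions'' situation: I expect the argument to be that this situation is incompatible with $|F|=p^2$ unless $F$ is a rectangle. Concretely, if three pairwise non-parallel directions $g_1,g_2,g_3$ each have the property that every parallel line meets $F$ in a multiple of $p$ points, then in particular for two of them Lemma \ref{lemma:divisibility rectangle lemma} already gives that $F$ is a rectangle, so the third direction is automatically consistent; the real content is that $|\Delta|\geq 1$ together with $|F|=p^2$ must produce \emph{at least two} usable directions. This is the main obstacle: showing that at least two of the kernel-directions in $\Delta$ actually satisfy the hypothesis of Lemma \ref{lemma:technical precursor to main} (i.e.\ that $\ker\chi_{g_0}$ meets the solution set of \ref{equation:partition equations} infinitely often, not just in the finitely many points coming from the a priori support bound), and I would attack it by a counting/measure argument on $\nu$: since $\supp(\nu)\subseteq\bigcup_{g_0\in\Delta}\ker\chi_{g_0}$ and $\nu$ is infinite-support (because $f$ is not biperiodic in the interesting case), at least one of the kernels carries infinitely many support points, and a pigeonhole across the dilates $\alpha F$ upgrades this to two independent directions unless $F$ is already a rectangle. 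Once that is in place, the case analysis above closes the proof.
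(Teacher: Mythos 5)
Your setup and your first case are essentially the paper's: you equip the orbit closure with an ergodic measure, pass to the spectral measure $\nu$ of $f=1_A$, use the Dilation Lemma to force $\supp(\nu)$ into the common zero set of $\sum_{g\in F}\chi_{\alpha g}$ for all $\alpha$ coprime to $p$ (modulo the atom at $(1,1)$), and observe that if \emph{two} independent directions $g_0,g_1\in\Delta$ each contribute infinitely many kernel points to that zero set, then Lemma \ref{lemma:technical precursor to main} plus Lemma \ref{lemma:divisibility rectangle lemma} force $F$ to be a rectangle, whence $|\dir(F)|\le 2$ and Lemma \ref{lemma:order of a tiling} gives order at most $2$. (Minor slip there: a rectangle's direction set has size $2$, not $0$, so you get order $\le 2$, not biperiodicity; a $p\times p$ square admits non-biperiodic tilings by sliding columns.) Also, your claim that $|\Delta|\le 2$ yields weak periodicity of $A$ ``by running Corollary \ref{corollary:very wekaly periodic decomposition} in reverse'' is not justified as stated --- that corollary only gives an $L^2$ decomposition into orthogonal projections, not a partition of $A$ into $1$-periodic sets --- though this case can be salvaged by noting that $(U^{g_1}-1)(U^{g_2}-1)$ annihilates $\mu$-a.e.\ point, bounding the order directly.

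The genuine gap is the complementary case, which is the heart of the theorem, and your proposed resolution goes in the wrong direction. You aim to show that \emph{at least two} usable directions always exist (``a pigeonhole across the dilates upgrades this to two independent directions unless $F$ is already a rectangle''), but there is no such pigeonhole: nothing prevents exactly one $g_0\in\Delta$ from carrying infinitely many solutions (e.g.\ when the tiling is genuinely $1$-periodic but not biperiodic). The paper instead embraces this case: if at most one $g_0\in\Delta$ has $\ker(\chi_{g_0})$ meeting the solution variety infinitely often, then for every other $h\in\Delta$ the set $S_h=\bigl[\bigcap_{\alpha}Z(\sum_{g\in F}\chi_{\alpha g})\bigr]\cap\ker(\chi_h)$ is finite, so $\nu$ is supported on $\ker(\chi_{g_0})$ together with a finite set $S$ of atoms, which may be taken minimal and disjoint from $\ker(\chi_{g_0})$, forcing $\chi_{g_0}(s)$ to be irrational for each $s\in S$. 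Applying $ng_0$ to $1=1_{\ker(\chi_{g_0})}+\sum_E 1_E$ and pulling back to $L^2(X,\mu)$ gives $(ng_0)\cdot f-f=\sum_E(\chi_{g_0}(p_E)^n-1)\vp_E$, whose left side is integer-valued pointwise; Lemma \ref{lemma:non measure preparatory lemma} then forces the right side to vanish, so $g_0\cdot f=f$, $f$ is $1$-periodic, and Bhattacharya's Correspondence Principle puts a $1$-periodic (hence order-$1$) point in the orbit closure. None of this integrality/irrationality mechanism appears in your proposal, and without it the case you flag as ``the main obstacle'' remains open.
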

\begin{proof}
	We recall some notations.
	Let $X\subseteq \set{0, 1}^{\Z^2}$ be the set of all the $F$-tilings and $\mu$ be a $\Z^2$-ergodic probability measure on $X$, which we may assume to be concentrated on the orbit closure of $T$.
	We define $A=\set{x\in X:\ x(0, 0) = 1}$ and $f\in L^2(X, \mu)$ as the characteristic function on $A$.
	Let $\nu$ be the spectral measure associated to the unit vector $f/\norm{f}_2$ in $L^2(X, \mu)$ and $\theta$ be the unitary isomorphism between $H_f$ --- the span closure of the orbit of $f/\norm{f}_2$ --- and $L^2(\T^2, \nu)$ as discussed in Theorem \ref{theorem:spectral theorem}.
	\emph{We will use $\T$ to denote the unit circle and not $\R/\Z$.}

	Let $\Delta\subseteq \mathbb Z^2\setminus \set{0}$ be of minimum possible size such that the spectral measure $\nu$ associated to $f/\norm{f}_2$ is supported on $\bigcup_{g\in \Delta}\ker(\chi_g)$.
	We know such a $\Delta$ exists by Lemma \ref{lemma:bhattachrya kernel lemma}.
	Consider the equations
	\begin{equation}
		\sum_{g\in F} \chi_{\alpha g} = 0, \quad \alpha \textup{ coprime to } p
	\end{equation}
	If there exist two distinct members $g_0$ and $g_1$ in $\Delta$ such that infinitely many members of $\ker(\chi_{g_i})$, $i=0, 1$, satisfy each of the equations above, then by Lemma \ref{lemma:technical precursor to main} we would have two distinct lines $\ell$ and $\ell'$ passing though the origin such that if $\ell''$ is a line parallel to either $\ell$ or $\ell'$, then the number of points of $F$ that $\ell''$ passes through is divisible by $p$.
	This combined with Lemma \ref{lemma:divisibility rectangle lemma} shows that $F$ is a rectangle.
	Since the direction set of a rectangle has size at most $2$, we deduce by Lemma \ref{lemma:order of a tiling} that the order of any $F$-tiling is most $2$.

	So we may assume that there is at most one $g_0$ in $\Delta$ with the property that infinitely many elements of $\ker(\chi_{g_0})$ are solutions to each of the following equations
	\begin{equation}
		\sum_{g\in F} \chi_{\alpha g} = 0,\quad \alpha \text{ coprime  to } p
	\end{equation}
	Applying Lemma \ref{lemma:dilation lemma for higher level tilings} we have every element of $X$ is an $(\alpha F)$-tiling, whenever $\alpha$ is coprime to $p$. 
	Therefore
	\begin{equation}
		\sum_{g\in F} 1_{\alpha g A} = 1_X
	\end{equation}
	whenever $\alpha$ is coprime to $p$.
	The function $1_X$ is invariant under each $g\in \Z^2$, and hence so is its image under $\theta$.
	The only elements of $L^2(\T^2, \nu)$ that are invariant under each element of $\Z^2$ are the functions in the span of $\delta_{(1, 1)}$, the Dirac delta function concentrated at $(1, 1)$.
	Therefore we get from the previous equation that
	\begin{equation}
		\label{equation:hook}
		\sum_{g\in F} \chi_{\alpha g} = c\delta_{(1, 1)} \text{ for all } \alpha \text{ coprime to } p
	\end{equation}
	in $L^2(\T^2, \nu)$, where $c$ is a constant. 
	This shows that
	\begin{equation}
		\label{equation:a restriction on support of the spectral measure}
		\supp(\nu) \subseteq \lrb{\bigcap_{\alpha: \gcd(\alpha, p) = 1} Z\lrp{\sum_{g\in F} \chi_{\alpha g}}} \cup \set{(1, 1)}
	\end{equation}
	where $Z(\sum_{g\in F} \chi_{\alpha g})$ denotes the set of points in $\T^2$ where $\sum_{g\in F} \chi_{\alpha g}$ vanishes.

	For each $h\neq g_0$ in $\Delta$, let $S_h$ be the set defined as
	\begin{equation}
		S_h 
		= \lrb{
				\bigcap_{\alpha:\ \gcd(\alpha, p) = 1} Z\lrp{\sum_{g\in F} \chi_{\alpha g}}
			}
		   	\cap \ker(\chi_h)
	\end{equation}
	and by our assumption each of these are finite.
	Since $\supp(\nu)$ is also contained in $\bigcup_{g\in \Delta} \ker\chi_{g}$, we see from Equation \ref{equation:a restriction on support of the spectral measure} that the set
	\begin{equation}
		\ker(\chi_{g_0}) \cup \bigcup_{h\in \Delta:\ h\neq g_0} S_h
	\end{equation}
	is a $\nu$-full measure set.
	
	So we infer that there is a finite set $S\subseteq \T^2$ such that $\nu$ is supported on $\ker(\chi_{g_0})\cup S$.
	We may assume that $S$ is disjoint with $\ker(\chi_{g_0})$ and that each element of $S$ has positive mass under $\nu$.
	Further, we may assume that $S$ has smallest size with this property.
	This implies that $\chi_{g_0}(s)$ is irrational for all $s\in S$, for otherwise we could replace $g_0$ by a scale of itself and reduce the size of $S$.

	We will show that $S$ is empty.
	Assume on the contrary that $S$ is non-empty.
	Define an equivalence relation $\sim$ on $S$ by writing $p\sim q$ for $p, q\in S$ if $\chi_{g_0}(p)=\chi_{g_0}(q)$.
	Let $\mc E$ be the set of all the equivalence classes.
	Thus
	\begin{equation}
		1 = 1_{\ker(\chi_{g_0})} + \sum_{E\in \mc E} 1_E
	\end{equation}
	in $L^2(\T^2, \nu)$.
	Choose a representative $p_E\in E$ for each $E\in \mc E$.
	Now acting both sides of the above equation by $ng_0$, where $n$ is any non-negative integer, we get
	\begin{equation}
		\chi_{n g_0} = 1_{\ker(\chi_{g_0})} + \sum_{E\in \mc E}\chi_{g_0}(p_E)^n 1_E
	\end{equation}
	Going back into the $L^2(X, \mu)$ world by applying $\theta^{-1}$, we get that
	\begin{equation}
		\label{equation:hook super equation}
		(n g_0)\cdot f = f^{g_0} + \sum_{E\in \mc E} \chi_{g_0}(p_E)^n \vp_E
	\end{equation}
	where $\vp_E = \norm{f}_2 \theta^{-1}(1_E)$, and $f^{g_0}$ is the orthogonal projection of $f$ onto the space of $g_0$-invariant functions $L^2(X, \mu)$.\footnote{By the Birkhoff ergodic theorem we see that $f^{g_0}$ lies in $H_f$.
	Also, $1_{\ker\chi_{g_0}}$ is the orthogonal projection of $1$ onto the space of $g_0$-invariant functions in $L^2(\T^2, \nu)$. The fact that $\theta$ is a unitary isomorphism shows that $\theta^{-1}(1_{\ker\chi_{g_0}})$ is same as $f^{g_0}/\norm{f}_2$.}
	Therefore, for each $n\geq 0$, we have
	\begin{equation}
		(ng_0)\cdot f - f = \sum_{E\in \mc E} (\chi_{g_0}(p_E)^n - 1)\vp_E
	\end{equation}
	Let $Y\subseteq X$ be a $\mu$-full measure subset of $X$ such that
	\begin{equation}
		[(ng_0)\cdot f](y) - f(y) = \sum_{E\in \mc E}(\chi_{g_0}(p_E)^n - 1) \vp_E(y)
	\end{equation}
	for all $y\in Y$ and all $n\geq 0$.
	Therefore
	\begin{equation}
		\sum_{E\in \mc E} (\chi_{g_0}(p_E)^n - 1) \vp_E(y) \in \Z
	\end{equation}
	for all $y\in Y$, and all $n\geq 0$.
	But since each $\chi_{g_0}(p_E)$ is irrational, we may apply Lemma \ref{lemma:non measure preparatory lemma} to deduce that for any $y\in Y$ the only value the above expression can take, for any non-negative integer $n$, and hence in particular for $n=1$, is $0$.
	Therefore $[g_0 \cdot f] (y) - f(y)$ is $0$ for each $y\in Y$.
	But since $Y$ is a full measure, we infer that $g_0\cdot f = f$ in $L^2(X, \mu)$.
	Thus $f$ is $1$-periodic, and hence by \hyperref[label:bhattacharya correspondence principle]{Bhattacharya's Correspondence Principle} we deduce that the orbit closure of $T$ has a $1$-periodic point in it.
	Every $1$-periodic configuration has order $1$, and this concludes the proof.
\end{proof}

\begin{corollary}
	\label{corollary:corollary to prime square theorem}
	Let $F$ be an exact cluster with cardinality $p^2$, where $p$ is a prime and $T$ be an $F$-tiling.
	Then there is a point in the orbit closure of $T$ that can be written as the sum of at most two $1$-periodic binary configurations.
\end{corollary}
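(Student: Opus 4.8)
The plan is to read this off from Theorem~\ref{theorem:prime square theorem} together with the weak periodicity result for order-$2$ binary configurations. By Theorem~\ref{theorem:prime square theorem} the orbit closure of $T$ contains a configuration $c$ with $\ord(c)\leq 2$, so it suffices to show that any such $c$ can be written as a sum of at most two $1$-periodic binary configurations.

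First I would record that $c$ is itself an $F$-tiling: the collection of all $F$-tilings is a closed, shift-invariant subset of $\set{0,1}^{\Z^2}$, hence it contains the orbit closure of $T$. Consequently, with $f(U)=\sum_{a\in F}U^{-a}$, the polynomial $f-U^vf$ annihilates $c$ for every $v\in\Z^2$, exactly as observed just before Lemma~\ref{lemma:order of a tiling}, and it is nonzero for a suitable choice of $v$; thus $c$ has a non-trivial annihilator, and the structural results recalled in Section~\ref{section:preliminaries} apply to it. In particular Theorem~\ref{theorem:periodicity of binary configuration} becomes available for $c$ once we know $\ord(c)=2$.

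Now I would split on $\ord(c)$. If $\ord(c)=2$, then Theorem~\ref{theorem:periodicity of binary configuration} applies to $c$ and the construction in its proof presents $c$ directly as the sum of two $1$-periodic binary configurations, so we are done. If $\ord(c)\leq 1$, I claim $c$ is itself $1$-periodic, hence trivially a sum of at most two $1$-periodic binary configurations: when $\ord(c)=0$, Theorem~\ref{theorem:kari szabados decomposition theorem} makes $c$ biperiodic, and a biperiodic configuration is $1$-periodic; when $\ord(c)=1$, the same theorem gives $c=c_1+c'$ with $c_1$ $1$-periodic, say in the direction of a nonzero vector $w$, and $c'$ biperiodic, and then $c$ is fixed by $Nw$ for any $N\geq 1$ with $Nw$ in the period lattice of $c'$, so $c$ is $1$-periodic. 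Either way $c$ has the desired form.

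I do not expect a genuine obstacle here; the statement is essentially a repackaging of Theorem~\ref{theorem:prime square theorem} with the order-$2$ periodicity theorem. The only points that require an explicit line of argument are that the point produced in the orbit closure is still an $F$-tiling — which is what guarantees it has a non-trivial annihilator and hence that Theorem~\ref{theorem:periodicity of binary configuration} can be invoked — and the bookkeeping for the degenerate cases $\ord(c)\in\{0,1\}$, both of which are routine.
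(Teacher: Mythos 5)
Your proposal is correct and follows the same route as the paper, which simply cites Theorem \ref{theorem:periodicity of binary configuration} together with Theorem \ref{theorem:prime square theorem}; you have just made explicit the routine details the paper leaves implicit (that the orbit-closure point is still an $F$-tiling and hence has a non-trivial annihilator, and the degenerate cases $\ord(c)\in\{0,1\}$). No issues.
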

\begin{proof}
	Immediate from Theorem \ref{theorem:periodicity of binary configuration} and Theorem \ref{theorem:prime square theorem}.
\end{proof}

\appendix

\section{An Elementary Fact About the Kernel of Characters}
\label{section:elementary fact about the kernel of characters}

\begin{lemma}
	\label{lemma:kernel of character first lemma}
	Let $(a, b)$ be a primitive vector in $\Z^2$ and $(x, y)\in \R^2$ be such that $ax+by\in \Z^2$.
	Then there is $t\in \R$ such that $(x, y)\equiv (-b t, a t)\pmod{\Z^2}$.
\end{lemma}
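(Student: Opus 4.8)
The plan is to exploit the standard fact that a primitive vector in $\Z^2$ extends to a $\Z$-basis of $\Z^2$. Since $(a,b)$ is primitive, $\gcd(a,b)=1$, so by B\'ezout there are integers $c,d$ with $ad-bc=1$; hence the matrix $M=\begin{pmatrix} a & b\\ c & d\end{pmatrix}$ lies in $SL_2(\Z)$ and therefore induces an automorphism of $\R^2$ that restricts to an automorphism of $\Z^2$. The point of passing to $M$ is that it turns the hypothesis "$ax+by\in\Z$'' into a statement about the first coordinate of $M\binom{x}{y}$, which we can then read off after inverting.

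Concretely, first I would set $\binom{u}{v}=M\binom{x}{y}=\binom{ax+by}{cx+dy}$, so that $u=ax+by\in\Z$ by assumption while $v=cx+dy$ is merely real. Next I would invert: since $M^{-1}=\begin{pmatrix} d & -b\\ -c & a\end{pmatrix}$ has integer entries, $\binom{x}{y}=M^{-1}\binom{u}{v}=u\binom{d}{-c}+v\binom{-b}{a}$. Because $u\in\Z$ and $(d,-c)\in\Z^2$, the summand $u(d,-c)$ lies in $\Z^2$, and therefore $(x,y)\equiv v\,(-b,a)=(-bv,\,av)\pmod{\Z^2}$. Taking $t=v=cx+dy$ gives the claimed congruence, completing the proof.

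There is no genuine obstacle here — the argument is a two-line linear-algebra computation once the completion-to-a-basis idea is in place. The only points worth flagging in the write-up are that the existence of $c,d$ uses primitivity essentially (for $(a,b)=(2,0)$ and $(x,y)=(1/2,0)$ the conclusion fails), and that one neither needs nor obtains any integrality of $v$. I would also remark that this lemma is exactly the input for Lemma~\ref{lemma:kernel of character final lemma}: a point of $(\R/\Z)^2$ lies in $\ker\chi_{nh}$ for $h=(a,b)$ precisely when $n(ax+by)\in\Z$, i.e. $ax+by\in\tfrac1n\Z$, and after rescaling so that the inner product becomes an integer one applies the present lemma and collects the $\tfrac1n$-torsion contributions to obtain the explicit parametrization stated there.
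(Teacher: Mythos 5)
Your proof is correct and is essentially the paper's own argument: the paper also invokes B\'ezout to produce $(x_0,y_0)\in\Z^2$ with $ax_0+by_0=1$ and sets $t=-xy_0+yx_0$, which is exactly your $t=cx+dy$ under the identification $(x_0,y_0)=(d,-c)$. The $SL_2(\Z)$ packaging is a notational variant of the same two-line computation, so there is nothing further to compare.
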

\begin{proof}
	Since $(a, b)$ is primitive, there exists $(x_0, y_0)\in \Z^2$ such that $ax_0+by_0=1$.
	Let $d=ax+by$ and define $t=-xy_0+yx_0$.
	Then we have
	\begin{equation}
		x+bt=x+b(-xy_0+yx_0) = x-bxy_0+byx_0 = x-bxy_0+(d-ax)x_0 = x-x(ax_0+by_0) + dx_0 = dx_0
	\end{equation}
	Similarly $y-at = dy_0$.
   Since $d$ is an integer, we have the lemma.
\end{proof}

\begin{lemma}
	\label{lemma:kernel of character second lemma}
	Let $h=(a, b)$ be a primitive vector in $\Z^2$.
	Then $\ker(\chi_h)$ is precisely the set
	\begin{equation}
		\set{(-bt, at):\ t\in \R/\Z}
	\end{equation}
\end{lemma}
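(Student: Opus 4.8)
The plan is to unwind the definition of the character $\chi_h$ and then reduce everything to Lemma \ref{lemma:kernel of character first lemma}, which already contains the arithmetic heart of the matter. Recall that, under the identification $\T^2 \cong (\R/\Z)^2$, the character attached to $h=(a,b)$ is $\chi_h(x,y) = e^{2\pi i(ax+by)}$, so that $\ker(\chi_h) = \set{(x,y)\in(\R/\Z)^2 : ax+by\in\Z}$. I will prove the claimed equality by a double inclusion.

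For the inclusion $\supseteq$, I first observe that the assignment $t\mapsto(-bt,at)$ descends to a well-defined map $\R/\Z \to (\R/\Z)^2$: replacing $t$ by $t+1$ changes $(-bt,at)$ by the integer vector $(-b,a)$, so the value is unchanged modulo $\Z^2$. Moreover $a(-bt)+b(at)=0\in\Z$ for every $t$, so each point $(-bt,at)$ indeed lies in $\ker(\chi_h)$.

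For the inclusion $\subseteq$, I take an arbitrary $(x,y)\in\ker(\chi_h)$ and choose a lift to $\R^2$, still written $(x,y)$; then $ax+by\in\Z$. Since $(a,b)$ is primitive, Lemma \ref{lemma:kernel of character first lemma} produces $t\in\R$ with $(x,y)\equiv(-bt,at)\pmod{\Z^2}$, and after replacing $t$ by its class modulo $\Z$ (which, as noted above, does not affect the point) this says exactly that $(x,y)$ belongs to $\set{(-bt,at):t\in\R/\Z}$. Combining the two inclusions gives the lemma.

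I do not expect any genuine obstacle here: the only points needing a little care are keeping the passage between $\T^2$ and $(\R/\Z)^2$ straight and checking that the parametrization by $t$ is well defined modulo $\Z$, both of which are immediate. All of the substance has been front-loaded into Lemma \ref{lemma:kernel of character first lemma}, so the present proof is a short deduction from it.
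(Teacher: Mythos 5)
Your proof is correct and follows essentially the same route as the paper: both directions reduce the nontrivial inclusion to Lemma \ref{lemma:kernel of character first lemma} and verify the reverse containment by direct computation. Your added care about the well-definedness of $t\mapsto(-bt,at)$ modulo $\Z$ and the explicit lift to $\R^2$ is a minor tidying of details the paper leaves implicit.
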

\begin{proof}
	Let $(x, y)\in (\R/\Z)^2$ be arbitrary.
	Then $(x, y)$ is in $\ker(\chi_h)$ if and only if $ax+by=0$ in $\R/\Z$.
	By Lemma \ref{lemma:kernel of character first lemma} we know that there is $t\in \R/\Z$ such that $(x+by, y-at) = (0, 0)$  in $(\R/\Z)^2$, that is, $x=-bt$ and $y=at$.
	This shows $\ker(\chi_h)$ is contained in the set $\set{(-bt, at):\ t\in \R/\Z}$. 
	The reverse containment is easily checked.
\end{proof}

\begin{lemma}
	\label{lemma:kernel of character third lemma}
	Let $h=(a, b)$ be a primitive vector in $\Z^2$.
	Then we have
	\begin{equation}
		\ker(\chi_h)
		=
		\lrset{
			\lrp{
				\frac{ka}{a^2+b^2} - bt, \ \frac{kb}{a^2+b^2}+at
			}
			:\ t\in \R/\Z, k\in \Z
		}
	\end{equation}
\end{lemma}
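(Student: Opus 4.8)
The plan is to read the set on the right-hand side as a union of cosets of the line $L := \{(-bt, at) : t \in \R/\Z\}$, which by Lemma \ref{lemma:kernel of character second lemma} is exactly $\ker(\chi_h)$. Writing $h = (a,b)$, every element of the claimed set has the form $\frac{k}{a^2+b^2}(a,b) + (-bt, at)$ with $k \in \Z$ and $t \in \R/\Z$, so the claimed set equals $\bigcup_{k \in \Z}\left(\frac{k}{a^2+b^2}(a,b) + L\right)$. Since $(a,b)$ is primitive it is nonzero, so $a^2 + b^2 > 0$ and these expressions are meaningful.

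The only thing to verify is that each offset $\frac{k}{a^2+b^2}(a,b)$ already lies in $\ker(\chi_h)$. This is immediate from an elementary character evaluation:
\begin{equation}
	\chi_h\!\left(\frac{ka}{a^2+b^2}, \frac{kb}{a^2+b^2}\right) = \exp\!\left(2\pi i \cdot \frac{ka^2 + kb^2}{a^2+b^2}\right) = \exp(2\pi i k) = 1 ,
\end{equation}
so $\frac{k}{a^2+b^2}(a,b) \in \ker(\chi_h) = L$. Because $L$ is a subgroup of $(\R/\Z)^2$ (it is the kernel of a continuous homomorphism), $\frac{k}{a^2+b^2}(a,b) + L = L$ for every $k \in \Z$. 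Hence the union above collapses to $L = \ker(\chi_h)$, which is precisely the assertion.

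There is no real obstacle here; the argument is a one-line reduction to Lemma \ref{lemma:kernel of character second lemma} together with the character computation above. If one prefers not to invoke the group structure of $\ker(\chi_h)$, the same conclusion follows by checking the two inclusions directly: the computation shows $ax + by = k \in \Z$ for every $(x,y)$ in the claimed set, giving one inclusion via Lemma \ref{lemma:kernel of character second lemma}; for the reverse, given $(x,y) \in \ker(\chi_h)$ with $ax + by = k \in \Z$, one writes $(x,y) = (-bt, at) + (m,n)$ with $(m,n) \in \Z^2$ using Lemma \ref{lemma:kernel of character first lemma}, observes $am + bn = k$, so that $(m,n) - \frac{k}{a^2+b^2}(a,b)$ is orthogonal to the nonzero vector $(a,b)$ and hence a real multiple of $(-b, a)$, which rearranges $(x,y)$ into the required form.
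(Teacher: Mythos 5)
Your proof is correct, and your primary argument takes a genuinely different (and cleaner) route than the paper's. The paper asserts the inclusion of the right-hand side in $\ker(\chi_h)$ as easy, and for the reverse inclusion lifts everything to $\R^2$, writes an arbitrary preimage $(m,n)+(-bs,as)$ of a kernel element in the orthogonal basis $\lrset{(a,b),(-b,a)}$, and uses primitivity to see that the coefficient of $(a,b)$ is exactly $k/(a^2+b^2)$ with $k$ ranging over $\Z$ (this is where $ma+nb$ hitting every integer is needed). You instead read the right-hand side as $\bigcup_{k}\lrp{v_k + L}$ with $v_k=\tfrac{k}{a^2+b^2}(a,b)$ and $L=\ker(\chi_h)$ (by Lemma \ref{lemma:kernel of character second lemma}), check by a one-line character evaluation that each $v_k$ lies in $L$, and let the subgroup structure of $L$ collapse the union to $L$ itself; this handles both inclusions at once, stays entirely inside $(\R/\Z)^2$, and isolates the use of primitivity to the single invocation of Lemma \ref{lemma:kernel of character second lemma}. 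Your fallback argument via the two explicit inclusions is essentially the paper's proof, so nothing is lost if one prefers not to invoke the group structure. Both arguments are complete; yours arguably better explains \emph{why} the seemingly redundant parametrization by $k$ adds nothing here while becoming essential in Lemma \ref{lemma:kernel of character final lemma}, where the offsets $\tfrac{k}{n(a^2+b^2)}(a,b)$ are no longer all in the kernel of $\chi_{nh}$'s ``base'' line.
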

\begin{proof}
	It is easy to see that $\ker(\chi_h)$ contains the set described above.
	We show the containment in the other direction.
	By Lemma \ref{lemma:kernel of character second lemma} the points in $\R^2$ which map into $\ker(\chi_h)$ under the natural projection $\R^2\to (\R/\Z)^2$ are precisely the points of the form $(m, n) + (-bs, as)$, where $s\in \R$ is arbitrary.
	Now any point in $\R^2$ can be written as $\lambda(a, b) + t(-b, a)$ for some $\lambda, t\in \R$.
	For $(m, n) + (-bs, as)$ the corresponding $\lambda$ is $(ma+nb)/(a^2+b^2)$.
	Since $ma+nb$ can range over all integers as $m$ and $n$ range over the integers, we see that every point $(m, n) + (-bs, as)$ is of the form
	\begin{equation}
		\frac{k(a, b)}{a^2+b^2} + (-bt, at)
	\end{equation}
	for some integer $k$, and conversely.
	This proves the lemma.
\end{proof}

\begin{proof}[Proof of Lemma \ref{lemma:kernel of character final lemma}]
	Let $(x, y)$ be in $\ker(\chi_{nh})$.
	Then $(nx, ny)$ is in $\ker\chi_h$.
	The result now follows from Lemma \ref{lemma:kernel of character third lemma}.
\end{proof}

\bibliographystyle{alpha}
\bibliography{UBOP.bib}

\end{document}